\newtheorem{theorem}{Theorem}[section]
\newtheorem{corollary}[theorem]{Corollary}
\newtheorem{lemma}[theorem]{Lemma}
\theoremstyle{definition}
\newtheorem{definition}[theorem]{Definition}
\newtheorem{remark}[theorem]{Remark}
\newtheorem{example}[theorem]{Example}
\DeclareMathOperator{\im}{im}
\DeclareMathOperator{\rank}{rank}
\DeclareMathOperator{\tr}{tr}
\DeclareMathOperator{\Spec}{Spec}
\DeclareMathOperator{\Homeo}{Homeo}
\DeclareMathOperator{\Arg}{Arg}
\newcommand{\Reals}{\mathbbm{R}}
\newcommand{\Ints}{\mathbbm{Z}}
\newcommand{\CP}{\mathbbm{CP}}
\newcommand{\Complex}{\mathbbm{C}}
\DeclareMathOperator{\Aut}{Aut}
\DeclareMathOperator{\Hom}{Hom}
\DeclareMathOperator{\Conf}{Conf}
\DeclareMathOperator{\Stab}{Stab}
\DeclareMathOperator{\Ad}{Ad}
\newcommand{\qx}{{\bf{i}}}
\newcommand{\qy}{{\bf{j}}}
\newcommand{\qz}{{\bf{k}}}
\DeclareMathOperator{\MCG}{MCG}
\DeclareMathOperator{\Out}{Out}
\begin{document}

\title{Holonomy perturbations of the Chern-Simons functional for lens
  spaces}

\author{David Boozer} 

\date{\today}

\maketitle

\begin{abstract}
We describe a scheme for constructing generating sets for Kronheimer
and Mrowka's singular instanton knot homology for the case of knots in
lens spaces.
The scheme involves Heegaard-splitting a lens space containing a knot
into two solid tori.
One solid torus contains a portion of the knot consisting of an
unknotted arc, as well as holonomy perturbations of the
Chern-Simons functional used to define the homology theory.
The other solid torus contains the remainder of the knot.
The Heegaard splitting yields a pair of Lagrangians in
the traceless $SU(2)$-character variety of the twice-punctured torus,
and the intersection points of these Lagrangians constitute the
generating set that we seek.
We illustrate the scheme by constructing generating sets for several
example knots.
Our scheme is a direct generalization of a scheme introduced by
Hedden, Herald, and Kirk for describing generating sets for knots in
$S^3$ in terms of Lagrangian intersections in the traceless
$SU(2)$-character variety for the 2-sphere with four punctures.
\end{abstract}

\setcounter{tocdepth}{2} 
\tableofcontents

\section{Introduction}

Singular instanton homology was introduced by Kronheimer
and Mrowka to describe knots in 3-manifolds
\cite{Kronheimer-1,Kronheimer-2,Kronheimer-3}.
Singular instanton homology is defined in terms of gauge theory,
but has important implications for Khovanov homology, a knot homology
theory that categorifies the Jones polynomial and that can be defined
in a purely combinatorial fashion.
Specifically, given a knot $K$ in $S^3$, Kronheimer and Mrowka showed
that there is a spectral sequence whose $E_2$ page is the reduced
Khovanov homology of the mirror knot $\overline{K}$, and that
converges to the singular instanton homology of $K$.
Using this spectral sequence, Kronheimer and Mrowka proved a key
property of Khovanov homology: a knot in $S^3$ is the unknot if and
only if its reduced Khovanov homology has rank 1.
This result is obtained by proving the analogous result for singular
instanton homology and then using the rank inequality implied by the
spectral sequence.

Calculations of singular instanton homology are generally difficult
to carry out, though some results are known.
For example, Kronheimer and Mrowka showed that the singular instanton
homology of an alternating knot in $S^3$ is isomorphic to the reduced
Khovanov homology of its mirror (modulo grading), since for such knots
the spectral sequence collapses at the $E_2$ page.
Hedden, Herald, and Kirk have described a scheme for
producing generating sets for singular instanton homology for a
variety of knots in $S^3$, which can sometimes be used to compute the
singular instanton homology itself \cite{Hedden-1}.
Their scheme works as follows.

Singular instanton homology is defined in terms of the Morse
complex of a perturbed Chern-Simons functional.
The unperturbed Chern-Simons functional is typically not Morse, so to
obtain a well-defined homology theory it is necessary to include a
small perturbation term.
For the case of knots in $S^3$, Hedden, Herald, and Kirk show how a
suitable perturbation can be constructed.
Their scheme involves Heegaard-splitting $S^3$ into a pair of solid
3-balls $B_1$ and $B_2$.
The ball $B_1$ contains a portion of the knot $K$ consisting of two
unknotted arcs, together with a specific holonomy perturbation of the
Chern-Simons functional.
The ball $B_2$ contains the remainder of the knot.
The Heegaard splitting of $S^3$ yields a pair of Lagrangians $L_1^\pi$ and
$L_2$ in the traceless $SU(2)$-character variety of the 2-sphere with
four punctures $R(S^2,4)$, a symplectic orbifold known as the
\emph{pillowcase} that is homeomorphic to the 2-sphere.
Specifically, the Lagrangians $L_1^\pi$ and $L_2$ describe conjugacy
classes of $SU(2)$-representations of the fundamental group of the
2-sphere with four punctures that extend to $B_1 - K$ and $B_2 - K$,
respectively.
In many cases, the points of intersection of $L_1^\pi$ and $L_2$
constitute a generating set for singular instanton homology.
The essential idea of the scheme is to confine all of the
perturbation data to a standard ball $B_1$ corresponding to a perturbed
Lagrangian $L_1^\pi$ that can be described explicitly.
The problem of constructing a generating set for a particular knot
then reduces to describing the Lagrangian $L_2$, a task that is
facilitated by the fact that the Chern-Simons functional is
unperturbed on the ball $B_2$.
In further work, Hedden, Herald, and Kirk define \emph{pillowcase
  homology} to be the Lagrangian Floer homology of the pair
$(L_1^\pi,L_2)$ \cite{Hedden-2}.
They conjecture that pillowcase homology is isomorphic to singular
instanton homology, and compute some examples that support this
conjecture.

In the present paper we generalize the scheme of Hedden, Herald, and
Kirk to the case of knots in lens spaces.
We Heegaard-split a lens space $Y$ containing a knot $K$ into two
solid tori $U_1$ and $U_2$.
The solid torus $U_1$ contains a portion of the knot consisting of
an unknotted arc, together with a specific holonomy perturbation.
The solid torus $U_2$ contains the remainder of the knot.
From the Heegaard splitting of $Y$ we obtain a pair of Lagrangians
$L_1^\pi$ and $L_2$ in the traceless $SU(2)$-character variety of the
twice-punctured torus $R(T^2,2)$, and in many cases the points of
intersection of $L_1^\pi$ and $L_2$ constitute a generating set for the
reduced singular instanton homology $I^\natural(Y,K)$.

To explain the details of our scheme, we must first define several
character varieties that are related to the Chern-Simons functional.
Critical points of the unperturbed Chern-Simons functional are flat
connections.
Gauge-equivalence classes of flat connections correspond to
conjugacy classes of homomorphisms
$\rho:\pi_1(Y - K \cup H \cup W) \rightarrow SU(2)$, where $H$ is a
small loop around $K$ and $W$ is an arc connecting $K$ to $H$, as
shown in Figure \ref{fig:KHW}, and the homomorphisms are required to
take loops around $K$ and $H$ to traceless matrices and loops around
$W$ to $-1$.
The space of such conjugacy classes forms a character variety that we
will denote by $R^\natural(Y,K)$.
We will refer to $H \cup W$ as an \emph{earring} that has been added to
the knot $K$.
The conditions on $\rho$ involving the earring are imposed in order
to avoid reducible connections; such connections prevent us from
obtaining a chain complex for singular instanton homology, with a
differential that squares to zero.
It will also be useful to define a character variety $R(Y,K)$ in which
we do not impose these conditions, and which consists of
conjugacy classes of homomorphisms $\rho:\pi_1(Y-K) \rightarrow SU(2)$
that take loops around $K$ to traceless matrices.

\begin{figure}
  \centering
  \includegraphics[scale=0.7]{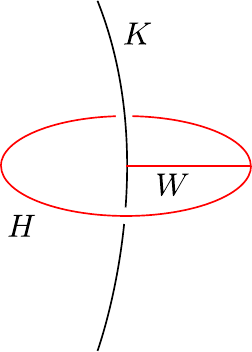}
  \caption{
    \label{fig:KHW}
    The knot $K$, loop $H$, and arc $W$.
  }
\end{figure}

The character variety $R^\natural(Y,K)$ is typically degenerate, in
which case the unperturbed Chern-Simons functional is not Morse.
We can render the Chern-Simons functional Morse by introducing
a suitable holonomy perturbation term that vanishes outside of
a small solid torus obtained by thickening a loop $P \subset Y$.
The net effect of the perturbation is to modify the corresponding
character variety: the critical points of the perturbed Chern-Simons
functional correspond to conjugacy classes of homomorphisms
$\rho:\pi_1(Y - K \cup H \cup W \cup P) \rightarrow SU(2)$, where
$\rho$ obeys the same conditions as for $R^\natural(Y,K)$ as well as
an additional condition involving the loop $P$ that we will describe
in Section \ref{sec:R-nat-pi}.
We will denote the character variety corresponding to the perturbed
Chern-Simons functional by $R_\pi^\natural(Y,K)$.

\begin{example}
For the trefoil $K$ in $S^3$, one can show that
\begin{align}
  \nonumber
  R(S^3,K) &= \{\textup{2 points}\}, &
  R^\natural(S^3,K) &= \{\textup{1 point}\} \amalg S^1, &
  R_\pi^\natural(S^3,K) &= \{\textup{3 points}\},
\end{align}
where the perturbation used to define $R_\pi^\natural(S^3,K)$ is as
described in Section \ref{sec:trefoil-s3}.
\end{example}

Our goal, then, is to devise an effective means of calculating
$R_\pi^\natural(Y,K)$.
We will view $(Y,K)$ as the result of gluing together two solid tori
$U_1 = S^1 \times D^2$ and $U_2 = S^1 \times D^2$.
The solid torus $U_1$ contains an unknotted arc $A_1$, the earring
$H \cup W$, and the holonomy perturbation loop $P$, as shown in
Figure \ref{fig:R-nat-pi}.
The solid torus $U_2$ contains a (possibly knotted) arc $A_2$.
We glue the two tori together via a homeomorphism
$\phi:(\partial U_1, \partial A_1) \rightarrow
(\partial U_2, \partial A_2)$ to obtain $(Y,K)$.

We define character varieties $R_\pi^\natural(U_1,A_1)$ and
$R(U_2,A_2)$ in analogy with $R_\pi^\natural(Y,K)$ and
$R(Y,K)$.
The character variety
$R_\pi^\natural(U_1,A_1)$ consists of conjugacy classes of homomorphisms
$\rho:\pi_1(U_1 - A_1 \cup H \cup W \cup P) \rightarrow SU(2)$ that
take loops around $A_1$ and $H$ to traceless matrices and loops around
$W$ to $-1$, and satisfy an additional requirement involving $P$ as
described in Section \ref{sec:R-nat-pi}.
The character variety $R(U_2,A_2)$ consists of conjugacy classes of
homomorphisms $\rho:\pi_1(U_2-A_2) \rightarrow SU(2)$ that
take loops around $A_2$ to traceless matrices.
We define a torus $T^2 := \partial U_1$ containing points
$\{p_1,p_2\} = \partial A_1$, and we define 
a corresponding character variety
$R(T^2,2)$ that consists of conjugacy classes of homomorphisms
$\rho:\pi_1(T^2 - \{p_1,p_2\}) \rightarrow SU(2)$ that take loops
around $p_1$ and $p_2$ to traceless matrices.

We define a map $R_\pi^\natural(U_1,A_1) \rightarrow R(T^2,2)$ by
pulling back along the inclusion
$(\partial U_1, \partial A_1) \hookrightarrow (U_1,A_1)$.
We define a map
$R(U_2,A_2) \rightarrow R(T^2,2)$
by pulling back along the composition of
$\phi:(\partial U_1, \partial A_1) \rightarrow
(\partial U_2, \partial A_2)$ with the inclusion
$(\partial U_2, \partial A_2) \hookrightarrow (U_2,A_2)$.
We similarly define maps
$R_\pi^\natural(Y,K) \rightarrow R_\pi^\natural(U_1,A_1)$ and
$R_\pi^\natural(Y,K) \rightarrow R(U_2,A_2)$ by pulling back along
inclusions.
We have a commutative diagram:
\begin{eqnarray}
  \label{diag:pullback}
  \begin{tikzcd}
    R_\pi^\natural(Y,K) \arrow{dr}{p} \arrow[bend left]{drr}
    \arrow[bend right]{ddr} & {} & {} \\
    {} & R_\pi^\natural(U_1,A_1) \times_{R(T^2, 2)} R(U_2,A_2) \arrow{d}
    \arrow{r} & R(U_2,A_2) \arrow{d} \\
    {} & R_\pi^\natural(U_1, A_1) \arrow{r} & R(T^2,2).
  \end{tikzcd}
\end{eqnarray}
Here $p$ is an induced map from $R_\pi^\natural(Y,K)$ to the fiber
product $R_\pi^\natural(U_1,A_1) \times_{R(T^2, 2)} R(U_2,A_2)$.
The character variety $R(T^2,2)$ is a symplectic orbifold that
generalizes the pillowcase, and the images of the maps
$R_\pi^\natural(U_1, A_1) \rightarrow R(T^2,2)$ and
$R(U_2,A_2) \rightarrow R(T^2,2)$ define Lagrangians $L_1^\pi$ and $L_2$
in $R(T^2,2)$.
We want to use diagram (\ref{diag:pullback}) to describe
$R_\pi^\natural(Y,K)$ in terms of the intersection points of these
Lagrangians.
Our first task is to obtain an explicit description of the character
variety $R(T^2,2)$.
We prove:

\begin{theorem}
\label{theorem:intro-1}
The character variety $R(T^2,2)$ contains a compact subset $P_3$ with
open, dense complement $P_4$.
The piece $P_3$ deformation retracts onto the pillowcase.
The piece $P_4$ is homeomorphic to $S^2 \times S^2 - \bar{\Delta}$,
where $\bar{\Delta} = \{(\hat{r},-\hat{r})\}$ is the antidiagonal.
(The pieces $P_3$ and $P_4$ are described explicitly in Theorems
\ref{theorem:space-P3} and \ref{theorem:space-P4}.)
\end{theorem}

\begin{remark}
The character variety $R(T^2,2)$ is
homeomorphic to the moduli space $M^{ss}(X,2)$ of semistable parabolic
bundles over an elliptic curve $X$, which is known to have the
structure of an algebraic variety isomorphic to
$\CP^1 \times \CP^1$ (see \cite{Boozer,Vargas}).
It follows that $R(T^2,2)$ is homeomorphic to $S^2 \times S^2$,
although this does not seem to be easy to show from our description of
this space.
We use the character variety $R(T^2,2)$, rather than the moduli space
$M^{ss}(X,2)$, since it is only for $R(T^2,2)$ that we can explicitly
describe the Lagrangians $L_1^\pi$ and $L_2$.
\end{remark}

Our next task is to explicitly describe the perturbed Lagrangian
$L_1^\pi$.
We prove:
\begin{theorem}
\label{theorem:intro-R-nat-pi}
The character variety $R_\pi^\natural(U_1, A_1)$ is homeomorphic to
$S^2$.
The map
$R_\pi^\natural(U_1, A_1) \rightarrow R(T^2,2)$
is an injective immersion away from the points of
$R_\pi^\natural(U_1, A_1)$ corresponding to the north and
south pole of $S^2$, which are mapped to the same point.
All representations in the image $L_1^\pi$ of the map are nonabelian.
(An explicit parameterization of $L_1^\pi$ is given
in Theorem \ref{theorem:L1}.)
\end{theorem}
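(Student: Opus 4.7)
The plan is to first compute $\pi_1(S^1 \times D^2 - A_1 \cup H \cup W \cup P)$ via a Wirtinger-style presentation. Since $A_1$ is unknotted, $H$ a small meridian of $A_1$, $W$ an arc joining them, and $P$ the specific loop shown in Figure \ref{fig:R-nat-pi}, the complement deformation retracts onto a graph, yielding a free group with explicit meridian generators $a, h, p$ for $A_1, H, P$ together with a longitudinal generator $\ell$ of the solid torus. I would then cut out the representation variety by imposing tracelessness on $\rho(a)$ and $\rho(h)$ (confining each to the conjugacy class of $\mathbf{i}$, a 2-sphere in $SU(2)$), the condition $\rho(w) = -1$ on the meridian $w$ of $W$, and the perturbation condition on $\rho(p)$ defined in Section \ref{sec:R-nat-pi}.

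Next I would fix a gauge to quotient by overall conjugation, for instance by conjugating so that $\rho(a) = \mathbf{i}$ and then partially fixing the residual $S^1$-freedom by placing $\rho(h)$ in a standard position relative to $\rho(a)$. This should cut the remaining parameters down to two real variables, and I would identify the resulting parameter space with $S^2$ by an explicit homeomorphism, checking compactness and the collapse of the residual $S^1$ freedom at the two loci where the stabilizer of the representation grows; these become the north and south poles.

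For the map to $R(T^2,2)$, I would restrict a representation to the boundary torus, whose punctured fundamental group is generated by the boundary meridian $\mu$ and longitude $\lambda$ of the solid torus together with the two puncture meridians inherited from $a$. Since $P$ lies in the interior of $U_1$, the boundary data depends on $\rho(p)$ only through the perturbation equation. I would then exhibit the north and south poles as a pair of representations whose boundary restrictions coincide, arising from an involutive symmetry of the perturbation equation that exchanges two solution branches while preserving the boundary holonomy; injectivity away from the poles would follow by solving uniquely for the interior data from the boundary restriction.

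The hardest step will be the perturbation analysis itself: the condition from Section \ref{sec:R-nat-pi} is transcendental, and the precise identification of $R^\natural_\pi(S^1 \times D^2, A_1)$ as $S^2$, rather than a disk or a more complicated surface, will rely on careful tracking of solution branches near the loci where the $SU(2)$-conjugation stabilizer jumps. The nonabelianness of every representation in $L_s$ should then follow essentially by construction, since the perturbation is designed precisely to exclude reducible representations by forcing a nontrivial commutator between $\rho(h)$ and the holonomy $\rho(p)$.
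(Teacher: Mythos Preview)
Your overall strategy---present $\pi_1$, gauge-fix by normalizing $\rho(a)$ and then $\rho(h)$, and extract a two-parameter family---matches the paper's approach. However, several of your expectations about the mechanism are off and would derail the computation.

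First, the perturbation condition is not a condition imposed on a meridian $\rho(p)$ of $P$. Removing $P$ from the solid torus has the effect of \emph{freeing up the meridian $B$ of the ambient solid torus}, which previously bounded a disk; the perturbation then imposes the transcendental relation between $\rho(\mu_P)=\rho(B)$ and $\rho(\lambda_P)=\rho(h^{-1}A)$ described in Section~\ref{sec:R-nat-pi}. In the paper's presentation the effective generators are $A,B,a,h$ (after eliminating $b$ and $w$), and the key relation $hwaB=aBh$ with $w=-1$ forces $h$ to anticommute with $aB$. If you work with a meridian of $P$ instead of $B$ you will have to rediscover this identification and the anticommutation relation, and without it the gauge-fixing does not close up to give $S^2$.

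Second, your explanation of the poles is incorrect. Every representation in $R_\pi^\natural(S^1\times D^2,A_1)$ is irreducible with stabilizer $\{\pm 1\}$; the stabilizer never grows. The poles $\phi=0,\pi$ are simply the loci where $\rho(\lambda_P)=\pm 1$, hence $\nu=0$ and $B=1$. There the boundary data $(A,B,a,b)$ reduces to $(\pm i\sigma_x,\,1,\,i\sigma_z,\,-i\sigma_z)$, and these two quadruples are conjugate in $SU(2)$ (by $i\sigma_z$) even though the full representations---which also remember $h=i\sigma_x$---are not. That, rather than a stabilizer jump or an involutive branch exchange, is why the two poles coalesce in $R(T^2,2)$.

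Third, nonabelianness is not a consequence of the perturbation; it is already guaranteed by the $\natural$ data. The relation forcing $h$ to anticommute with $aB$ (and in particular with $a$ when $B=1$) makes every representation irreducible regardless of $\epsilon$. The perturbation's role is to cut the $S^3$ of $R^\natural(S^1\times D^2,A_1)$ down to an $S^2$, not to kill reducibles.
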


Theorem \ref{theorem:intro-R-nat-pi} is closely analogous to a
corresponding result in the scheme of Hedden, Herald, and Kirk.
In their scheme, the perturbed Lagrangian analogous to our Lagrangian
$L_1^\pi$ is the image of $S^1$ under an injective immersion.
Their Lagrangian has single transverse double-point, and thus forms a
figure-8 in the pillowcase $R(S^2,4)$.
It is unclear whether it is possible, either in their scheme or ours,
to construct perturbed Lagrangians without double-points by choosing
different holonomy perturbations.
Theorem \ref{theorem:intro-R-nat-pi} also yields:

\begin{corollary}
\label{cor:glue}
The map $p: R_\pi^\natural(Y,K) \rightarrow
R_\pi^\natural(U_1,A_1) \times_{R(T^2, 2)} R(U_2,A_2)$
in diagram (\ref{diag:pullback}) is injective.
\end{corollary}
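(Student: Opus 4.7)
The plan is to unpack the definition of the fiber product and then exploit the nonabelian character of the sphere Lagrangian $L_s$ to promote a pair of local conjugations into a single global one.

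First I would use Seifert--van~Kampen to express
\[
  \pi_1(Y - K\cup H\cup W\cup P)
  \;\cong\;
  \pi_1(U_1 - A_1\cup H\cup W\cup P)\ast_{\pi_1(T^2-\{p_1,p_2\})}\pi_1(U_2-A_2),
\]
so that a homomorphism to $SU(2)$ satisfying the trace-free and holonomy conditions defining $R_\pi^\natural(Y,K)$ is the same as a pair of homomorphisms defining elements of $R_\pi^\natural(U_1,A_1)$ and $R(U_2,A_2)$ whose restrictions to $\pi_1(T^2-\{p_1,p_2\})$ literally coincide. (Here the $P$-condition lives entirely in $U_1$ since $P\subset U_1$, so it is only imposed on the $U_1$-factor.) Suppose now that $[\rho], [\rho']\in R_\pi^\natural(Y,K)$ have $p([\rho])=p([\rho'])$. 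By definition of the fiber product there exist $g_1,g_2\in SU(2)$ with
\[
  \rho|_{U_1} = g_1\,\rho'|_{U_1}\,g_1^{-1},\qquad
  \rho|_{U_2} = g_2\,\rho'|_{U_2}\,g_2^{-1}.
\]

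Restricting both equalities to $\pi_1(T^2-\{p_1,p_2\})$ gives
\[
  g_1\,\rho'|_{T^2}\,g_1^{-1} \;=\; g_2\,\rho'|_{T^2}\,g_2^{-1},
\]
so $h:=g_1^{-1}g_2$ centralizes the image of $\rho'|_{T^2}$. At this point I would invoke the key input from Theorem~\ref{theorem:intro-R-nat-pi}: the image $L_1=L_s$ of $R_\pi^\natural(U_1,A_1)$ in $R(T^2,2)$ consists entirely of nonabelian representations. Since the class of $\rho'|_{T^2}$ in $R(T^2,2)$ lies in $L_s$, the representation $\rho'|_{T^2}$ itself is nonabelian, so its centralizer in $SU(2)$ is exactly $\{\pm 1\}$. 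Therefore $h=\pm 1$ and $g_2=\pm g_1$, which (since $\pm 1$ is central) gives $\rho|_{U_2}=g_1\,\rho'|_{U_2}\,g_1^{-1}$ as well. Combining with the $U_1$ identity and the van Kampen description shows $\rho=g_1\rho' g_1^{-1}$ on all of $\pi_1(Y-K\cup H\cup W\cup P)$, hence $[\rho]=[\rho']$ in $R_\pi^\natural(Y,K)$.

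The main obstacle, and essentially the only nontrivial ingredient, is the nonabelian statement about $L_s$: without it, the conjugators $g_1$ and $g_2$ could differ by any element of a larger centralizer (for instance a circle, for abelian torus representations) and one would not be able to patch them into a single global conjugation. Everything else is bookkeeping from van Kampen and the fiber-product definition; in particular there is no subtlety from the holonomy perturbation because $P$ is confined to $U_1$ and hence only enters the $U_1$-factor of the amalgam.
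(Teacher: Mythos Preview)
Your proof is correct and is essentially the same argument as the paper's, just unpacked: the paper cites \cite{Hedden-1} Lemma~4.2 to identify the fiber $p^{-1}([\rho_1],[\rho_2])$ with the double coset space $\Stab(\rho_1)\backslash\Stab(\rho_{12})/\Stab(\rho_2)$, and then uses Theorem~\ref{theorem:intro-R-nat-pi} (nonabelian image in $L_s$) to force $\Stab(\rho_{12})=Z(SU(2))$ and hence a one-point fiber. Your van~Kampen-and-centralizer computation is exactly the content of that cited lemma in this situation, and you invoke the same nonabelian input at the same step.
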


\begin{proof}
Consider a point $([\rho_1],[\rho_2])$ in
$R_\pi^\natural(U_1,A_1) \times_{R(T^2, 2)} R(U_2,A_2)$, so $\rho_1$
and $\rho_2$ pull back to the same homomorphism
$\rho_{12}:\pi_1(T^2-\{p_1,p_2\}) \rightarrow SU(2)$.
One can show (see \cite{Hedden-1} Lemma 4.2) that the fiber
$p^{-1}([\rho_1],[\rho_2])$ is homeomorphic to the double coset space
$\Stab(\rho_1)\backslash\Stab(\rho_{12})/\Stab(\rho_2)$, where
\begin{align}
  \nonumber
  \Stab(\rho) = \{g \in SU(2) \mid
  \textup{$g\rho(x)g^{-1} = \rho(x)$ for all $x$ in the domain of
    $\rho$}\}.
\end{align}
The center of $SU(2)$ is $Z(SU(2)) = \{\pm 1\}$.
By Theorem \ref{theorem:intro-R-nat-pi} we have that
$\Stab(\rho_{12}) = Z(SU(2))$, and
$Z(SU(2)) \subseteq \Stab(\rho_i) \subseteq \Stab(\rho_{12})$, so
$\Stab(\rho_i) = \Stab(\rho_{12}) = Z(SU(2))$ and thus the fibers of
$p$ are points.
\end{proof}

By introducing a suitable holonomy perturbation, we obtain a finite
character variety $R_\pi^\natural(Y,K)$, each point of which
corresponds to a gauge-orbit of connections that are critical points
of the perturbed Chern-Simons functional.
In order for $R_\pi^\natural(Y,K)$ to serve as a generating set for
singular instanton homology, each point in $R_\pi^\natural(Y,K)$ must
be nondegenerate; that is, at each connection representing a point in
$R_\pi^\natural(Y,K)$ we want the Hessian of the perturbed
Chern-Simons functional to be nondegenerate when restricted to a
complement of the tangent space to the gauge-orbit of that connection.
We show that there is a simple criterion for determining when
a point $R_\pi^\natural(Y,K)$ is nondegenerate.
Recall that we defined the Lagrangian $L_2$ to be the image of
$R(U_2,A_2) \rightarrow R(T^2,2)$.
If $R(U_2,A_2) \rightarrow R(T^2,2)$ is injective and
$[\rho] \in L_1^\pi \cap L_2 \subset R(T^2,2)$ is not the double-point of
$L_1^\pi$, then by Corollary \ref{cor:glue} the point $[\rho]$ is the
image of a unique point
$[\tilde{\rho}] \in R_\pi^\natural(Y,K)$ under the pullback map
$R_\pi^\natural(Y,K) \rightarrow R(T^2,2)$.
We prove:

\begin{theorem}
\label{theorem:intro-transverse}
Suppose $R(U_2,A_2) \rightarrow R(T^2,2)$ is an injective
immersion and $[\rho] \in L_1^\pi \cap L_2$
is the image of a regular point of $R(U_2,A_2)$ and is not the
double-point of $L_1^\pi$.
Then the unique preimage $[\tilde{\rho}]$ of $[\rho]$ under the
pullback map $R_\pi^\natural(Y,K) \rightarrow R(T^2,2)$ is
nondegenerate if and only if the intersection of $L_1^\pi$ with $L_2$ at
$[\rho]$ is transverse.
\end{theorem}

Collecting these results, we find if the hypotheses of
Theorem \ref{theorem:intro-transverse} are satisfied for every
point in $L_1^\pi \cap L_2$, then every point in
$R_\pi^\natural(Y,K)$ is nondegenerate and the pullback map
$R_\pi^\natural(Y,K) \rightarrow R(T^2,2)$ is injective with image
$L_1^\pi \cap L_2$.
Thus we obtain:

\begin{corollary}
If the hypotheses of
Theorem \ref{theorem:intro-transverse} are satisfied for every
point in $L_1^\pi \cap L_2$, then $R_\pi^\natural(Y,K)$ is a generating
set for $I^\natural(Y,K)$ consisting of $|L_1^\pi \cap L_2|$
generators.
\end{corollary}

Our scheme is particularly well-suited for the case of $(1,1)$-knots.
By definition, a $(1,1)$-knot is a knot $K$ in a lens space $Y$ that
has a Heegaard splitting into a pair of solid tori $U_1,U_2 \subset Y$
such that the components $U_1 \cap K$ and $U_2 \cap K$ of the knot in
each solid torus are unknotted arcs.
It is known that $(1,1)$-knots include all torus knots and 2-bridge
knots.

We can construct $(1,1)$-knots by taking $(U_2,A_2)$ to be a
copy of $(U_1,A_1)$ without the earring $H \cup W$ or the
perturbation loop $P$, and we can explicitly describe the
corresponding Lagrangian $L_2$ as follows.
We first define a character variety $R(U_1,A_1)$ that consists of
conjugacy classes of homomorphisms $\pi_1(U_1-A_1) \rightarrow SU(2)$
that take loops around $A_1$ to traceless matrices.
We define a map $R(U_1,A_1) \rightarrow R(T^2,2)$ by pulling back
along the inclusion
$(\partial U_1, \partial A_1) \hookrightarrow (U_1,A_1)$.
The image of this map defines a Lagrangian $L_1$ in $R(T^2,2)$.
We can view $R(U_1,A_1)$ and $L_1$ as ``unperturbed'' versions of
$R_\pi^\natural(U_1,A_1)$ and $L_1^\pi$.
Since
$(T^2,\{p_1,p_2\}) := (\partial U_1, \partial A_1)$ and
there is a natural identification
$(\partial U_2, \partial A_2) \xrightarrow{\sim} (T^2,\{p_1,p_2\})$,
the gluing map
$\phi:(\partial U_1, \partial A_1) \rightarrow
(\partial U_2, \partial A_2)$
defines an element $[\phi]$ of the mapping class group
$\MCG_2(T^2)$ of the twice-punctured torus.
The group $\MCG_2(T^2)$ acts on $R(T^2,2)$ from the right in
a way that we explicitly describe in Section \ref{sec:mcg}, and
the Lagrangian $L_2$, which we defined to be the image of
the map $R(U_2,A_2) \rightarrow R(T^2,2)$, is given by
$L_2 = L_1 \cdot [\phi]$.
We prove results that explicitly describe the character variety
$R(U_1,A_1)$ and the Lagrangian $L_1$:

\begin{theorem}
\label{theorem:intro-R}
The character variety $R(U_1, A_1)$ is homeomorphic to the closed disk
$D^2$.
The map $R(U_1,A_1) \rightarrow R(T^2,2)$ is injective and is an
immersion on the interior of $R(U_1,A_1)$.
(An explicit parameterization of the image $L_1$ of the map is given
in Theorem \ref{theorem:Ld}.)
\end{theorem}

\begin{theorem}
\label{theorem:intro-reg-R}
The character variety $R(U_1, A_1)$ is regular on its interior.
\end{theorem}

From Theorems \ref{theorem:intro-R} and \ref{theorem:intro-reg-R}, we
obtain a Corollary to Theorem \ref{theorem:intro-transverse} for the
special case of $(1,1)$-knots.
Recall that for a $(1,1)$ knot we can describe the Lagrangian
$L_2$ as the image $L_2 = L_1 \cdot [\phi]$ of $L_1$ under the action
of the mapping class group element $[\phi] \in \MCG_2(T^2)$, where
$\phi:(\partial U_1, \partial A_1) \rightarrow
(\partial U_2, \partial A_2)$ is the gluing map.
We then have:

\begin{corollary}
\label{cor:1-1-knot}
For a $(1,1)$-knot $K$, if $L_1^\pi$ intersects
$L_2 = L_1 \cdot [\phi]$
transversely away from the double-point of $L_1^\pi$, then
$R_\pi^\natural(Y,K)$ is a generating set for $I^\natural(Y,K)$
consisting of $|L_1^\pi \cap L_2|$ generators.
\end{corollary}

Since we have explicit descriptions of the character variety
$R(T^2,2)$, the Lagrangians $L_1^\pi$ and
$L_1$, and the action of the mapping class group $\MCG_2(T^2)$ on
$R(T^2,2)$, Corollary \ref{cor:1-1-knot} provides us with a practical
scheme for calculating generating sets for $I^\natural(Y,K)$ for any
$(1,1)$-knot $K$ in any lens space $Y$.
In Section \ref{sec:examples} we illustrate this scheme by
calculating generating sets for several example $(1,1)$-knots.
We first rederive known results for knots in $S^3$: we produce
generating sets for the unknot (one generator) and trefoil (three
generators), which allow us to compute the singular instanton homology
for these knots.
Next we consider knots in lens spaces $L(p,1)$.
We prove:

\begin{theorem}
\label{theorem:intro-unknot}
If $p$ is not a multiple of $4$, then the rank of
$I^\natural(L(p,1),U)$ for the unknot $U$ in the lens space $L(p,1)$
is at most $p$.
\end{theorem}

A knot $K$ in a lens space $L(p,q)$ is said to be \emph{simple} if the
lens space can be Heegaard-split into solid tori $U_1$ and $U_2$
with meridian disks $D_1$ and $D_2$ such that $D_1$ intersects $D_2$
in $p$ points and $K \cap U_i$ is an unknotted arc in disk $D_i$ for
$i=1,2$ (see \cite{Hedden-0}).
Up to isotopy, there is exactly one simple knot in each nonzero
homology class of $H_1(L(p,q);\Ints) = \Ints_p$.
We prove:

\begin{theorem}
\label{theorem:intro-simple}
If $K$ is the unique simple knot representing the homology class
$1 \in \Ints_p = H_1(L(p,1);\Ints)$ of the lens space $L(p,1)$, then
the rank of $I^\natural(L(p,1),K)$ is at most $p$.
\end{theorem}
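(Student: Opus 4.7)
The plan is to apply the general framework developed in this paper---Theorems \ref{theorem:intro-R}, \ref{theorem:intro-R-nat-pi}, and \ref{theorem:intro-transverse}, together with Corollary \ref{cor:glue}---to the specific case of the simple knot $K$ representing the class $1 \in \Ints_p$ in $L(p,1)$. Since $K$ is simple, by definition it admits a Heegaard splitting $L(p,1) = U_1 \cup U_2$ in which both $A_1 = U_1 \cap K$ and $A_2 = U_2 \cap K$ are unknotted arcs; in particular, $K$ is a $(1,1)$-knot, so by Theorem \ref{theorem:intro-R} the map $R(U_2, A_2) \to R(T^2, 2)$ is an embedding with image $L_2 = L_d \cdot f$, where $f = [\partial\phi_2^{-1} \circ \partial\phi_1] \in \MCG_2(T^2)$, and by Theorem \ref{theorem:intro-R-nat-pi} the Lagrangian $L_1 = L_s$ is the sphere Lagrangian. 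The strategy is to show that $|L_s \cap L_2| \le p$ and that every intersection point satisfies the hypotheses of Theorem \ref{theorem:intro-transverse}; combined with Corollary \ref{cor:glue}, this produces a nondegenerate generating set for singular instanton homology with at most $p$ elements, bounding the rank.

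First I would identify the mapping class $f$ explicitly. The Heegaard gluing of $L(p,1)$ is determined by a standard element of $SL(2, \Ints)$, and the configuration of the two unknotted arcs for the simple knot in class $1$ is uniquely specified up to isotopy by the condition that it represent the generator $1 \in \Ints_p$; this fixes how the two meridian disks $D_1, D_2$ (whose intersection consists of $p$ points) are arranged, and therefore fixes $f$ as a specific product of Dehn twists along standard curves in the twice-punctured torus. The action of these Dehn twists on $R(T^2,2)$ is provided by Section \ref{sec:mcg}.

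Next, using the explicit parameterizations of $L_s$ and $L_d$ from Theorems \ref{theorem:space-R}, \ref{theorem:R-nat-pi-immersion-1}, and \ref{theorem:R-nat-pi-immersion-2}, I would transport $L_d$ by $f$ and compute the intersection of $L_s$ with $L_d \cdot f$ directly. The factor of $p$ in the bound is expected to arise because the $L(p,1)$ gluing wraps one of the standard coordinates parameterizing $L_d$ around the torus $p$ times, so its image meets the sphere Lagrangian in a number of points proportional to $p$. Away from the single double point of $L_s$, the sphere Lagrangian is smoothly immersed and $L_d \cdot f$ is smoothly embedded, so I would verify that the intersections are transverse and avoid this double point. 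Theorem \ref{theorem:intro-transverse} then certifies that every such intersection point corresponds to a nondegenerate point of $R_\pi^\natural(L(p,1), K)$, Corollary \ref{cor:glue} gives injectivity of $R_\pi^\natural(L(p,1), K) \to L_s \cap L_2$, and the resulting finite nondegenerate critical set serves as a generating set for the Morse complex defining singular instanton homology, so the rank is bounded above by $|L_s \cap L_2| \le p$.

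The main obstacle will be the explicit intersection count after applying the multi-twist $f$. The sphere Lagrangian $L_s$ has a nontrivial geometric structure as a sphere with one pair of points identified, while $L_d \cdot f$ is a disk whose boundary winds intricately through $R(T^2, 2)$ because of the $p$-fold twisting in the $L(p,1)$ gluing. Careful bookkeeping using the coordinates on $P_4$ and $P_3$ from Theorem \ref{theorem:intro-1} will be required both to confirm the upper bound of $p$ and to check that no intersection point lands at the double point of $L_s$, so that Theorem \ref{theorem:intro-transverse} applies uniformly over $L_s \cap L_2$.
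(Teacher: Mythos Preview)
Your outline is correct and follows essentially the same route as the paper. Two specifics worth noting: the paper identifies the gluing element explicitly as $f = \alpha_1^{-1} T_a^p \in \MCG_2(T^2)$, and rather than computing $L_s \cap (L_d\cdot f)$ directly in the $P_4$/$P_3$ coordinates, it first uses the trace functions $h_1 = -\tr(Aa)/\tr(Ab)$, $h_2 = \tr(Ba)$, $h_3 = \tr(B)$ (in the spirit of Remark~\ref{remark:tr-Ls}) to pin down the candidate intersection parameters $(\chi,\psi) = ((n+\tfrac12)\pi/p,\,(-1)^{n+1}(\pi/2-\epsilon))$, $\phi = \pi/2$, for $n=0,\dots,p-1$, and only then verifies each is a genuine transverse intersection in $P_4$. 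This trace-function shortcut both localizes the count to exactly $p$ points and reduces the transversality check to the nonvanishing of a handful of partial derivatives of the $h_i$.
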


To our knowledge, Theorems \ref{theorem:intro-unknot} and
\ref{theorem:intro-simple} give the first rank bounds on instanton
homology for knots in 3-manifolds other than $S^3$.
For a simple knot $K$ in the lens space $Y = L(p,q)$, the knot Floer
homology $\widehat{HFK}(Y,K)$ has rank $p$ (see \cite{Hedden-0}).
Thus, Theorem \ref{theorem:intro-simple} is consistent with Kronheimer
and Mrowka's conjecture that for a knot $K$ in a 3-manifold $Y$, the
ranks of $I^\natural(Y,K)$ and
$\widehat{HFK}(Y,K)$ are the same (see \cite{Kronheimer-0} Section
7.9).
There is a combinatorial method for computing the Floer
homology for arbitrary $(1,1)$-knots in $S^3$ (see \cite{Goda}), and
it would be interesting to test Kronheimer and Mrowka's conjecture by
using our results to derive rank bounds for the singular instanton
homology of such knots.

As a more ambitious application of our results, we hope to generalize
Khovanov homology to links in lens spaces.
In recent work \cite{Hedden-3}, Hedden, Herald, Hogancamp, and Kirk
show that Bar Natan's functor from the tangle cobordism category for
2-tangles in the 3-ball to chain complexes can be factored through the
twisted Fukaya category for the pillowcase $R(S^2,4)$.
We conjecture that an analogous factorization result holds for the
twisted Fukaya category of $R(T^2,2)$; if so, the corresponding
functor would provide a natural way of generalizing Khovanov homology
to links in lens spaces.
Using the results presented here, we can already describe some
of the relevant structure of the Fukaya category of $R(T^2,2)$.
The information we have obtained regarding this Fukaya
category suggests that the factorization result of \cite{Hedden-3}
does indeed generalize, but we have not proven this.
Based on clues provided by this information, we have shown that the
reduced Khovanov homology of a link in $S^3$ can be recovered from a
chain complex constructed from a description of the link in terms of a
1-tangle in the annulus \cite{Boozer-2}.
We hope that further investigation of the Fukaya category of $R(T^2,2)$
will enable us to construct a homology theory for links in lens spaces
that generalizes Khovanov homology.

\section{The group $SU(2)$}

Here we briefly review some basic facts about the group $SU(2)$.
We define $SU(2)$-matrices $\qx$, $\qy$, and $\qz$ by
\begin{align}
  \nonumber  
  \qx &= -i\sigma_x, &
  \qy &= -i\sigma_y, &
  \qz &= -i\sigma_z,
\end{align}
where
$\sigma_x$, $\sigma_y$, and $\sigma_z$ are the Pauli spin matrices:
\begin{align}
  \nonumber  
  \sigma_x &=
  \left(\begin{array}{cc}
    0 & 1 \\
    1 & 0
  \end{array}\right), &
  \sigma_y &=
  \left(\begin{array}{cc}
    0 & -i \\
    i & 0
  \end{array}\right), &
  \sigma_z &=
  \left(\begin{array}{cc}
    1 & 0 \\
    0 & -1
  \end{array}\right).
\end{align}
The matrices $\qx$, $\qy$, and $\qz$ satisfy the quaternion
multiplication laws $\qx^2 = \qy^2 = \qz^2 = \qx \qy \qz = -1$.
Any $SU(2)$-matrix $A$ can be uniquely expressed as
\begin{align}
  \nonumber  
  A = t + x\,\qx + y\,\qy + z\,\qz,
\end{align}
where
$(t,x,y,z) \in S^3 =
\{(t,x,y,z) \in \Reals^4 \mid t^2 + x^2 + y^2 + z^2 = 1\}$,
and thus we may identify $SU(2)$ with the space of unit quaternions.
We will refer to $t$ and $x\,\qx + y\,\qy + z\,\qz$ as the
\emph{scalar} and \emph{vector}
parts of the matrix $A$, respectively.
Note that $\tr(A) = 2t$, so traceless $SU(2)$-matrices are precisely
those for which the scalar part is zero.
It follows that traceless $SU(2)$-matrices are parameterized by unit
vectors in $\Reals^3$, and we will frequently pass back and forth
between traceless matrices
$a = a_x\,\qx + a_y\,\qy + a_z\,\qz \in SU(2)$ and their corresponding
unit vectors $\hat{a} = (a_x,a_y,a_z) \in S^2$.

We can define a surjective group homomorphism
$SU(2) \rightarrow SO(3)$ by $g \mapsto (\hat{v} \mapsto \hat{v}')$,
where the unit vectors $\hat{v} = (v_x, v_y, v_z)$ and
$\hat{v}' = (v_x', v_y', v_z')$ are related by
\begin{align}
  \nonumber  
  g(v_x\,\qx + v_y\,\qy + v_z\,\qz)g^{-1} =
  v_x'\,\qx + v_y'\,\qy + v_z'\,\qz.
\end{align}
In general, conjugating an arbitrary $SU(2)$-matrix preserves the
scalar part of the matrix and rotates the vector part of the matrix:
\begin{align}
  \nonumber  
  g(t + r_x\,\qx + r_y\,\qy + r_z\,\qz)g^{-1} =
  t + r_x'\,\qx + r_y'\,\qy + r_z'\,\qz,
\end{align}
where $(r_x',r_y',r_z')$ is given by multiplying
$(r_x,r_y,r_z)$ by the $SO(3)$-matrix corresponding to $g \in SU(2)$.
We will thus sometimes describe conjugation in terms of the
corresponding rotation performed on the vector part of an
$SU(2)$-matrix.
We use the conjugation action to prove the following result:

\begin{lemma}
\label{lemma:product-surj}
Any $SU(2)$-matrix can be expressed as a product of two traceless
$SU(2)$-matrices.
\end{lemma}

\begin{proof}
Define a product map $SU(2) \times SU(2) \rightarrow SU(2)$,
$(a,b) \mapsto ab$.
If we restrict the product map to traceless $SU(2)$-matrices,
represent each traceless $SU(2)$-matrix as a point in $S^2$, and
represent their product as a point in $S^3$, we obtain a map
$S^2 \times S^2 \rightarrow S^3$ given by
\begin{align*}
  (\hat{a},\,\hat{b}) \mapsto
  (-\hat{a}\cdot \hat{b},\, \hat{a} \times \hat{b}).
\end{align*}
This map is surjective, since
\begin{align*}
  (\hat{x},\,\hat{x}\cos\theta + \hat{y}\sin\theta) \mapsto
  (-\cos\theta,\, \hat{z}\sin\theta)
\end{align*}
and the product map is equivariant under conjugation.
\end{proof}

\section{Character varieties}
\label{sec:character-varieties}

\subsection{The character variety $R(T^2,2)$}
\label{sec:R2}

Our first task is to understand the structure of $R(T^2,2)$, the
traceless $SU(2)$-character variety of the twice-punctured torus.
In general, we make the following definition:

\begin{definition}
Given a surface $S$ with $n$ distinct marked points
$p_1, \cdots, p_n \in S$, we define the character variety $R(S,n)$ to
be the space of conjugacy classes of homomorphisms
$\rho:\pi_1(S - \{p_1, \cdots, p_n\}) \rightarrow SU(2)$ that take
loops around the marked points to traceless $SU(2)$-matrices.
\end{definition}

Before examining the space $R(T^2,2)$, we first consider the simpler
space $R(T^2) := R(T^2,0)$, which is known as the \emph{pillowcase}.
We have the following well-known result:

\begin{theorem}
\label{theorem:pillowcase}
The pillowcase $R(T^2)$ is homeomorphic to $S^2$.
\end{theorem}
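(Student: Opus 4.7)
The plan is to reduce the problem to the standard fact that the quotient of $T^2$ by the hyperelliptic involution is $S^2$. Since $R(T^2)$ has no punctures and no traceless condition, it consists simply of conjugacy classes of homomorphisms $\rho:\pi_1(T^2)=\Ints^2 \to SU(2)$, equivalently pairs $(A,B)$ of commuting elements of $SU(2)$ up to simultaneous conjugation.

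First I would observe that any two commuting elements of $SU(2)$ lie in a common maximal torus. Indeed, if $A$ is not central then its centralizer is the unique maximal torus containing it, so any $B$ commuting with $A$ lies in that torus; and if $A$ is central then any torus through $B$ suffices. Fix once and for all the standard maximal torus $T=\{\exp(i\theta\sigma_z/2)\mid \theta\in\Reals\}$. Conjugating, I may assume $\rho(m)=\exp(i\alpha\sigma_z/2)$ and $\rho(\ell)=\exp(i\beta\sigma_z/2)$ for generators $m,\ell$ of $\pi_1(T^2)$, so every conjugacy class is represented by a point $(\alpha,\beta)\in (\Reals/2\pi\Ints)^2$.

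Next I would determine the residual conjugation action. Two representations into $T$ are $SU(2)$-conjugate iff they are conjugate by the normalizer $N(T)$; since $N(T)/T\cong\Ints/2$ is generated by $j=\left(\begin{smallmatrix}0&-1\\ 1&0\end{smallmatrix}\right)$, and $j\exp(i\theta\sigma_z/2)j^{-1}=\exp(-i\theta\sigma_z/2)$, the residual action on $(\Reals/2\pi\Ints)^2$ is the simultaneous inversion $(\alpha,\beta)\mapsto(-\alpha,-\beta)$. Hence $R(T^2)$ is canonically homeomorphic to $T^2/(\Ints/2)$, where $\Ints/2$ acts by this hyperelliptic involution.

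Finally I would identify this quotient with $S^2$. The involution has exactly four fixed points, namely $(\alpha,\beta)\in\{0,\pi\}^2$, and the fundamental domain $[0,\pi]\times[0,2\pi]$ acquires the identifications $(\alpha,0)\sim(\alpha,0)$, $(\alpha,2\pi)\sim(\alpha,2\pi)$ (trivially, after modding out by $2\pi$), together with $(0,\beta)\sim(0,-\beta)$ and $(\pi,\beta)\sim(\pi,-\beta)$ on the two vertical boundary edges. Folding each vertical edge along its midpoint therefore collapses the rectangle to a topological $2$-sphere with four cone points — the classical pillowcase. The main (and only non-routine) obstacle is verifying that this identification really is a homeomorphism to $S^2$; this can be done explicitly by exhibiting the quotient map $T^2\to S^2$ as the projection obtained from the Weierstrass embedding of an elliptic curve, or more elementarily by writing down a continuous bijection from the identified rectangle onto $S^2$ and invoking compactness.
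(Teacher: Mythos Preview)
Your argument is correct and is essentially the same as the paper's: both conjugate a commuting pair into the diagonal maximal torus, identify the residual conjugation as the Weyl-group involution $(\alpha,\beta)\mapsto(-\alpha,-\beta)$, and recognize the resulting quotient $T^2/(\Ints/2)$ as the pillowcase $S^2$. One small slip: with your convention $\exp(i\alpha\sigma_z/2)$ the period in $\alpha$ is $4\pi$, not $2\pi$, so either drop the factor of $1/2$ (as the paper does, writing $\cos\alpha+i\sigma_z\sin\alpha=e^{i\alpha\sigma_z}$) or adjust the lattice accordingly.
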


\begin{proof}
The fundamental group of $T^2$ is
$\pi_1(T^2) = \langle A, B \mid ABA^{-1}B^{-1} = 1\rangle$,
where $A$ and $B$ are represented by the two fundamental cycles.
A homomorphism $\rho:\pi_1(T^2) \rightarrow SU(2)$ is uniquely
determined by the pair of matrices $(\rho(A),\rho(B))$, which for
simplicity we will also denote by $(A,B)$.
Since $A$ and $B$ commute, any conjugacy class $[\rho] \in R(T^2)$ has
a representative $(A,B)$ of the form
\begin{align}
  \nonumber
  A &= \cos \alpha + \sin\alpha\,\qz, &
  B &= \cos \beta + \sin\beta\,\qz
\end{align}
for some angles $\alpha$ and $\beta$.
These equations are invariant under the
replacements $\alpha \rightarrow \alpha + 2\pi$ and
$\beta \rightarrow \beta + 2\pi$, and we can simultaneously flip the
signs of $\alpha$ and $\beta$ by conjugating by $\qx$, so we
obtain the following identifications:
\begin{align}
  \nonumber
  (\alpha,\beta) \sim (\alpha+2\pi,\beta), &&
  (\alpha,\beta) \sim (\alpha,\beta+2\pi), &&
  (\alpha,\beta) \sim (-\alpha,-\beta).
\end{align}
We can thus restrict to a fundamental domain in which
$(\alpha,\beta) \in [0,2\pi] \times [0,\pi]$, with edges identified as
shown in Figure \ref{fig:pillowcase}.
From Figure \ref{fig:pillowcase} it is clear that this space is
homeomorphic to $S^2$.
\end{proof}

\begin{figure}[t]
  \centering
  \includegraphics[scale=0.7]{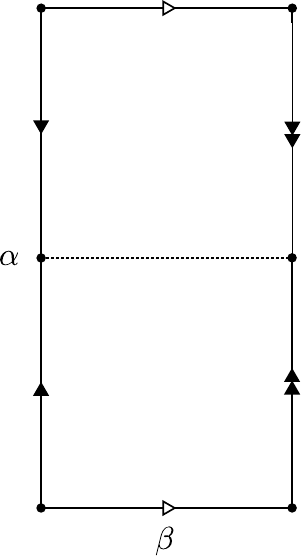}
  \caption{
    \label{fig:pillowcase}
    The pillowcase $R(T^2)$.
    The black dots indicate the four pillowcase points.
  }
\end{figure}

We will refer to the four points
$[A,B] = [\pm 1, \pm 1] \in R(T^2)$ as \emph{pillowcase points}.
One can show that the character variety $R(S^2,4)$ that is used in the
work of Hedden, Herald, and Kirk is also described
by a rectangle with edges identified as shown in
Figure \ref{fig:pillowcase} (see, for example, \cite{Hedden-1} Section
3.1), so both $R(T^2)$ and $R(S^2,4)$ are referred to as the
\emph{pillowcase}.

We now consider the space $R(T^2,2)$.
The fundamental group of the twice-punctured torus is
\begin{align}
  \nonumber
  \pi_1(T^2 - \{p_1, p_2\}) =
  \langle A, B, a, b \mid ABA^{-1}B^{-1}ab = 1 \rangle,
\end{align}
where $p_1$ and $p_2$ denote the puncture points, $A$ and $B$ denote
the fundamental cycles of the torus, and $a$ and $b$ denote loops
around the punctures $p_1$ and $p_2$, as shown in Figure
\ref{fig:pi1-t2}.
As above, we will use the same notation for generators of the
fundamental group and their images under $\rho$; for example, we
denote $\rho(A)$ by $A$.
A homomorphism
$\rho:\pi_1(T^2 - \{p_1, p_2\}) \rightarrow SU(2)$ is thus specified
by $SU(2)$-matrices $(A,B,a,b)$ such that $a$ and $b$ are traceless
and $ABA^{-1}B^{-1}ab = 1$,
and we will sometimes denote a homomorphism $\rho$ by the
corresponding list of matrices $(A,B,a,b)$.

\begin{figure}[t]
  \centering
  \includegraphics[scale=0.7]{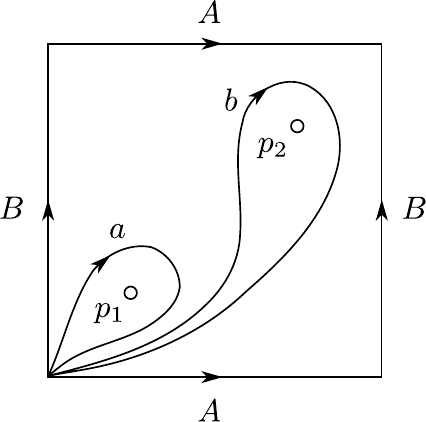}
  \caption{
    \label{fig:pi1-t2}
    Cycles corresponding to the generators $A$, $B$, $a$, $b$ of the
    fundamental group $\pi_1(T^2 - \{p_1,p_2\})$.
  }
\end{figure}

The structure of $R(T^2,2)$ can be understood by considering
the fibers of the map
$\mu:R(T^2,2) \rightarrow [-1,1]$ defined by
\begin{align*}
  \mu([A,B,a,b]) = (1/2) \tr (ABA^{-1}B^{-1}) = (1/2)\tr ((ab)^{-1}).
\end{align*}
In particular, it is convenient to decompose $R(T^2,2)$ into
the disjoint union of an open piece $P_4 = \mu^{-1}([-1,1))$ and a
closed piece $P_3 = \mu^{-1}(1)$.
The notation for these pieces is motivated by the fact that, as we
will see, the piece $P_4$ is four-dimensional and the piece $P_3$ is
three-dimensional.
We will describe the topology of the pieces $P_3$ and $P_4$ and
define coordinate systems on each piece that are useful for performing
calculations.

\subsubsection{The piece $P_4 \subset R(T^2,2)$}
\label{sssec:P4}

We define the piece $P_4 \subset R(T^2,2)$ to be the set of conjugacy
classes $[\rho] \in R(T^2,2)$ such that $\mu([\rho]) \in [-1,1)$.
For any representative $(A,B,a,b)$ of a given conjugacy class
$[\rho] \in P_4$, the matrices $A$ and $B$ do not commute.
This fact can be used to choose a canonical representative of each
conjugacy class in $P_4$:

\begin{lemma}
Any conjugacy class $[\rho] \in P_4$ has a unique representative
$(A,B,a,b)$ for which
\begin{align}
  \label{eqn:AB}
  A &=
  r\cos\alpha + \sqrt{1 - r^2}\,\qx + r\sin\alpha\,\qz, &
  B &= \cos\beta + \sin\beta\,\qz,
\end{align}
where $\alpha \in [0,2\pi)$, $\beta \in (0,\pi)$, and $r \in [0,1)$.
\end{lemma}

\begin{proof}
Since $[\rho] \in P_4$, for any representative of $[\rho]$
the matrices $A$ and $B$ do not commute.
Given an arbitrary representative, first
conjugate so that the coefficients of $\qx$ and $\qy$ in
$B$ are zero and the coefficient of $\qz$ is positive, and
then rotate about the $z$-axis so the coefficient of $\qy$ in
$A$ is zero and the coefficient of $\qx$ in $A$ is positive.
The restrictions on the ranges of $\beta$ and $r$ follow from the fact
that the matrices $A$ and $B$ do not commute.
The uniqueness of the representative follows from the fact that
the coefficients of $\qx$ in $A$ and $\qz$ in $B$ are nonzero.
\end{proof}

We can use the canonical representatives of conjugacy classes in $P_4$
to define maps
$q_1:P_4 \rightarrow SU(2)\times SU(2)$ and
$q_2:P_4 \rightarrow S^2 \times S^2$:
\begin{align}
  \nonumber
  q_1([\rho]) &= (A,B), &
  q_2([\rho]) &= (\hat{a}, \hat{b}),
\end{align}
where
$(A,B,a,b)$ is the canonical representative of
$[\rho]$, and $\hat{a} = (a_x, a_y, a_z)$ and
$\hat{b} = (b_x, b_y, b_z)$ are the unit vectors corresponding to the
traceless matrices $a$ and $b$:
\begin{align}
  \nonumber
  a &= a_x\,\qx + a_y\,\qy + a_z\,\qz, &
  b &= b_x\,\qx + b_y\,\qy + b_z\,\qz.
\end{align}
Note that we cannot extend the maps $q_1$ and $q_2$ to all of
$R(T^2,2)$, since our choice of canonical representative relies on the
fact that the matrices $A$ and $B$ do not commute.

To describe the piece $P_4$, we will
show that the map $q_2:P_4 \rightarrow S^2 \times S^2$ is
injective and identify its image.
This requires two Lemmas that describe the image of
$q_1:P_4 \rightarrow SU(2) \times SU(2)$ on the
fibers of $\mu:P_4 \rightarrow [-1,1)$:

\begin{lemma}
\label{lemma:p1-iso-pt}
The space $q_1(\mu^{-1}(-1))$ consists of the single point
$(\qx,\,\qz)$.
\end{lemma}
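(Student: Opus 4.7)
The plan is a direct computation using the canonical parameterization (\ref{eqn:AB}) of representatives of points in $P_4$, combined with the constraint (\ref{eqn:t-r}) that relates $t = \mu([\rho])$ to $r$ and $\beta$. Given $[\rho] \in \mu^{-1}(-1)$, I take the unique representative $\rho$ whose matrices $A$, $B$ are in the standard form (\ref{eqn:AB}) with $\alpha \in [0, 2\pi]$, $\beta \in (0, \pi)$, $r \in [0, 1)$. Setting $t = -1$ in (\ref{eqn:t-r}) gives
\begin{align}
  -1 = r^2 + (1 - r^2)\cos 2\beta,
\end{align}
which I rewrite as $(1 - r^2)(1 + \cos 2\beta) = 0$.

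Since the canonical form requires $r \in [0, 1)$, the factor $1 - r^2$ is strictly positive, so $1 + \cos 2\beta = 0$, forcing $\beta = \pi/2$. Feeding $\beta = \pi/2$ back into (\ref{eqn:r}) (or directly into the previous equation) then gives $r = 0$. Substituting $r = 0$ and $\beta = \pi/2$ into (\ref{eqn:AB}), the $r\cos\alpha$ and $r\sin\alpha$ terms vanish, the coefficient of $i\sigma_x$ in $A$ becomes $\sqrt{1 - r^2} = 1$, and $B$ collapses to $i\sigma_z$. Thus $(A, B) = (i\sigma_x, i\sigma_z)$, independent of the choice of $\alpha$, so $p_1([\rho]) = (i\sigma_x, i\sigma_z)$ for every $[\rho] \in \mu^{-1}(-1)$.

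There is no substantive obstacle here, but one mild subtlety worth noting is that at $r = 0$ the parameter $\alpha$ becomes indeterminate: different values of $\alpha$ produce the same matrices $A$ and $B$. This is harmless for the present lemma, since the conclusion concerns only the image $(A, B)$ of $p_1$. It does, however, foreshadow that the fiber $\mu^{-1}(-1)$ is lower-dimensional than a typical fiber of $\mu$, consistent with the eventual identification of $P_4$ with $S^2 \times S^2 - \Delta$ and with $p_1$ collapsing degenerate data on the boundary $t = -1$.
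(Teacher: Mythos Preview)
Your approach is essentially the same as the paper's: both arguments use the constraint relating $t$, $r$, and $\beta$ to force $\beta=\pi/2$ and $r=0$, then substitute back into the standard form (\ref{eqn:AB}). The paper phrases this via $\beta_0 = (1/2)\cos^{-1}(-1) = \pi/2$ and the range $\beta\in[\beta_0,\pi-\beta_0]$, while you work directly from (\ref{eqn:t-r}); these are equivalent.

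There is, however, a small algebraic slip in your rewriting. From $-1 = r^2 + (1-r^2)\cos 2\beta$ one does \emph{not} get $(1-r^2)(1+\cos 2\beta)=0$; the correct identity is
\[
(1-r^2)(1+\cos 2\beta) \;=\; (1-r^2) + (1-r^2)\cos 2\beta \;=\; (1-r^2) + (-1-r^2) \;=\; -2r^2.
\]
Your conclusion still follows: the left-hand side is nonnegative (since $r\in[0,1)$ and $1+\cos 2\beta\ge 0$) while the right-hand side is nonpositive, so both sides vanish, forcing $r=0$ and $\beta=\pi/2$. Equivalently, one can rewrite the constraint as $(1-r^2)\sin^2\beta = 1$, and since each factor lies in $(0,1]$ the product is $1$ only when both equal $1$. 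Either fix repairs the argument with no change to the rest of your proof.
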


\begin{proof}
Consider a point $[\rho] \in \mu^{-1}(-1)$.
From equation (\ref{eqn:AB}) for the canonical representative
$(A,B,a,b)$ of $[\rho]$, we find that
\begin{align}
  \nonumber
  \mu([\rho]) = -1 = (1/2)\tr(ABA^{-1}B^{-1}) =
  \cos 2\beta + r^2(1- \cos 2\beta).
\end{align}
Thus $r=0$ and $\beta = \pi/2$.
Substituting these values into equation (\ref{eqn:AB}), we obtain the
desired result.
\end{proof}

\begin{lemma}
\label{lemma:p1-iso-s2}
For $t \in (-1,1)$ we can define a map
$q_1(\mu^{-1}(t)) \rightarrow S^2$,
$(A,B) \mapsto \hat{v}$, where the unit vector
$\hat{v} = (v_x, v_y, v_z)$ is the
direction of the vector part of $ABA^{-1}B^{-1}$:
\begin{align}
  \nonumber
  ABA^{-1}B^{-1} =
  t + \sqrt{1-t^2}\,(v_x\,\qx + v_y\,\qy + v_z\,\qz).
\end{align}
This map is a homeomorphism.
\end{lemma}

\begin{proof}
Consider a point $[\rho] \in \mu^{-1}(t)$ for $t \in (-1,1)$.
From equation (\ref{eqn:AB}) for the canonical representative
$(A,B,a,b)$ of $[\rho]$, we find that
\begin{align}
  \label{eqn:t-r}
  \mu([\rho]) = t = (1/2)\tr(ABA^{-1}B^{-1}) =
  \cos 2\beta + r^2(1- \cos 2\beta).
\end{align}
We solve equation (\ref{eqn:t-r}) for $r$ to obtain
\begin{align}
  \label{eqn:r}
  r = \left(\frac{t - \cos2\beta}{1 - \cos 2\beta}\right)^{1/2}.
\end{align}
From equation (\ref{eqn:r}), we see that for a fixed value of
$t \in (-1,1)$ the parameter $\beta$ must lie in the range
$[\beta_0,\pi - \beta_0]$, where we have defined
\begin{align}
  \label{eqn:phi0}
  \beta_0 := (1/2)\cos^{-1} t \in (0,\pi/2).
\end{align}
Using equations (\ref{eqn:AB}), (\ref{eqn:r}), and (\ref{eqn:phi0}),
we find that the matrices $A$ and $B$ can be expressed as
\begin{align}
  \label{eqn:AB-t}
  A &=
  \left(\frac{\cos2\beta_0 - \cos2\beta}{1 - \cos2\beta}\right)^{1/2}
  (\cos\alpha + \sin\alpha\,\qz) +
   \left(\frac{1-\cos2\beta_0}{1 - \cos2\beta}\right)^{1/2}\qx, &
  B &= \cos\beta + \sin\beta\,\qz,
\end{align}
where $(\alpha,\beta) \in [0,2\pi)\times [\beta_0,\pi-\beta_0]$.
Define a space
\begin{align}
  \nonumber
  X = \{(\alpha, \beta) \in
  [0,2\pi] \times
  [\beta_0,\pi-\beta_0]
  \}/{\sim},
\end{align}
where the equivalence relation $\sim$ is defined such that the bottom
edge of the rectangle $[0,2\pi] \times [\beta_0,\pi-\beta_0]$ is
collapsed to a point, the top edge is collapsed to a point, and the
left and right edges are identified:
\begin{align}
  \nonumber
  (\alpha,\beta_0) \sim (0,\beta_0), &&
  (\alpha,\pi-\beta_0) \sim (0,\pi-\beta_0), &&
  (0,\beta) \sim (2\pi,\beta).
\end{align}
We note that $\mu^{-1}(t)$ is nonempty for all $t \in (-1,1)$, since
Lemma \ref{lemma:product-surj} shows that we can always find traceless
$SU(2)$-matrices $a$ and $b$ such that $ABA^{-1}B^{-1}ab = 1$.
Define a map
$X \rightarrow q_1(\mu^{-1}(t))$,
$[\alpha,\beta] \mapsto (A,B)$, where
$A$ and $B$ are given by equation (\ref{eqn:AB-t}).
From equation (\ref{eqn:AB-t}), it is clear that this map is
well-defined and is a homeomorphism.

Using equations (\ref{eqn:AB}) and (\ref{eqn:r}), a calculation shows
that
\begin{align}
  \nonumber
  ABA^{-1}B^{-1} =
  t + \sqrt{1-t^2}\,(v_x\,\qx + v_y\,\qy + v_z\,\qz),
\end{align}
where the unit vector $\hat{v} = (v_x, v_y, v_z) \in S^2$ is given by
\begin{align}
  \label{eqn:vhat}
  \hat{v} =
  (\sqrt{1-z(\beta)^2}\,\sin(\alpha+\beta),\,
  -\sqrt{1-z(\beta)^2}\,\cos(\alpha+\beta),\,
  z(\beta))
\end{align}
and we have defined a diffeomorphism
 $z:[\beta_0,\pi-\beta_0] \rightarrow [-1,1]$ by
\begin{align}
  \nonumber
  z(\beta) = -\sqrt{\frac{1-t}{1+t}} \cot\beta.
\end{align}
Define a map
$X \rightarrow S^2$,
$[\alpha,\beta] \mapsto \hat{v}$,
where $\hat{v}$ is given by equation (\ref{eqn:vhat}).
From equation (\ref{eqn:vhat}), it is clear that this map is
well-defined and is a homeomorphism.
Composing the inverse of the map $X \rightarrow q_1(\mu^{-1}(t))$ with
the map $X \rightarrow S^2$, we obtain the desired result.
\end{proof}

We can now describe the topology of the piece $P_4$:

\begin{theorem}
\label{theorem:space-P4}
The space $P_4$ is homeomorphic to
$S^2 \times S^2 - \bar{\Delta}$, where
$\bar{\Delta} = \{(\hat{r},-\hat{r}) \}$ is the antidiagonal.
All representations in $P_4$ are nonabelian.
\end{theorem}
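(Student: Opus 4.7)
The plan is to exhibit $p_2$ as a homeomorphism onto $S^2 \times S^2 - \Delta$, with nonabelianness dropping out as a byproduct of the defining inequality $\mu < 1$.

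First I would compute $\mu$ in terms of $(\hat{a},\hat{b})$. Using the Pauli identity $(\hat{u}\cdot\vec\sigma)(\hat{w}\cdot\vec\sigma) = (\hat{u}\cdot\hat{w})I + i(\hat{u}\times\hat{w})\cdot\vec\sigma$ on $a = i\hat{a}\cdot\vec\sigma$ and $b = i\hat{b}\cdot\vec\sigma$ yields $ab = -(\hat{a}\cdot\hat{b})I - i(\hat{a}\times\hat{b})\cdot\vec\sigma$, whence $\mu([\rho]) = (1/2)\tr((ab)^{-1}) = (1/2)\tr(ab) = -\hat{a}\cdot\hat{b}$. The constraint $\mu < 1$ cutting out $P_4$ is thus equivalent to $\hat{a}\cdot\hat{b} > -1$, i.e., $(\hat{a},-\hat{b}) \notin \Delta$, so $p_2$ takes values in $S^2 \times S^2 - \Delta$. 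Moreover, an abelian $\rho$ would have $[A,B] = I$ and hence $\mu = 1$, impossible on $P_4$; so every representation in $P_4$ is nonabelian.

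Next I would construct an explicit inverse $\Psi: S^2 \times S^2 - \Delta \to P_4$. Given $(\hat{a},\hat{b}') \notin \Delta$, set $\hat{b} := -\hat{b}'$, $a := i\hat{a}\cdot\vec\sigma$, $b := i\hat{b}\cdot\vec\sigma$, and $t := -\hat{a}\cdot\hat{b} \in [-1,1)$; we want $A,B$ satisfying $[A,B] = (ab)^{-1}$. If $t = -1$, Lemma \ref{lemma:p1-iso-pt} forces $(A,B) = (i\sigma_x,i\sigma_z)$, and a direct check gives $[A,B] = -I = (ab)^{-1}$. If $t \in (-1,1)$, the axis of $[A,B]$ must be $\hat{v} := (\hat{a}\times\hat{b})/\sqrt{1-t^2} \in S^2$, and Lemma \ref{lemma:p1-iso-s2} produces the unique normalized pair $(A,B)$ of the form (\ref{eqn:AB-t}) realizing this axis and half-trace $t$. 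Setting $\Psi(\hat{a},\hat{b}') = [\rho]$ for the representation $\rho$ determined by $(A,B,a,b)$, the identities $p_2 \circ \Psi = \mathrm{id}$ and $\Psi \circ p_2 = \mathrm{id}$ follow from unpacking definitions.

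The main obstacle is continuity. Continuity of $p_2$ is immediate from the continuous dependence of the normalized representative on $[\rho]$, and continuity of $\Psi$ away from the antidiagonal $\{(\hat{a},-\hat{a})\} \subset S^2 \times S^2 - \Delta$ (the locus $t = -1$) follows from the explicit formulas. Across the antidiagonal the formula $\hat{v} = (\hat{a}\times\hat{b})/\sqrt{1-t^2}$ is indeterminate in the limit, which is the delicate point. I would resolve this by observing that as $t \to -1$ the allowed range $[\beta_0,\pi-\beta_0]$ collapses to $\{\pi/2\}$ and $r = ((t-\cos 2\beta)/(1-\cos 2\beta))^{1/2} \to 0$; the $\alpha$- and $\beta$-dependent contributions to $A$ in (\ref{eqn:AB-t}) therefore vanish uniformly, so $(A,B) \to (i\sigma_x,i\sigma_z)$ regardless of the direction of approach, matching the value prescribed by Lemma \ref{lemma:p1-iso-pt}. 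Combined with the manifest continuity of $a$ and $b$ in $(\hat{a},\hat{b}')$, this yields continuity of $\Psi$ on all of $S^2 \times S^2 - \Delta$ and completes the proof.
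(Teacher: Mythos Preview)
Your proof is correct and follows essentially the same approach as the paper's: show $p_2$ lands in $S^2\times S^2-\Delta$ via the computation $\mu=-\hat a\cdot\hat b$, then build the inverse case-by-case using Lemmas~\ref{lemma:p1-iso-pt} and~\ref{lemma:p1-iso-s2}. Your treatment of continuity across the antidiagonal $t=-1$ is more explicit than the paper's, which leaves that check implicit.
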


\begin{proof}
Consider the map
$q_2:P_4 \rightarrow S^2 \times S^2$.
Clearly the image of $q_2$ lies in
$S^2 \times S^2 - \bar{\Delta}$, since if
$q_2([\rho]) \in \bar{\Delta}$ then $b = a^{-1}$, which implies that
$\mu([\rho]) = (1/2)\tr((ab)^{-1}) = 1$ and hence
$[\rho] \notin P_4$.
We can define an inverse map
$S^2 \times S^2 - \bar{\Delta} \rightarrow P_4$ as follows.
Given a point $(\hat{a}, \hat{b}) \in S^2 \times S^2 - \bar{\Delta}$,
define traceless matrices
$a = a_x\,\qx + a_y\,\qy + a_z\,\qz$ and
$b = b_x\,\qx + b_y\,\qy + b_z\,\qz$ corresponding to
$\hat{a} = (a_x, a_y, a_z)$ and $\hat{b} = (b_x, b_y, b_z)$.
Then
\begin{align}
  \nonumber
  ab =
  t - v_x\,\qx - v_y\,\qy - v_z\,\qz,
\end{align}
where
$t := -\hat{a}\cdot\hat{b}$ and
$\vec{v} = (v_x, v_y, v_z) := -\hat{a} \times \hat{b}$.
If $t=-1$ then map $(\hat{a},\hat{b})$ to
$[\qx,\qz, a, b]$,
otherwise map $(\hat{a},\hat{b})$ to
$[A,B,a,b]$, where $A$ and $B$ are determined from $t$ and
$\hat{v} := \vec{v}/|\vec{v}| \in S^2$ via the homeomorphism
$q_1(\mu^{-1}(t)) \rightarrow S^2$ defined in Lemma
\ref{lemma:p1-iso-s2}.
By Lemmas \ref{lemma:p1-iso-pt} and \ref{lemma:p1-iso-s2}, this
inverse map is well-defined.
The fact that all representations in $P_4$ are nonabelian
is clear from the definition of the space $P_4$.
\end{proof}

Our main application of Theorem \ref{theorem:space-P4} will be to use
$(\hat{a},\hat{b})$ as coordinates on the piece $P_4$.

\subsubsection{The piece $P_3 \subset R(T^2,2)$}
\label{sssec:P3}

We define the piece $P_3 \subset R(T^2,2)$ to be the set of conjugacy
classes $[\rho] \in R(T^2,2)$ such that $\mu([\rho]) = 1$.
For any representative $(A,B,a,b)$ of a given conjugacy class
$[\rho] \in P_3$, the matrices $A$ and $B$ commute.
We can therefore define a map $q:P_3 \rightarrow R(T^2)$ by
\begin{align*}
  q([A,B,a,b]) = [A,B].
\end{align*}
We will describe the topology of the piece $P_3$ by considering the
fibers of the map $q$.
In particular, we will show that $P_3$ is homeomorphic to the
following space:

\begin{definition}
\label{def:space-Y}
We define a space $Y$ by
\begin{align}
  \nonumber
  Y = \{(\alpha,\beta,z) \mid
  \alpha \in [0,2\pi],\,
  \beta \in [0,\pi],\,
  |z| \leq \sin^2\alpha + \sin^2\beta\}/{\sim},
\end{align}
where the equivalence relation $\sim$ is defined such that
\begin{align}
  \nonumber
  (\alpha,0,z) \sim (2\pi-\alpha,0,-z), &&
  (\alpha,\pi,z) \sim (2\pi-\alpha,\pi,-z), &&
  (0,\beta,z) \sim (2\pi,\beta,z).
\end{align}
\end{definition}

The space $Y$ is depicted in Figure \ref{fig:space-Y}.
We can think of $Y$ as the result of taking the trivial $I$-bundle
over the pillowcase $R(T^2)$ and collapsing the fibers over the four
pillowcase points.

\begin{figure}[t]
  \centering
  \includegraphics[scale=0.7]{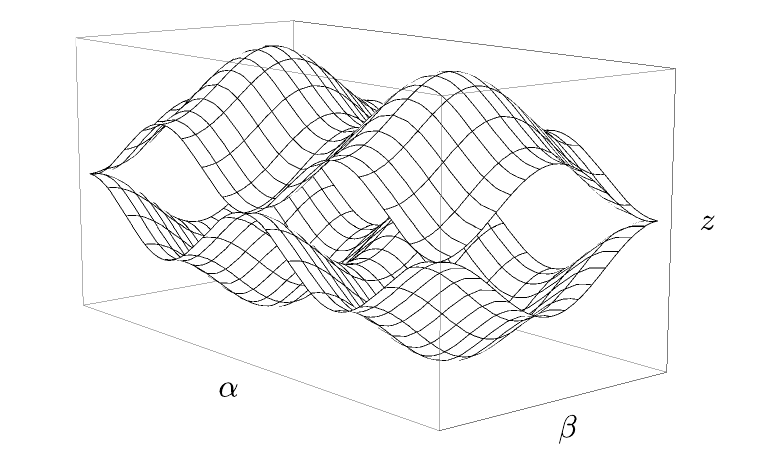}
  \caption{
    \label{fig:space-Y}
    The space $Y$, which is homeomorphic to the piece $P_3$, is
    the region between the pair of surfaces.
    The vertical faces are identified as described in Definition
    \ref{def:space-Y}.
  }
\end{figure}

\begin{theorem}
\label{theorem:space-P3}
The space $P_3$ is homeomorphic to $Y$.
Representations on the boundary of $P_3$ are abelian, and
representations on the interior of $P_3$ are nonabelian.
\end{theorem}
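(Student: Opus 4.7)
The plan is to parameterize $P_3$ by hand and then shoehorn the parameter space into the explicit model $Y$. The condition $\mu([\rho]) = 1$ forces $[A,B] = (ab)^{-1} = 1$, so $A$ and $B$ commute and $b = a^{-1} = -a$ (using $a^2 = -1$ for any traceless unit $a \in SU(2)$). My first step is to conjugate to a standard form $A = \cos\alpha + i\sigma_z\sin\alpha$, $B = \cos\beta + i\sigma_z\sin\beta$, $a = i\hat{a}\cdot\vec{\sigma}$ with $\hat{a} \in S^2$. This is possible because whenever at least one of $A,B$ is noncentral they lie in a unique common maximal torus, which I rotate onto the $\sigma_z$-axis; when both are central, $A, B \in \{\pm I\}$ and the form is automatic.

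Next I identify the residual conjugation group that preserves this standard form. Whenever $\sin\alpha$ or $\sin\beta$ is nonzero, only the normalizer $O(2)$ of the diagonal torus survives: the subgroup $U(1) = \{e^{i\theta\sigma_z}\}$ fixes $(\alpha,\beta)$ and rotates $\hat{a}$ about the $z$-axis, while conjugation by $i\sigma_x$ sends $(\alpha, \beta, \hat{a}) \mapsto (-\alpha, -\beta, (a_x, -a_y, -a_z))$. Quotienting by $U(1)$ reduces $\hat{a}$ to its $z$-coordinate $a_z \in [-1,1]$, and the further $i\sigma_x$-quotient imposes $(\alpha, \beta, a_z) \sim (-\alpha, -\beta, -a_z)$. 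Restricting to the pillowcase fundamental domain $\alpha \in [0,2\pi]$, $\beta \in [0,\pi]$ then recovers precisely the three edge identifications listed in Definition~\ref{def:space-Y}. At the four pillowcase points $(\alpha,\beta) \in \{0,\pi\}^2$, however, both $A,B$ become central, the residual group jumps to all of $SU(2)$, and the $\hat{a}$-fiber over such a point collapses to a single class.

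To absorb this collapse into a uniform parameterization, I rescale by $z := (\sin^2\alpha + \sin^2\beta)\,a_z$, so the fiber range $|z| \leq \sin^2\alpha + \sin^2\beta$ is forced to $0$ exactly at the four vertices, matching the shape of $Y$. The induced map $P_3 \to Y$ is then a continuous bijection that automatically carries the edge identifications of Definition~\ref{def:space-Y}. Continuity of the inverse at the pillowcase vertices is the one step that deserves care: any sequence $(\alpha_n, \beta_n, z_n) \to (\text{vertex}, 0)$ in $Y$ lifts to a sequence in $P_3$ converging to the vertex class regardless of the bounded ratio $a_{n,z} = z_n/(\sin^2\alpha_n + \sin^2\beta_n)$, because the $P_3$-fiber over each vertex is a single point that absorbs any such choice.

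Finally, the image of $\rho$ is an abelian subgroup of $SU(2)$ iff $a$ lies in a common maximal torus with $A$ and $B$. Away from the pillowcase vertices that torus is unique, and the tracelessness of $a$ forces $a = \pm i\sigma_z$, equivalently $a_z = \pm 1$, equivalently $|z| = \sin^2\alpha + \sin^2\beta$ -- the top and bottom faces of $Y$. At each vertex, $A, B \in \{\pm I\}$, so every $\rho$ is abelian, and the collapsed fiber $z = 0 = \sin^2\alpha + \sin^2\beta$ is again on $\partial Y$. Hence abelian representations correspond to the boundary of $Y$ and nonabelian ones to its interior. The main subtlety throughout is the stabilizer jump at the pillowcase points, where the naive $[-1,1]$-fiber over the pillowcase degenerates; the rescaling $a_z \mapsto (\sin^2\alpha + \sin^2\beta)\,a_z$ is precisely the device that turns this dimension-jumping object into a bona fide homeomorphism with $Y$.
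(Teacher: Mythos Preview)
Your proof is correct and follows essentially the same strategy as the paper: conjugate so that $A,B$ lie on the $\sigma_z$-torus, reduce the remaining data for $a$ to a single height parameter via the residual $U(1)$-action, and then rescale that height by $\sin^2\alpha+\sin^2\beta$ to absorb the fiber collapse at the four pillowcase points into the shape of $Y$. The only cosmetic difference is that the paper records the height of $\hat a$ as an angle $\gamma\in[-\pi/2,\pi/2]$ with $a=i\sigma_x\cos\gamma+i\sigma_z\sin\gamma$ and rescales linearly in $\gamma$, whereas you use $a_z=\sin\gamma$ directly; your version is actually slightly more explicit about the stabilizer jump and the continuity check at the vertices.
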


\begin{proof}
We first determine the fibers of the map
$q:P_3 \rightarrow R(T^2)$.
Given a conjugacy class $[\rho] \in P_3$, we can always choose a
representative of the form
\begin{align}
  \label{eqn:param-mu1}
  A &= \cos\alpha + \sin\alpha\,\qz, &
  B &= \cos\beta + \sin\beta\,\qz, &
  a &= \cos\gamma\,\qx + \sin\gamma\,\qz, &
  b &= a^{-1},
\end{align}
where $(\alpha,\beta) \in [0,2\pi] \times [0,\pi]$ and
$\gamma \in [-\pi/2,\pi/2]$.
For $(A,B) = (\pm 1, \pm 1)$, we can conjugate so as to force
$\gamma=0$.
From these considerations it follows that the fibers of $q$ are
points ($\gamma = 0$) over the four pillowcase points
$[A,B] = [\pm 1, \pm 1]$,
and intervals ($\gamma \in [-\pi/2,\pi/2]$) over all other points.
We can thus define a homeomorphism $P_3 \rightarrow Y$ by
\begin{align}
  \nonumber
  (\alpha,\, \beta,\, \gamma) \mapsto
  (\alpha,\, \beta,\, (2\gamma/\pi)(\sin^2\alpha + \sin^2\beta)),
\end{align}
where $(\alpha,\beta) \in [0,2\pi]\times [0,\pi]$, and
$\gamma \in [-\pi/2,\pi/2]$ are chosen such that equations
(\ref{eqn:param-mu1}) are satisfied.
The statement regarding abelian and nonabelian representations is
clear from  equation (\ref{eqn:param-mu1}).
\end{proof}

Our main application of Theorem \ref{theorem:space-P3} will be to
use $(\alpha,\beta,\gamma)$ as coordinates on
$P_3$, subject to the identifications
\begin{align}
  \nonumber
  (\alpha,\beta,\gamma) \sim (\alpha + 2\pi,\beta, \gamma), &&
  (\alpha,\beta,\gamma) \sim (\alpha,\beta + 2\pi,\gamma), &&
  (\alpha,\beta,\gamma) \sim (-\alpha,-\beta,-\gamma),
\end{align}
and if
$(\alpha,\beta) \in \{(0,0),\, (0,\pi),\, (\pi,0),\, (\pi,\pi)\}$,
corresponding to the four pillowcase points of $R(T^2)$, then
$(\alpha,\beta,\gamma) \sim (\alpha,\beta,0)$.
Note that $P_3$ deformation retracts onto $P_3 \cap \{\gamma = 0\}$,
which may be identified with the pillowcase $R(T^2)$.
Theorems \ref{theorem:space-P4} and \ref{theorem:space-P3} imply
Theorem \ref{theorem:intro-1} from the Introduction.

\begin{remark}
The character variety $R(T^2,2)$ is smooth away from the reducible
locus $\partial P_3$.
We note that $\partial P_3$ is homeomorphic to $T^2$; a specific
homeomorphism $T^2 \rightarrow \partial P_3$ is given by
$(\alpha,\beta) \mapsto [A,B,a,b]$, where
\begin{align}
  \nonumber
  A &= \cos\alpha + \sin\alpha\,\qz, &
  B &= \cos\beta + \sin\beta\,\qz, &
  a &= b^{-1} = \qz.
\end{align}
\end{remark}

\subsection{The unperturbed character variety $R(U_1, A_1)$ and
  Lagrangian $L_1 \subset R(T^2,2)$}
\label{sec:R}

Our next task is to determine the Lagrangian $L_1$ in $R(T^2,2)$ that
corresponds to a solid torus $U_1 = S^1 \times D^2$ containing an
unknotted arc $A_1$ connecting distinct points
$p_1, p_2 \in \partial U_1$.
We first define and describe a character variety $R(U_1,A_1)$ for
$(U_1,A_1)$.
The Lagrangian $L_1$ is then given by the image of a pullback
map $R(U_1,A_1) \rightarrow R(T^2,2)$.

\begin{definition}
\label{def:R}
We define the \emph{unperturbed} character variety $R(U_1, A_1)$ to be
the space of conjugacy classes of homomorphisms
$\rho:\pi_1(U_1 - A_1) \rightarrow SU(2)$ that map
loops around the arc $A_1$ to traceless matrices.
\end{definition}

\begin{theorem}
\label{theorem:space-R}
The space $R(U_1, A_1)$ is homeomorphic to the closed
unit disk $D^2$.
Representations on the boundary of $R(U_1,A_1)$ are abelian, and
representations on the interior of $R(U_1,A_1)$ are nonabelian.
\end{theorem}

\begin{proof}
The fundamental group of $U_1 - A_1$ is given by
\begin{align}
  \nonumber
  \pi_1(U_1 - A_1) =
  \langle A, B, a, b \mid B=1,\,b = a^{-1} \rangle,
\end{align}
where $A$ and $B$ are the longitude and meridian of the boundary of
the solid torus and $a$ and $b$ are loops in the boundary encircling
the points $p_1$ and $p_2$, respectively.

We now consider homomorphisms
$\rho:\pi_1(U_1 - A_1) \rightarrow SU(2)$
that satisfy the requirements described in
Definition \ref{def:R} for $R(U_1, A_1)$.
As usual, we use the same notation for generators of the fundamental
group and their images under $\rho$; for example, we denote $\rho(A)$
by $A$.
Given an arbitrary representative of a conjugacy class
$[\rho] \in R(U_1, A_1)$, we will argue that we can always
conjugate so as to obtain a representative of the form
\begin{align}
  \label{eqn:R-abAB}
  A &= \cos\chi + \sin\chi\,\qz, &
  B &= 1, &
  a &= b^{-1} = \cos\psi\,\qx + \sin\psi\,\qz,
\end{align}
where $(\chi,\psi) \in [0,\pi] \times [-\pi/2,\pi/2]$.
We first conjugate so the coefficients of $\qx$ and $\qy$ in $A$ are
zero and the coefficient of $\qz$ is nonnegative, and then rotate
about the $z$-axis so the coefficient of $\qy$ in $a$ is zero and the
coefficient of $\qx$ is nonnegative.
We have thus obtained a representative of the form given in equation
(\ref{eqn:R-abAB}).
If $\chi \in (0,\pi)$, then it is clear from these equations that the
representative is unique.
If $\chi \in \{0,\pi\}$ then $A=\pm 1$ and we can conjugate so that
$a = b^{-1} = \qx$, so we obtain the identifications
$(0,\psi) \sim (0,0)$ and $(\pi,\psi) \sim (\pi,0)$.
It follows that $R(U_1, A_1)$ is homeomorphic to the square
$[0,\pi] \times [-\pi/2,\pi/2]$ with the left and right edges each
collapsed to a point, and this space is homeomorphic to the closed
disk $D^2$.
The statement regarding abelian and nonabelian representations is
clear from equation (\ref{eqn:R-abAB}).
\end{proof}

Given a representation of $\pi_1(U_1 - A_1)$, we can pull back along
the inclusion
$T^2 - \{p_1,p_2\} \hookrightarrow U_1 - A_1$ to obtain a
representation of $\pi_1(T^2-\{p_1,p_2\})$.
This induces a map
$R(U_1, A_1) \rightarrow R(T^2,2)$.

\begin{definition}
We define the \emph{unperturbed} Lagrangian $L_1$ to be the image of
$R(U_1, A_1) \rightarrow R(T^2,2)$, and we denote the image in
$R(T^2,2)$ of the point in $R(U_1, A_1)$ with coordinates
$(\chi,\psi)$ by $L_1(\chi,\psi)$.
\end{definition}

The following Theorem gives an explicit description of the Lagrangian
$L_1$ in terms of the coordinates
$(\chi,\psi) \in [0,\pi] \times [-\pi/2,\pi/2]$:

\begin{theorem}
\label{theorem:Ld}
The map $R(U_1, A_1) \rightarrow R(T^2,2)$ is injective and is an
immersion on the interior of $R(U_1,A_1)$.
The image $L_1(\chi,\psi) = [A,B,a,b] \in R(T^2,2)$ of the point in
$R(U_1,A_1)$ with coordinates $(\chi,\psi)$ is given by
\begin{align}
  \nonumber
  A &= \cos\chi + \sin\chi\,\qz, &
  B &= 1, &
  a &= b^{-1} = \cos\psi\,\qx + \sin\psi\,\qz.
\end{align}
The image $L_1$ of the map lies entirely in the piece $P_3$, and the
$(\alpha,\beta,\gamma)$ coordinates of $L_1(\chi,\psi)$ are
\begin{align}
  \nonumber
  \alpha(L_1(\chi,\psi)) &= \chi, &
  \beta(L_1(\chi,\psi)) &= 0, &
  \gamma(L_1(\chi,\psi)) &= \psi.
\end{align}
Representations on the boundary of $L_1$ are abelian, and
representations on the interior of $L_1$ are nonabelian.
\end{theorem}

\begin{proof}
The representative $(A,B,a,b)$ of $L_1(\chi,\psi)$ follows directly
from equation
(\ref{eqn:R-abAB}), and the statement regarding abelian and nonabelian
representations is clear from the form of this representative.
The $(\alpha,\beta,\gamma)$ coordinates of $L_1(\chi,\psi)$ can be
read off from equation (\ref{eqn:param-mu1}).
It is clear from these expressions that the map
$R(U_1, A_1) \rightarrow R(T^2,2)$ is injective and is an immersion on
the interior of $R(U_1,A_1)$.
\end{proof}

We plot the Lagrangian $L_1$ in Figure \ref{fig:Ld}.
Theorems \ref{theorem:space-R} and \ref{theorem:Ld} imply Theorem
\ref{theorem:intro-R} from the Introduction.

\begin{figure}
  \centering
  \includegraphics[scale=0.7]{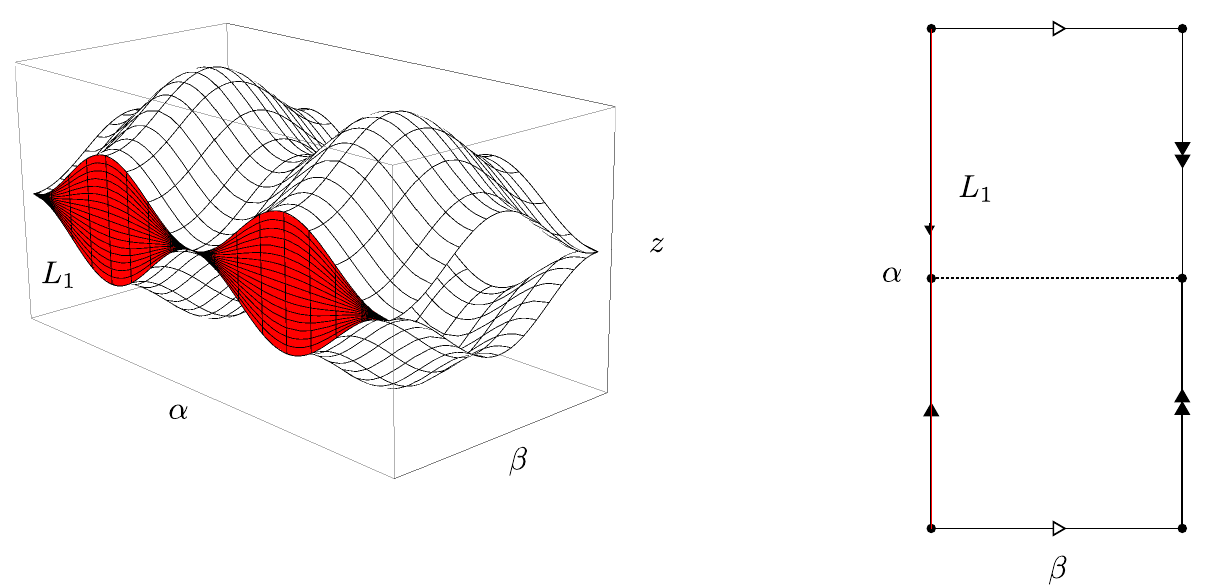}
  \caption{
    \label{fig:Ld}
    (Left)
    The unperturbed Lagrangian $L_1$ in the piece $P_3$.
    (Right)
    The intersection of the unperturbed Lagrangian $L_1$ with the
    pillowcase $P_3 \cap \{\gamma=0\}$.
  }
\end{figure}

\subsection{The perturbed character variety
  $R_\pi^\natural(U_1, A_1)$ and Lagrangian $L_1^\pi \subset R(T^2,2)$}
\label{sec:R-nat-pi}

We now want to modify the character variety $R(U_1,A_1)$ in order to
address the technical issues described in the Introduction.
Specifically, we want to (1) eliminate reducible connections, and (2)
introduce a suitable holonomy perturbation so as to render the
Chern-Simons functional Morse.
These modifications yield a perturbed character variety
$R_\pi^\natural(U_1,A_1)$.
We define a corresponding perturbed Lagrangian $L_1^\pi$ given by the
image of a pullback map
$R_\pi^\natural(U_1,A_1) \rightarrow R(T^2,2)$.

\begin{figure}[t]
  \centering
  \includegraphics[scale=0.7]{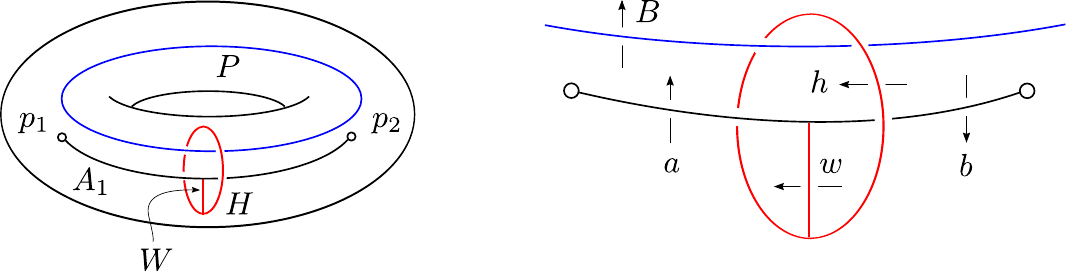}
  \caption{
    \label{fig:R-nat-pi}
    (Left)
    Solid torus $U_1$ used to
    define $R_\pi^\natural(S^1 \times D^2, A_1)$.
    Shown are the arc $A_1$, the loop $H$ and arc $W$, and the
    perturbation loop $P$.
    (Right)
    Loops $B$, $a$, $b$, $h$, and $w$.
  }
\end{figure}


We eliminate reducible connections by adding an \emph{earring}
consisting of a small loop $H$ around $A_1$ and an arc $W$
connecting $A_1$ to $H$, as shown in Figure \ref{fig:R-nat-pi}.
We require that representations take loops around $A_1$ and $H$ to
traceless matrices and loops around $W$ to $-1$.
One can show that representations satisfying these requirements must
be nonabelian, corresponding to irreducible connections.

We render the Chern-Simons functional Morse by adding a
holonomy perturbation term \cite{Kronheimer-1,Kronheimer-2}.
We choose a perturbation that vanishes outside of a small solid
torus obtained by thickening the loop $P$ shown in Figure
\ref{fig:R-nat-pi}.
The net effect of the perturbation is to impose an additional
requirement on the representations.
Specifically, letting $\lambda_P = h^{-1}A$ and $\mu_P = B$ denote the
homotopy classes of the longitude and meridian of the solid torus
obtained by thickening $P$, we require that if $\rho(\lambda_P)$ has
the form
\begin{align}
  \label{eqn:rho-lambdaP}
  \rho(\lambda_P) &=
  \cos \phi + \sin\phi\,(r_x\,\qx + r_y\,\qy + r_z\,\qz)
\end{align}
for some angle $\phi$ and some unit vector
$\hat{r} = (r_x, r_y, r_z) \in S^2$, then $\rho(\mu_P)$ must have the
form
\begin{align}
  \label{eqn:rho-muP}
  \rho(\mu_P) &=
  \cos\nu + \sin\nu\,(r_x\,\qx + r_y\,\qy + r_z\,\qz),
\end{align}
where $\nu = \epsilon f(\phi)$.
Here $\epsilon > 0$ is a small
parameter that controls the magnitude of the perturbation and
$f:\Reals \rightarrow \Reals$ is
a function such that $f(-x) = -f(x)$,
$f$ is $2\pi$-periodic, and $f(x)$ is zero if and only if $x$ is a
multiple of $\pi$.
We will usually take $f(\phi) = \sin\phi$.

We define a character variety $R_\pi^\natural(U_1,A_1)$ that includes
both of these modifications to $R(U_1,A_1)$:

\begin{definition}
\label{def:R-nat-pi}
We define the \emph{perturbed} character variety
$R_\pi^\natural(U_1, A_1)$
to be the space of conjugacy classes of homomorphisms
$\rho:\pi_1(U_1 - A_1\cup H \cup W \cup P) \rightarrow SU(2)$
that take loops around $A_1$ and $H$ to traceless matrices and loops
around $W$ to $-1$, and are such that if $\rho(\lambda_P)$ has the
form given in equation (\ref{eqn:rho-lambdaP}) then
$\rho(\lambda_P)$ must have the form given in equation
(\ref{eqn:rho-muP}).
\end{definition}

\begin{theorem}
\label{theorem:R-nat-pi}
For $\epsilon > 0$ sufficiently small, the space
$R_\pi^\natural(U_1, A_1)$ is homeomorphic to $S^2$.
All representations in $R_\pi^\natural(U_1, A_1)$ are nonabelian.
\end{theorem}

\begin{proof}
We define homotopy classes of loops $A$, $B$, $a$, $b$, and $h$ as
shown in Figure \ref{fig:R-nat-pi}, and read off relations from
 Figure \ref{fig:R-nat-pi} to obtain a presentation of
$\pi_1(U_1 - A_1\cup H \cup W \cup P)$:
\begin{align}
  \nonumber
  \pi_1(U_1 - A_1\cup H \cup W \cup P) =
  \langle A, B, a, b, h, w \mid hwaB = aBh,\,
  b = ha^{-1}w^{-1}h^{-1} \rangle.
\end{align}

We now consider homomorphisms
$\rho:\pi_1(U_1 - A_1\cup H \cup W \cup P) \rightarrow
SU(2)$ that satisfy the requirements described in Definition
\ref{def:R-nat-pi} for $R_\pi^\natural(U_1, A_1)$.
As usual, we use the same notation for generators of the fundamental
group and their images under $\rho$; for example, we denote
$\rho(A)$ by $A$.
Given an arbitrary representative of a conjugacy class
$[\rho] \in R_\pi^\natural(U_1, A_1)$, we will argue that
we can always conjugate so as to obtain a unique representative of the
form given by
\begin{align}
  \nonumber
  A &=
  h(\cos\phi + \sin\phi\,(\cos\theta\,\qx + \sin\theta\,\qy)), &
  \nonumber
  B &=
  \cos \nu + \sin\nu\,(\cos\theta\,\qx + \sin\theta\,\qy), \\
  \nonumber
  \nonumber
  a &= \qz, &
  \nonumber
  b &= -ha^{-1}h^{-1}, \\
  \nonumber
  h &= (\cos^2\nu + \sin^2\nu\sin^2\theta)^{-1/2}(
  \cos \nu\,\qx + \sin\nu \sin\theta\,\qz), &
  \nonumber
  w &= -1,
\end{align}
where $\nu = \epsilon \sin \phi$ and
$(\phi,\theta) \in [0,\pi] \times [0,2\pi]$ are spherical-polar
coordinates on $S^2$.
We first conjugate so that $a = \qz$.
Next, we rotate about the $z$-axis so that the coefficient of
$\qy$ in $h$ is zero.
After these operations have been performed, we can express $\lambda_P$
as
\begin{align}
  \nonumber
  \lambda_P = \cos\phi + \sin\phi\,(r_x\,\qx + r_y\,\qy + r_z\,\qz)
\end{align}
for some angle $\phi$ and some unit vector
$\hat{r} = (r_x, r_y, r_z) \in S^2$.
The relationship between $\lambda_P$ and $\mu_P$ described
in equations (\ref{eqn:rho-lambdaP}) and (\ref{eqn:rho-muP}) then
implies that
\begin{align}
  \nonumber
  B &= \mu_P = \cos\nu + \sin\nu\,(r_x\,\qx + r_y\,\qy + r_z\,\qz),
\end{align}
where $\nu = \epsilon \sin\phi$.
We also find that
\begin{align}
  \nonumber
  A = h\lambda_P =
  h(\cos\phi + \sin\phi\,(r_x\,\qx + r_y\,\qy + r_z\,\qz)).
\end{align}
Since $w=-1$, the relation $b = ha^{-1}w^{-1}h^{-1}$ implies that
$b = -ha^{-1}h^{-1}$,
and the relation $hwaB = aBh$ implies that $aB$ and $h$ anticommute.
Since $a = \qz$, the fact that $aB$ and $h$ anticommute implies that
$r_z = 0$, so $\hat{r} = (\cos\theta,\sin\theta,0)$
for some angle $\theta$.
The fact that $aB$ and $h$ anticommute, in conjunction with the fact
that the coefficient of $\qy$ in $h$ is zero, further implies
that $h$ must have the form
\begin{align}
  \label{eqn:h-pm}
  h =
  \pm(\cos^2\nu + \sin^2\nu\sin^2\theta)^{-1/2}
  (\cos \nu\,\qx + \sin\nu \sin\theta\,\qz).
\end{align}
In fact, we can assume that the plus sign obtains in equation
(\ref{eqn:h-pm}), since if not then we can conjugate by $\qz$
and redefine $\theta \mapsto \theta + \pi$; this operation flips the
signs of $h$ and $A$ and leaves $B$, $a$, $b$, and $w$ invariant.
We have thus obtained a representative of the desired form.
Since $a = \qz$ and the coefficient of $\qx$ in $h$ is
nonzero for $\epsilon$ sufficiently small, this representative is
unique and nonabelian.

We note that the unique representative is invariant under the
transformations
\begin{align}
  \nonumber
  (\phi,\theta) &\mapsto (\phi + 2\pi, \theta), &
  (\phi,\theta) &\mapsto (\phi, \theta + 2\pi), &
  (\phi,\theta) &\mapsto (-\phi, \theta + \pi).
\end{align}
By invariance under the first transformation we can assume that
$\phi \in [-\pi,\pi]$, by invariance under the third transformation we
can further assume that $\phi \in [0,\pi]$, and by invariance under the
second transformation we can assume that $\theta \in [0,2\pi]$.
From the equations defining the unique representative, it is clear
that the map $S^2 \rightarrow R_\pi^\natural(U_1, A_1)$,
$(\phi,\theta) \mapsto [\rho]$ is a homeomorphism, where
$(\phi,\theta)$ are spherical-polar coordinates on $S^2$.
\end{proof}

Given a representation of
$\pi_1(U_1 - A_1\cup H \cup W \cup P)$, we can pull back
along the inclusion
$U_1 - A_1\cup H \cup W \cup P \hookrightarrow T^2 - \{p_1,p_2\}$
to obtain a representation of $\pi_1(T^2 - \{p_1,p_2\})$.
This induces a map
$R_\pi^\natural(U_1, A_1) \rightarrow R(T^2,2)$.

\begin{definition}
We define the \emph{perturbed} Lagrangian $L_1^\pi$ to be the image of
$R_\pi^\natural(U_1, A_1) \rightarrow R(T^2,2)$, and we denote
the image in $R(T^2,2)$ of the point in
$R_\pi^\natural(U_1, A_1)$ with coordinates $(\phi,\theta)$
by $L_1^\pi(\phi,\theta)$.
\end{definition}

We can view the Lagrangian $L_1^\pi$ as a perturbation of $L_1$,
which we defined to be the image of
$R(U_1, A_1) \rightarrow R(T^2,2)$.
The following Theorem gives an explicit description of the Lagrangian
$L_1^\pi$ in terms of the spherical-polar coordinates
$(\phi,\theta) \in [0,\pi] \times [0,2\pi]$:

\begin{theorem}
\label{theorem:L1}
The map $R_\pi^\natural(U_1, A_1) \rightarrow R(T^2,2)$ is an
injective immersion except at the north pole $(\phi=0)$ and
south pole $(\phi=\pi)$, both of which get mapped to the same point
$(\alpha,\beta,\gamma) = (\pi/2,0,0)$ in the piece $P_3$.
The image
$L_1^\pi(\phi,\theta) = [A,B,a,b] \in R(T^2,2)$ of the point in
$R_\pi^\natural(U_1, A_1)$ with coordinates $(\phi,\theta)$ is given by
\begin{align}
  \nonumber
  A &=
  (\cos^2\nu + \sin^2\nu\sin^2\theta)^{-1/2}
  (\cos \nu\,\qx + \sin\nu \sin\theta\,\qz)
  (\cos\phi + \sin\phi\,(\cos\theta\,\qx + \sin\theta\,\qy)), \\
  \nonumber
  B &=
  \cos \nu + \sin\nu\,(\cos\theta\,\qx + \sin\theta\,\qy), \\
  \nonumber
  a &= \qz, \\
  \nonumber
  b &=
  (\cos^2\nu + \sin^2\nu\sin^2\theta)^{-1}
  (\sin2\nu \sin\theta\,\qx - (\cos^2\nu - \sin^2\nu\sin^2\theta)\qz),
\end{align}
where $\nu = \epsilon \sin \phi$ and $\epsilon > 0$ is a small control
parameter that determines the strength of the perturbation.
Points $L_1^\pi(\phi,\theta)$ with
$\phi \in (0,\pi)$, $\theta \notin \{0,\pi\}$ lie in the piece $P_4$,
and the $(\hat{a},\hat{b})$ coordinates of such points are
\begin{align}
  \nonumber
  \hat{a}(L_1^\pi(\phi,\theta)) &=
  (\sin(\phi+\nu),\, -\cos(\phi+\nu),\, 0), \\
  \nonumber
  b_x(L_1^\pi(\phi,\theta)) &=
  -(\cos^2\nu + \sin^2\nu\sin^2\theta)^{-1}
  (\cos^2\nu\cos^2\theta\sin(\phi+\nu) +
  \sin^2\theta\sin(\phi-\nu)), \\
  \nonumber
  b_y(L_1^\pi(\phi,\theta)) &=
  (\cos^2\nu + \sin^2\nu\sin^2\theta)^{-1}
  (\cos^2\nu\cos^2\theta\cos(\phi+\nu) +
  \sin^2\theta\cos(\phi-\nu)), \\
  \nonumber
  b_z(L_1^\pi(\phi,\theta)) &=
  (1/2)(\cos^2\nu + \sin^2\nu\sin^2\theta)^{-1}
  \sin(2\nu)\sin(2\theta)
\end{align}
for $\theta \in (0,\pi)$, and
\begin{align}
  \nonumber
  \hat{a}(L_1^\pi(\phi,\theta)) &=
  (-\sin(\phi+\nu),\,\cos(\phi+\nu),\,0), \\
  \nonumber
  b_x(L_1^\pi(\phi,\theta)) &=
  (\cos^2\nu + \sin^2\nu\sin^2\theta)^{-1}
  (\cos^2\nu\cos^2\theta\sin(\phi+\nu) +
  \sin^2\theta\sin(\phi-\nu)), \\
  \nonumber
  b_y(L_1^\pi(\phi,\theta)) &=
  -(\cos^2\nu + \sin^2\nu\sin^2\theta)^{-1}
  (\cos\nu^2\cos^2\theta\cos(\phi+\nu) +
  \sin^2\theta \cos(\phi-\nu)), \\
  \nonumber
  b_z(L_1^\pi(\phi,\theta)) &=
  (1/2)(\cos^2\nu + \sin^2\nu\sin^2\theta)^{-1}
  \sin(2\nu)\sin(2\theta)
\end{align}
for $\theta \in (\pi,2\pi)$.
Points $L_1^\pi(\phi,\theta)$ with $\theta \in \{0,\pi\}$ lie in the piece
$P_3$, and the $(\alpha,\beta,\gamma)$ coordinates of such points are
\begin{align}
  \nonumber
  \alpha(L_1^\pi(\phi,0)) &= \phi + \pi/2, &
  \beta(L_1^\pi(\phi,0)) &= \nu = \epsilon \sin\phi, &
  \gamma(L_1^\pi(\phi,0)) &= 0, \\
  \nonumber
  \alpha(L_1^\pi(\phi,\pi)) &= \phi - \pi/2, &
  \beta(L_1^\pi(\phi,\pi)) &= \nu = \epsilon\sin\phi, &
  \gamma(L_1^\pi(\phi,\pi)) &= 0.
\end{align}
All representations in $L_1^\pi$ are nonabelian.
\end{theorem}

\begin{proof}
The representative $(A,B,a,b)$ of
$L_1^\pi(\phi,\theta)$ follows directly from the proof of Theorem
\ref{theorem:R-nat-pi}.
The fact that all representations in $L_1^\pi$ are nonabelian follows from
the fact that $a = \qz$ and the coefficient of either $\qx$ or $\qy$
in $B$ is nonzero.
We find the $(\hat{a},\hat{b})$ coordinates for points
$L_1^\pi(\phi,\theta) \in P_4$ by conjugating
the representative of $L_1^\pi(\phi,\theta)$ so that $A$ and $B$
have the form given in equation (\ref{eqn:AB}), then reading off
$\hat{a} = (a_x, a_y, a_z)$ and
$\hat{b} = (b_x, b_y, b_z)$ from
$a = a_x\,\qx + a_y\,\qy + a_z\,\qz$ and
$b = b_x\,\qx + b_y\,\qy + b_z\,\qz$.
We find the $(\alpha,\beta,\gamma)$ coordinates for points
$L_1^\pi(\phi,\theta) \in P_3$ by substituting
$\theta = 0$ and $\theta = \pi$
into the representative of $L_1^\pi(\phi,\theta)$ and then conjugating the
resulting equations so they have the form given in equation
(\ref{eqn:param-mu1}).

We will prove that
$R_\pi^\natural(U_1, A_1) \rightarrow R(T^2,2)$
is an injective immersion on
$\phi \in (0,\pi)$, $\theta \neq \{0,\pi\}$ by
showing that the coordinates $(\phi,\theta)$ can be recovered from
certain functions defined on $R(T^2,2)$.
Define functions $h_1:R(T^2,2) \rightarrow \Complex$ and
$h_2:R(T^2,2) \cap \{\tr Aa \neq 0\} \rightarrow \Reals$ by
\begin{align}
  \nonumber
  h_1([A,B,a,b]) &= -\tr AB - (i/2)(\tr B)(\tr Aa), &
  h_2([A,B,a,b]) &= -\frac{\tr Ab}{\tr Aa}.
\end{align}
A calculation shows that
\begin{align}
  \nonumber
  h_1(L_1^\pi(\phi,\theta)) =
  \frac{2\cos\nu\sin(\phi + \nu) e^{i\theta}}
  {\sqrt{\cos^2\nu + \sin^2\nu\sin^2\theta}}.
\end{align}
We note that if $\phi \in (0,\pi)$ then
$h_1(L_1^\pi(\phi,\theta)) \neq 0$ and
$\Arg (h_1(L_1^\pi(\phi,\theta))) = \theta$.
A calculation shows that $(\tr Aa)(L_1^\pi(\phi,\theta)) \neq 0$ for
$\phi \in (0,\pi)$, $\theta \neq \{0,\pi\}$,
and for such values of $(\phi,\theta)$ we have
\begin{align}
  \nonumber
  h_2(L_1^\pi(\phi,\theta)) =
  \frac{\sin(\phi - \nu)}{\sin(\phi + \nu)} =
  \frac{\sin(\phi - \epsilon\sin\phi)}{\sin(\phi + \epsilon\sin\phi)}.
\end{align}
Define $\tilde{h}_2(\phi)$ to be the right-hand-side of this
equation.
It is straightforward to show that if $\epsilon$ is sufficiently small
then $\tilde{h}_2'(\phi) > 0$ for all $\phi \in (0,\pi)$, hence
$\tilde{h}_2:(0,\pi) \rightarrow \Reals$ is a diffeomorphism onto its
image.
We conclude that
$R_\pi^\natural(U_1, A_1) \rightarrow R(T^2,2)$ is an injective
immersion on $\phi \in (0,\pi)$, $\theta \neq \{0,\pi\}$.

We similarly prove that 
$R_\pi^\natural(U_1, A_1) \rightarrow R(T^2,2)$
is an immersion on $\phi \in (0,\pi)$, $\theta \in \{0,\pi\}$ by using
the functions
$h_1:R(T^2,2) \rightarrow \Complex$ and
$\alpha:P_3 \rightarrow \Reals$.
The statements regarding the injectivity of the map for
$\theta \in \{0,\pi\}$ are clear from the expressions for the
$(\alpha,\beta,\gamma)$ coordinates.
\end{proof}

\begin{figure}
  \centering
  \includegraphics[scale=0.7]{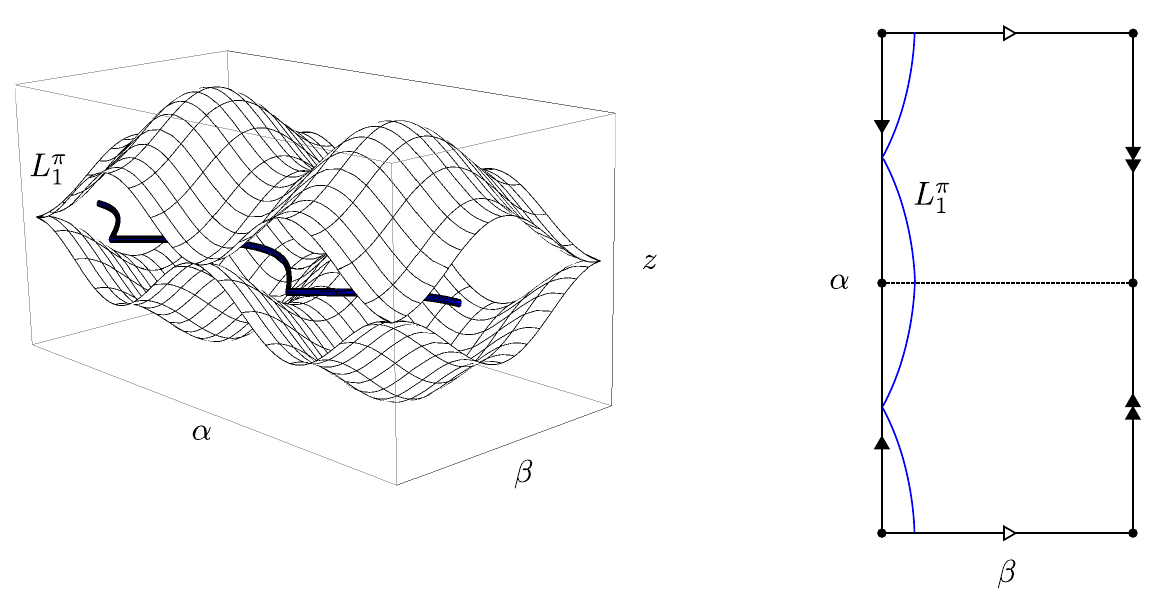}
  \caption{
    \label{fig:L1}
    (Left)
    The intersection of the perturbed Lagrangian $L_1^\pi$ with the
    piece $P_3$.
    (Right)
    The intersection of the perturbed Lagrangian $L_1^\pi$ with the
    pillowcase $P_3 \cap \{\gamma=0\}$.
  }
\end{figure}

We plot the intersection of the Lagrangian $L_1^\pi$ with the piece $P_3$
in Figure \ref{fig:L1}.
Theorems \ref{theorem:R-nat-pi} and \ref{theorem:L1} imply
Theorem \ref{theorem:intro-R-nat-pi} from the Introduction.

\begin{remark}
One can also define a character variety $R^\natural(U_1,A_1)$ that
includes the earring but not the holonomy perturbation.
It is straightforward to show that $R^\natural(U_1,A_1)$ is
homeomorphic to $S^3$ and all representations in $R^\natural(U_1,A_1)$
are nonabelian.
We will not use the character variety $R^\natural(U_1,A_1)$ here.
\end{remark}

\section{Nondegeneracy}

In this section, we adapt an argument from \cite{Abouzaid} to obtain
a simple criterion for determining when a point
$[\rho] \in R_\pi^\natural(Y,K)$ is nondegenerate; namely,
it is nondegenerate if and only if the Lagrangians
$L_1^\pi$ and $L_2$ in
$R(T^2,2)$ corresponding to the Heegaard splitting of $(Y,K)$
intersect transversely at the image of $[\rho]$ under the pullback map
$R_\pi^\natural(Y,K) \rightarrow R(T^2,2)$.
The argument relies on several results involving group cohomology and
the regularity of character varieties, which we discuss first.

\subsection{Constrained group cohomology}
\label{ssec:group-cohomology}

Consider a finitely presented group
$\Gamma = \langle S \mid R \rangle$ with generators
$S = \{s_1, \cdots, s_n\}$ and relations
$R = \{r_1, \cdots, r_m\}$.
In defining character varieties, we often want to
consider a space $X(\Gamma) \subseteq \Hom(\Gamma,SU(2))$
consisting of homomorphisms that satisfy certain constraints;
for example, we may require the homomorphisms to map certain
generators to traceless matrices.
Provided the constraints are algebraic, the space $X(\Gamma)$ has the
structure of a real algebraic variety, and we can define a
corresponding scheme ${\mathcal X}(\Gamma)$ whose set of closed points is
$X(\Gamma)$.
The group $SU(2)$ acts on the variety $X(\Gamma)$ by conjugation, and
we define the character variety $R(\Gamma)$ and character scheme
${\mathcal R}(\Gamma)$ to be the GIT quotients
$X(\Gamma)//SU(2)$ and ${\mathcal X}(\Gamma)//SU(2)$.
Generalizing a result due to Weil for the unconstrained case
\cite{Weil},
we have that the Zariski tangent space $T_{[\rho]}{\mathcal R}(\Gamma)$ of
the character scheme ${\mathcal R}(\Gamma)$ at a closed point $[\rho]$ can
be identified with the constrained group cohomology
$H_c^1(\Gamma; \Ad \rho)$, which we define here.

Roughly speaking, the constrained group cohomology
$H_c^1(\Gamma; \Ad \rho)$ describes infinitesimal deformations of
homomorphisms $\rho:\Gamma \rightarrow SU(2)$ that satisfy the
relevant constraints, modulo deformations that can be obtained by the
conjugation action of $SU(2)$.
The precise definition of $H_c^1(\Gamma; \Ad \rho)$ that we will use
is as follows.
Define a function
$F_r:\Hom(\langle S \rangle,SU(2)) \rightarrow SU(2)^m$,
where $\langle S \rangle$ is the free group on $S$, by
\begin{align}
  \nonumber
  F_r(\rho) = (\rho(r_1),\cdots,\rho(r_m)).
\end{align}
Thus $F_r(\rho) = (1,\cdots, 1)$ if and only if
$\rho:\langle S \rangle \rightarrow SU(2)$ preserves all the
relations in $R$ and thus descends to a homomorphism
$\rho:\Gamma \rightarrow SU(2)$.
Given a homomorphism $\rho:\Gamma \rightarrow SU(2)$ and a function
$\eta:S \rightarrow \mathfrak{g}$, where $\mathfrak{g}$ is the Lie
algebra of $SU(2)$, define a homomorphism
$\rho_t:\langle S \rangle \rightarrow SU(2)$ such that
\begin{align}
  \nonumber
  \rho_t(s_k) = e^{t\eta(s_k)}\rho(s_k).
\end{align}
Note that we can view $\eta$ as a vector in $\mathfrak{g}^{\oplus n}$.
We define a linear map
$c_r:\mathfrak{g}^{\oplus n} \rightarrow \mathfrak{g}^{\oplus m}$ by
\begin{align}
  \nonumber
  c_r(\eta) = \frac{d}{dt}F_r(\rho_t)|_{t=0}.
\end{align}
Thus $c_r(\eta) = 0$ if and only if $\eta$ describes an infinitesimal
deformation of $\rho$ that is a homomorphism
$\Gamma \rightarrow SU(2)$.

Homomorphisms $\Gamma \rightarrow SU(2)$ that represent points
in a character variety may be required to satisfy certain
constraints; for example, that they take particular generators to
traceless matrices.
Define a function
$F_c:\Hom(\langle S \rangle,SU(2)) \rightarrow \Reals^q$ such that
$F_c(\rho) = 0$ if
and only if $\rho$ satisfies these constraints; for example, if we
require that $\rho$ take the generator $s_1$ to a traceless matrix, we
would define $F_c:\Hom(\langle S\rangle,SU(2)) \rightarrow \Reals$ by
\begin{align}
  \nonumber
  F_c(\rho) = \tr(\rho(s_1)).
\end{align}
We define a linear map
$c_c:\mathfrak{g}^{\oplus n} \rightarrow \Reals^q$ by
\begin{align}
  \nonumber
  c_c(\eta) = \frac{d}{dt}F_c(\rho_t)|_{t=0}.
\end{align}
Thus $c_c(\eta) = 0$ if and only if $\eta$ describes an infinitesimal
deformation of $\rho$ that satisfies the constraints.

We now combine the linear maps for the relations and constraints to
obtain a linear map
$c:\mathfrak{g}^{\oplus n} \rightarrow
\mathfrak{g}^{\oplus m} \oplus \Reals^q$,
$c(\eta) = (c_r(\eta), c_c(\eta))$.
Given a homomorphism $\rho:\Gamma \rightarrow SU(2)$ that satisfies
the constraints, we define the space of 1-cocycles to be
\begin{align}
  \nonumber
  Z_c^1(\Gamma; \Ad \rho) &= \ker c,
\end{align}
so a vector $\eta \in \mathfrak{g}^n$ is a 1-cocycle if and only if it
describes an infinitesimal deformation of $\rho$ that is a
homomorphism that preserves the constraints.
We define the space of 1-coboundaries to be infinitesimal deformations
of $\rho$ that are obtained via the conjugation action of $SU(2)$:
\begin{align}
  \nonumber
  B_c^1(\Gamma; \Ad \rho) &=
  \{\eta:S \rightarrow \mathfrak{g} \mid
  \textup{there exists $u \in \mathfrak{g}$ such that
    $\eta(s_k) = u - \Ad_{\rho(s_k)}u$ for $k=1,\cdots, n$}\}.
\end{align}
Here $\Ad_g u := g u g^{-1}$ for $g \in SU(2)$ and
$u \in \mathfrak{g}$.
We define the constrained group cohomology $H_c^1(\Gamma; \Ad\rho)$ to
be
\begin{align}
  \nonumber
  H_c^1(\Gamma;\Ad\rho) =
  Z_c^1(\Gamma;\Ad\rho)/B_c^1(\Gamma;\Ad\rho).
\end{align}

\subsection{Regularity}

We define the \emph{local dimension} $\dim_{[\rho]} R(\Gamma)$ of
$R(\Gamma)$ at $[\rho] \in R(\Gamma)$ to be the maximal
dimension of the irreducible components of $R(\Gamma)$ containing
$[\rho]$.
We say that a point $[\rho]$ of a character variety
$R(\Gamma)$ is \emph{regular} if
\begin{align*}
  \dim_{[\rho]} R(\Gamma) = \dim H_c^1(\Gamma; \Ad \rho).
\end{align*}
We define $R'(\Gamma)$ to be the set of regular points of
$R(\Gamma)$.
The set $R'(\Gamma)$ has the structure of a smooth manifold, and the
smooth tangent space at a point $[\rho] \in R'(\Gamma)$ is
$T_{[\rho]} R(\Gamma) = H_c^1(\Gamma; \Ad \rho)$.
We will prove theorems that describe the regular
points of the character varieties $R(U_1,A_1)$,
$R_\pi^\natural(U_1,A_1)$, and $R(T^2,2)$:

\begin{theorem}
\label{theorem:reg-R}
The character variety $R(U_1,A_1)$ is regular at all points
represented by nonabelian homomorphisms.
\end{theorem}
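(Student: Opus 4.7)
The plan is a direct dimension count of the constrained group cohomology $H_c^1(\Gamma;\Ad\rho)$ for $\Gamma = \pi_1(S^1\times D^2 - A_1)$, using the fact that by Theorem \ref{theorem:space-R} the character variety $R(S^1 \times D^2, A_1)$ is a two-dimensional disk whose interior is precisely the locus of nonabelian representations.

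First I would simplify the presentation of $\Gamma$ from Theorem \ref{theorem:space-R}: the relations $B = 1$ and $b = a^{-1}$ eliminate $B$ and $b$, so $\Gamma$ is the free group on the generators $A$ and $a$. The traceless requirement on loops around $A_1$ then reduces to the single algebraic constraint $\tr(\rho(a)) = 0$, since $\tr(\rho(b)) = \tr(\rho(a)^{-1}) = \tr(\rho(a))$ in $SU(2)$. Fixing the normal form $\rho(a) = i\sigma_z$ supplied by Theorem \ref{theorem:space-R}, nonabelianness of $\rho$ translates to the condition that the rotation axis of $\rho(A)$ has nonzero $\sigma_x$-component, and in particular does not coincide with the $z$-axis.

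Next I would compute the cocycle map $c\colon \mathfrak{g}^{\oplus 2} \to \Reals$ explicitly. Since $\Gamma$ is free there is no contribution from relations, so $c$ is just the constraint differential $c(\eta_A, \eta_a) = \tr(\eta_a\, \rho(a))$. Varying the $i\sigma_z$-component of $\eta_a$ shows that $c$ is surjective, so $\dim Z_c^1 = 6 - 1 = 5$. For the coboundaries, the map $\mathfrak{g}\to\mathfrak{g}^{\oplus 2}$ given by $u \mapsto (u - \Ad_{\rho(A)}u,\; u - \Ad_{\rho(a)}u)$ has kernel equal to the common $\Ad$-fixed subspace of $\rho(A)$ and $\rho(a)$. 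Any such vector must lie along a common rotation axis, which does not exist for nonabelian $\rho$ since both $\rho(A)$ and $\rho(a)$ are non-central and have distinct axes; hence the kernel is trivial and $\dim B_c^1 = 3$.

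Combining these yields $\dim H_c^1(\Gamma;\Ad\rho) = 5 - 3 = 2$, which matches $\dim_{[\rho]} R(S^1\times D^2, A_1)$ by Theorem \ref{theorem:space-R}, establishing regularity. There is no real obstacle; the only step worth dwelling on is the identification ``$\rho(A)$ and $\rho(a)$ do not commute in $SU(2)$'' $\iff$ ``$\Ad_{\rho(A)}$ and $\Ad_{\rho(a)}$ have no common nonzero fixed vector in $\mathfrak{su}(2)$,'' which follows from the standard fact that two non-central $SU(2)$-elements commute precisely when their rotation axes coincide.
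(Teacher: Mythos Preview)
Your proposal is correct and follows essentially the same approach as the paper: both reduce to the free group on $A$ and $a$ with the single constraint $F_c(\rho)=\tr(\rho(a))$, obtaining the linear map $c:\Reals^6\to\Reals$ and concluding $\dim H_c^1(\Gamma;\Ad\rho)=2$ at nonabelian points. The paper's proof simply asserts the final dimension count as ``a straightforward calculation,'' whereas you have spelled out the surjectivity of $c$ and the injectivity of the coboundary map, which is exactly what that calculation amounts to.
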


\begin{proof}
Using results from the proof of Theorem \ref{theorem:space-R}, we
find that we can take the set of generators of the fundamental group
$\Gamma$ to be $S = \{A, a\}$, with no relations, and we can take
the constraint function
$F_c:\Hom(\langle S \rangle, SU(2)) \rightarrow \Reals$ to be
\begin{align}
  \nonumber
  F_c(\rho) = \tr(\rho(a)).
\end{align}
Using the expressions for the
homomorphisms $\rho:\Gamma \rightarrow SU(2)$ given in the proof of
Theorem \ref{theorem:space-R}, we obtain a linear map
$c:\Reals^6 \rightarrow \Reals$.
A straightforward calculation shows that
$\dim H_c^1(\Gamma; \Ad \rho) = \dim R(U_1,A_1) = 2$ for all
$[\rho] \in R(U_1,A_1)$ such that $\rho$ is nonabelian.
\end{proof}

We would next like to determine the regular points of the
perturbed character variety $R_\pi^\natural(U_1,A_1)$, but
there are two difficulties that must be overcome.
The first difficulty involves the function $f(\phi)$ that defines the
perturbation.
Recall that points $[\rho] \in R_\pi^\natural(U_1,A_1)$ are
constrained by the requirement that if $\rho(\lambda_P)$ has the form
$\rho(\lambda_P) =
\cos\phi + \sin\phi\,(r_x\,\qx + r_y\,\qy + r_z\,\qz)$,
then $\rho(\mu_P)$ must have the form
$\rho(\mu_P) = \cos\nu + \sin\nu\,(r_x\,\qx + r_y\,\qy + r_z\,\qz)$,
where $\nu = \epsilon f(\phi)$.
In order to give $R_\pi^\natural(U_1,A_1)$ the structure of a real
algebraic variety, and to define the corresponding character scheme,
this constraint must be algebraic.
We will therefore choose $f(\phi)$ to be
\begin{align}
  \label{eqn:f-alg}
  f(\phi) = \frac{1}{\epsilon}\sin^{-1}(\epsilon \sin\phi).
\end{align}
Then the constraint on $\rho$ becomes
\begin{align}
  \label{eqn:alg-constraint}
  \epsilon\tr(\rho(\lambda_P)\qx) &= \tr(\rho(\mu_P)\qx), &
  \epsilon\tr(\rho(\lambda_P)\qy) &= \tr(\rho(\mu_P)\qy), &
  \epsilon\tr(\rho(\lambda_P)\qz) &= \tr(\rho(\mu_P)\qz).
\end{align}
In fact, the constraint given in equation (\ref{eqn:alg-constraint})
yields a variety with two connected components, one with
$\rho(\mu_P)$ near 1 and one with $\rho(\mu_P)$ near -1, and only the
first component corresponds to $R_\pi^\natural(U_1,A_1)$.
To calculate the constrained group cohomology, however, we consider
only infinitesimal deformations of homomorphisms, hence the
extraneous second component is irrelevant.

A second difficulty in determining the regular points of
$R_\pi^\natural(U_1,A_1)$
is that a direct calculation of the constrained group
cohomology for
$R_\pi^\natural(U_1,A_1)$ does not appear to be practical,
because the perturbed representations, as described in Theorem
\ref{theorem:R-nat-pi}, are rather complicated.
Instead, we will apply the following theorem, which simplifies the
necessary calculations by allowing us to extrapolate from
unperturbed representations:

\begin{theorem}
\label{theorem:c-taylor}
Consider a character variety $R_\epsilon(\Gamma)$ in which the
homomorphisms are required to satisfy an algebraic
constraint that
depends on a control parameter $\epsilon \in \Reals$.
Given a homomorphism  $\rho_\epsilon:\Gamma \rightarrow SU(2)$
representing a point $[\rho_\epsilon] \in R_\epsilon(\Gamma)$, let
$c_\epsilon:\mathfrak{g}^{\oplus n} \rightarrow
\mathfrak{g}^{\oplus m} \oplus \Reals^q$ denote the corresponding
linear map used to define the constrained group cohomology.
Define
$c_0, c_1:\mathfrak{g}^{\oplus n} \rightarrow
\mathfrak{g}^{\oplus m} \oplus \Reals^q$ such that
$c_\epsilon = c_0 + \epsilon c_1 + \cdots$.
The following string of inequalities holds for $\epsilon > 0$
sufficiently small:
\begin{align}
  \label{eqn:dim-cokernel-bound}
  \dim Z_c^1(\Gamma; \Ad \rho_\epsilon) \leq
  \dim (\ker c_0 \cap \ker c_1) +
  \dim (c_1(\ker c_0) \cap \im c_0) \leq
  \dim Z_c^1(\Gamma; \Ad \rho_0).
\end{align}
\end{theorem}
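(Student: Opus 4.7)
The plan is to reduce both inequalities to the single auxiliary linear map
\[
  \tilde c : \ker c_0 \longrightarrow W/\im c_0, \qquad \tilde c(v) = [c_1(v)],
\]
where $W := \mathfrak{g}^{\oplus m} \oplus \Reals^q$ denotes the codomain of $c_\epsilon$. First I would verify that the middle expression in (\ref{eqn:dim-cokernel-bound}) equals $\dim \ker \tilde c$. This is a short computation: rank-nullity applied to $\tilde c$ gives $\dim \ker \tilde c = \dim \ker c_0 - \dim \im \tilde c$, while the second isomorphism theorem identifies $\im \tilde c$ with $c_1(\ker c_0)/(c_1(\ker c_0) \cap \im c_0)$; combining these with the identity $\dim c_1(\ker c_0) = \dim \ker c_0 - \dim(\ker c_0 \cap \ker c_1)$ (rank-nullity for $c_1$ restricted to $\ker c_0$) produces the middle expression on the nose. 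With this identification, the right inequality becomes $\dim \ker \tilde c \leq \dim \ker c_0$, which is trivial since $\ker \tilde c$ is a subspace of $\ker c_0$.

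The content of the theorem is therefore the left inequality, which I would prove via a block decomposition. Fix complements $\mathfrak{g}^{\oplus n} = \ker c_0 \oplus V$ and $W = \im c_0 \oplus C$, and write $c_\epsilon$ as a $2 \times 2$ block matrix with respect to these. The upper-right block $c_0|_V : V \to \im c_0$ is an isomorphism at $\epsilon = 0$; since invertibility is an open condition on matrices, the corresponding block of $c_\epsilon$ remains an isomorphism for all $\epsilon$ sufficiently small. I would then eliminate the $V$-component of the equation $c_\epsilon(u,v) = 0$ by solving the first block equation for $v$ in terms of $u \in \ker c_0$ and substituting into the second, reducing the full system to a single equation $M(\epsilon)\, u = 0$, where $M(\epsilon) : \ker c_0 \to C$ depends continuously on $\epsilon$. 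This elimination simultaneously produces a linear isomorphism $\ker M(\epsilon) \to \ker c_\epsilon$, $u \mapsto (u, v(u))$, so in particular $\dim \ker c_\epsilon = \dim \ker M(\epsilon)$.

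The key step is to identify the leading term $M(0)$ with $\tilde c$ under the natural isomorphism $C \cong W/\im c_0$. Reading this directly off the block form, $M(0)$ is simply the $\ker c_0 \to C$ block of $c_1$, which corresponds to $\tilde c$. Applying upper semicontinuity of $\dim \ker$ for a continuous family of linear maps (equivalently, lower semicontinuity of rank, which follows from the fact that some $k \times k$ minor being nonzero is an open condition) then yields
\[
  \dim \ker c_\epsilon \;=\; \dim \ker M(\epsilon) \;\leq\; \dim \ker M(0) \;=\; \dim \ker \tilde c
\]
for $\epsilon$ sufficiently small, completing the left inequality.

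I do not expect any genuine obstacle beyond bookkeeping: rank-nullity, openness of invertibility for matrices, and upper semicontinuity of $\dim \ker$ in a continuous family are all standard, and the essential insight is simply the recognition that both the perturbed kernel and the middle quantity are controlled by the single map $\tilde c$. Once that reduction is performed, the theorem reduces to elementary linear algebra.
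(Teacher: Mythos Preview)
Your overall strategy is sound and essentially parallels the paper's: both arguments identify the middle quantity with $\dim V$, where $V = \ker\tilde c = \{v\in\ker c_0 : c_1(v)\in\im c_0\}$ is the space of ``first--order solutions,'' and then bound $\dim\ker c_\epsilon$ by $\dim V$ via semicontinuity. The paper does the latter step informally by expanding putative solutions $w_\epsilon = w_0 + \epsilon w_1 + \cdots$ and reading off the constraint $w_0\in V$; your Schur--complement reduction is a more explicit version of the same idea.

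There is, however, a genuine slip in your final step. In your block conventions the Schur complement is $M(\epsilon) = D(\epsilon) - E(\epsilon)\,B(\epsilon)^{-1}A(\epsilon)$, where $A,D,E$ are the blocks of $c_\epsilon$ that vanish at $\epsilon=0$ (since $c_0$ has only the one nonzero block $B(0):V\xrightarrow{\sim}\im c_0$). Hence $M(0)=0$, not $\tilde c$, and upper semicontinuity of $\dim\ker M(\epsilon)$ at $\epsilon=0$ only recovers the trivial right inequality $\dim\ker c_\epsilon\le\dim\ker c_0$. The fix is immediate: since $A,D,E$ are each $O(\epsilon)$, one has $M(\epsilon)=\epsilon D_1 + O(\epsilon^2)$, so $\tilde M(\epsilon):=\epsilon^{-1}M(\epsilon)$ extends continuously to $\epsilon=0$ with $\tilde M(0)=D_1$, and $D_1$ really is the $\ker c_0\to C$ block of $c_1$, i.e.\ $\tilde c$ under $C\cong W/\im c_0$. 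Since $\ker M(\epsilon)=\ker\tilde M(\epsilon)$ for $\epsilon\neq 0$, upper semicontinuity applied to $\tilde M$ (rather than $M$) gives the desired left inequality. This division by $\epsilon$ is precisely the ``equate terms of order $\epsilon$'' step in the paper's power--series argument.
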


\begin{proof}
Since the dimension of the Zariski tangent space is upper
semi-continuous, for $\epsilon > 0$ sufficiently small we have that
\begin{align}
  \nonumber
  \dim (\ker c_\epsilon) = \dim Z_c^1(\Gamma; \Ad \rho_\epsilon) \leq
  \dim Z_c^1(\Gamma; \Ad \rho_0) = \dim (\ker c_0).
\end{align}
Thus any vector $w_\epsilon \in \ker c_\epsilon$ must have the form
$w_\epsilon = w_0 + \epsilon w_1 + \cdots$, where
\begin{align}
  \label{eqn:c-taylor}
  c_\epsilon(w_\epsilon) = c_0(w_0) +
  \epsilon(c_0(w_1) + c_1(w_0)) + \cdots = 0.
\end{align}
The space of vectors $w_0 \in \mathfrak{g}^{\oplus n}$ that
satisfy equation (\ref{eqn:c-taylor}) up to first order in $\epsilon$
is
\begin{align}
  \nonumber
  V = \{w_0 \in \ker c_0 \mid c_1(w_0) \in \im c_0\}.
\end{align}
Since $\ker c_\epsilon = Z_c^1(\Gamma; \Ad \rho_\epsilon)$ is the space
of vectors that satisfies equation (\ref{eqn:c-taylor}) to all orders
in $\epsilon$, it follows that
$Z_c^1(\Gamma; \Ad \rho_\epsilon) \subseteq V \subseteq
\ker c_0 = Z_c^1(\Gamma; \Ad \rho_0)$, and we have the string
of inequalities
\begin{align}
  \nonumber
  \dim Z_c^1(\Gamma; \Ad \rho_\epsilon) \leq \dim V \leq
  \dim Z_c^1(\Gamma; \Ad \rho_0).
\end{align}
Equation (\ref{eqn:dim-cokernel-bound}) now follows from the fact that
\begin{align}
  \nonumber
  \dim V =
  \dim (\ker c_0 \cap \ker c_1) +
  \dim (c_1(\ker c_0) \cap \im c_0).
\end{align}
\end{proof}

\begin{example}
Take $\Gamma = \Ints$, and consider the character varieties
$R_\epsilon^i(\Gamma)$ for $i=1,2,3$ with constraint functions
$F_c^i:\Hom(\Gamma,SU(2)) \rightarrow \Reals$ given by
\begin{align}
  \nonumber
  F_c^1(\rho) &= \epsilon \tr \rho(1), &
  F_c^2(\rho) &= \epsilon (\tr \rho(1))^2, &
  F_c^3(\rho) &= \epsilon (\tr \rho(1))^2 + \epsilon^2 \tr \rho(1).
\end{align}
The character varieties are given by
\begin{align}
  \nonumber
  R_\epsilon^1(\Gamma) = R_\epsilon^2(\Gamma) = R_\epsilon^3(\Gamma) =
  \left\{
  \begin{array}{ll}
    S^2 & \quad \mbox{if $\epsilon \neq 0$,} \\
    S^3 & \quad \mbox{if $\epsilon = 0$.} \\
  \end{array}
  \right.
\end{align}
Consider the homomorphism $\rho_\epsilon:\Ints \rightarrow SU(2)$,
$\rho_\epsilon(1) = \qz$.
Then $\dim Z_c^1(\Gamma; \Ad \rho_\epsilon)$ and
$\dim (\ker c_0 \cap \ker c_1) + \dim (c_1(\ker c_0) \cap \im c_0)$
are given by
\begin{align}
  \nonumber
  \begin{array}{cccc}
    {} & F_c^1 & F_c^2 & F_c^3 \\
    \dim Z_c^1(\Gamma; \Ad \rho_\epsilon) & 2 & 3 & 2 \\
    \dim (\ker c_0 \cap \ker c_1) + \dim (c_1(\ker c_0) \cap \im c_0)
    & 2 & 3 & 3
  \end{array}
\end{align}
From the expressions for $\dim Z_c^1(\Gamma; \Ad \rho_\epsilon)$, we
find that for $\epsilon \neq 0$ the character schemes
${\mathcal R}_\epsilon^1(\Gamma)$ and ${\mathcal R}_\epsilon^3(\Gamma)$ are
reduced, and the character scheme
${\mathcal R}_\epsilon^2(\Gamma)$ is not reduced.
We can use Theorem \ref{theorem:c-taylor} to show that
${\mathcal R}_\epsilon^1(\Gamma)$ is reduced, but not that
${\mathcal R}_\epsilon^3(\Gamma)$ is reduced.
\end{example}

\begin{theorem}
\label{theorem:reg-R-nat-pi}
The character variety $R_\pi^\natural(U_1,A_1)$ is
regular everywhere.
\end{theorem}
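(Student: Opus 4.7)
The plan is to apply Theorem \ref{theorem:c-taylor} with the unperturbed ($\epsilon = 0$) representations as a reference, and then compare $\dim H_c^1(\Gamma; \Ad\rho_\epsilon)$ against $\dim R_\pi^\natural(S^1 \times D^2, A_1) = 2$. Using the presentation of $\Gamma = \pi_1(S^1 \times D^2 - A_1 \cup H \cup W \cup P)$ given just before Theorem \ref{theorem:R-nat-pi} (six generators $A,B,a,b,h,w$ and the two relations $hwaB = aBh$ and $b = ha^{-1}w^{-1}h^{-1}$), I would encode the constraints as the algebraic conditions $\tr(a) = 0$, $\tr(h) = 0$, $w = -1$ (three scalar equations), plus the perturbation condition (\ref{eqn:alg-constraint}) for $i = x,y,z$, which at $\epsilon = 0$ specializes to $B = 1$. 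At $\epsilon = 0$ the representation $\rho_0$ is of the form given in equations (\ref{eqn:R-nat-a})--(\ref{eqn:R-nat-w}) for a point of $R^\natural(S^1\times D^2,A_1)$, so the ``reference'' problem is one we can compute directly.

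Next I would assemble the linear map $c_\epsilon: \mathfrak{g}^{\oplus 6} \to \mathfrak{g}^{\oplus 2} \oplus \Reals^{q}$ and expand $c_\epsilon = c_0 + \epsilon c_1 + \cdots$. For the unperturbed problem, I would first compute $\dim Z_c^1(\Gamma; \Ad \rho_0) = \dim \ker c_0$ directly from (\ref{eqn:R-nat-a})--(\ref{eqn:R-nat-w}); since $R^\natural(S^1\times D^2,A_1) \cong S^3$ has dimension three and all representations are nonabelian (so $\dim B_c^1 = \dim \mathfrak{su}(2) = 3$), I expect $\dim \ker c_0 = 6$. Then I would compute $c_1$ (the first-order contribution comes purely from the perturbation constraint, since all other constraints are $\epsilon$-independent) and evaluate $\ker c_0 \cap \ker c_1$ and $c_1(\ker c_0) \cap \im c_0$. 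The goal is to obtain
\begin{align*}
\dim(\ker c_0 \cap \ker c_1) + \dim(c_1(\ker c_0) \cap \im c_0) \;\le\; 5,
\end{align*}
so that Theorem \ref{theorem:c-taylor} yields $\dim Z_c^1(\Gamma; \Ad \rho_\epsilon) \le 5$.

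Once this bound is in hand, the result follows quickly. By Theorem \ref{theorem:intro-R-nat-pi} every representation in $R_\pi^\natural(S^1\times D^2,A_1)$ is nonabelian, so $\Stab(\rho_\epsilon) = Z(SU(2)) = \{\pm 1\}$ and $\dim B_c^1(\Gamma; \Ad \rho_\epsilon) = 3$. Therefore
\begin{align*}
\dim H_c^1(\Gamma; \Ad \rho_\epsilon)
= \dim Z_c^1 - \dim B_c^1 \le 5 - 3 = 2 = \dim R_\pi^\natural(S^1 \times D^2, A_1).
\end{align*}
Combined with the general upper semicontinuity inequality $\dim_{[\rho_\epsilon]} R_\pi^\natural(S^1\times D^2,A_1) \le \dim H_c^1(\Gamma; \Ad \rho_\epsilon)$, this forces equality and thus regularity at every point.

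The main obstacle will be the bookkeeping in step two: the constraint on $\rho(\lambda_P)$ and $\rho(\mu_P)$ with $\lambda_P = h^{-1}A$, $\mu_P = B$ introduces a nontrivial coupling between the deformation variables attached to $A$, $B$, and $h$, and I will need to verify that the ``extra'' infinitesimal directions allowed by $c_0$ but killed by $c_1$ account for precisely the drop from $\dim Z_c^1(\rho_0) = 6$ to $\dim Z_c^1(\rho_\epsilon) \le 5$. A useful simplification is that it suffices to check the bound at a single representative per orbit of the $SU(2)$-action, so I would fix $\rho_\epsilon$ in the normal form of Theorem \ref{theorem:R-nat-pi} and carry out the linear algebra parametrized by $(\phi,\theta)$, handling the special locus $y = 0$ (the pillowcase image of Theorem \ref{theorem:R-nat-pi-immersion-2}) separately if the rank of $c_0$ degenerates there.
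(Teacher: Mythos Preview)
Your proposal is correct and follows essentially the same strategy as the paper: set up the constrained cocycle map $c_\epsilon$, apply Theorem~\ref{theorem:c-taylor} to bound $\dim Z_c^1(\Gamma;\Ad\rho_\epsilon)\le 5$, and then subtract $\dim B_c^1 = 3$ (using nonabelianity) to match $\dim R_\pi^\natural(S^1\times D^2,A_1)=2$. The only difference is that the paper first eliminates the generators $b$ and $w$ using the relations (substituting $w=-1$ and $b=-ha^{-1}h^{-1}$), working with the four generators $\{a,A,B,h\}$, a single $SU(2)$-valued relation $[h,aB]=-1$, and six real constraints, so that $c_\epsilon:\Reals^{12}\to\Reals^{9}$; your unreduced six-generator setup is equivalent but carries more redundant linear algebra.
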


\begin{proof}
Using results from the proof of Theorem \ref{theorem:R-nat-pi}, we
find that we can take the set of generators for the fundamental group
$\Gamma$ to be $S = \{a, A, B, h\}$, the relations function
$F_r:\Hom(\langle S \rangle,SU(2)) \rightarrow SU(2)$ to be
\begin{align}
  \nonumber
  F_r(\rho) = -\rho([h,aB]),
\end{align}
and the constraint function
$F_c:\Hom(\langle S \rangle,SU(2)) \rightarrow \Reals^6$ to be
\begin{align}
  \nonumber
  F_c(\rho) &=
  (\tr(\rho(a)),\,\tr(\rho(ha^{-1}h^{-1})),\,\tr(\rho(h)),\,
  f(\rho,\qx),\,f(\rho,\qy),\,f(\rho,\qz)),
\end{align}
where
\begin{align}
  \nonumber
  f(\rho,q) &= \epsilon \tr(\rho(h^{-1}A)q) - \tr(\rho(B)q).
\end{align}
Using the expressions for the homomorphisms
$\rho_\epsilon:\Gamma \rightarrow SU(2)$ given in the proof of
Theorem \ref{theorem:R-nat-pi}, we obtain a linear map
$c_\epsilon:\Reals^{12} \rightarrow \Reals^9$.
We now apply Theorem \ref{theorem:c-taylor}.
A straightforward, but rather lengthy, calculation shows that
$\dim (\ker c_0 \cap \ker c_1) +
\dim (c_1(\ker c_0) \cap \im c_0) = 5$ for all homomorphisms
representing points in $R_\pi^\natural(U_1,A_1)$.
Since these homomorphisms are all nonabelian, we conclude that
$\dim H_c^1(\Gamma; \Ad \rho) =
\dim R_\pi^\natural(U_1,A_1) = 2$ for all
$[\rho] \in R_\pi^\natural(U_1,A_1)$, and thus 
$R_\pi^\natural(U_1,A_1)$ is regular everywhere.
\end{proof}

\begin{theorem}
\label{theorem:reg-R2}
The image $L_1^\pi$ of the immersion
$R_\pi^\natural(U_1, A_1) \rightarrow R(T^2,2)$ lies in the regular
locus of $R(T^2,2)$.
\end{theorem}

\begin{proof}
Using results from Section \ref{sec:R2}, we find that we can take the
set of generators for the fundamental group $\Gamma$ to be
$S = \{a, A, B\}$, with no relations, and we can take the constraint
function $F_c:\Hom(\langle S \rangle, SU(2)) \rightarrow \Reals^2$ to
be
\begin{align}
  \nonumber
  F_c(\rho) = (\tr(\rho(a)),\, \tr(\rho(ABA^{-1}B^{-1}a))).
\end{align}
Using results from the proof of Theorem
\ref{theorem:R-nat-pi}, we obtain a linear map
$c_\epsilon:\Reals^9 \rightarrow \Reals^2$ for homomorphisms
representing points in $L_1^\pi$.
A straightforward, but rather lengthy, calculation shows that
$\dim (\ker c_0 \cap \ker c_1) +
\dim (c_1(\ker c_0) \cap \im c_0) = 7$ for all homomorphisms
representing points in $L_1^\pi$.
Since these homomorphisms are all nonabelian, we conclude that
$\dim H_c^1(\Gamma; \Ad \rho) = \dim R(T^2,2) = 4$.
\end{proof}

We conjecture that $R(T^2,2)$ is in fact regular at all points
represented by nonabelian homomorphisms, but Theorem
\ref{theorem:reg-R2} will suffice for our purposes.

\subsection{Transversality}

We are now ready to prove our key result that relates nondegeneracy to
transversality.
Recall that we defined the Lagrangian $L_2$ to be the image of
$R(U_2,A_2) \rightarrow R(T^2,2)$.
If $R(U_2,A_2) \rightarrow R(T^2,2)$ is injective, and
$[\rho] \in L_1^\pi \cap L_2 \subset R(T^2,2)$ is not the double-point of
$L_1^\pi$, then by Corollary \ref{cor:glue}
the point $[\rho]$ is the image of a unique point in
$R_\pi^\natural(Y,K)$ under the pullback map
$R_\pi^\natural(Y,K) \rightarrow R(T^2,2)$.
The following is a restatement of Theorem
\ref{theorem:intro-transverse} from the Introduction:

\begin{theorem}
Suppose $R(U_2,A_2) \rightarrow R(T^2,2)$ is an injective
immersion and $[\rho] \in L_1^\pi \cap L_2$
is the image of a regular point of $R(U_2,A_2)$ and is not the
double-point of $L_1^\pi$.
Then the unique preimage of $[\rho]$
under the pullback map $R_\pi^\natural(Y,K) \rightarrow R(T^2,2)$ is
nondegenerate if and only if the intersection of $L_1^\pi$ with $L_2$ at
$[\rho] \in L_1^\pi \cap L_2$ is transverse.
\end{theorem}

\begin{proof}
We introduce the notation
$K' = K \cup W \cup H \cup P$,
$Y' = Y - K'$, $U_i' = U_i - K'$, and
$\Sigma' = T^2 - \{p_1,p_2\}$.
We have the following Mayer-Vietoris sequence:
\begin{eqnarray}
  \nonumber
  \begin{tikzcd}[column sep=0.7cm]
    \cdots \arrow{r} &
    H_c^0(\Sigma'; \Ad \rho) \arrow{r} &
    H_c^1(Y'; \Ad \rho) \arrow{r} &
    H_c^1(U_1'; \Ad \rho) \oplus H_c^1(U_2'; \Ad\rho) \arrow{r} &
    H_c^1(\Sigma'; \Ad\rho) \arrow{r} &
    \cdots.
  \end{tikzcd}
\end{eqnarray}
Here $H_c^0(\Sigma'; \Ad \rho)$ is
\begin{align}
  \nonumber
  H_c^0(\Sigma'; \Ad \rho) =
  \{x \in \mathfrak{g} \mid
  \textup{$[\rho(\lambda), x] = 0$ for all
    $\lambda \in \pi_1(\Sigma')$}\},
\end{align}
and $H_c^1(Y'; \Ad \rho)$, $H_c^1(U_1'; \Ad \rho)$,
$H_c^1(U_2'; \Ad\rho)$, $H_c^1(\Sigma'; \Ad\rho)$ are the constrained
group cohomology for the character varieties
$R_\pi^\natural(Y,K)$, $R_\pi^\natural(U_1,A_1)$,
$R(U_2,A_2)$, and $R(T^2,2)$, respectively.
For notational simplicity, we are using $\rho$ to denote a
homomorphism representing a point in $R_\pi^\natural(Y,K)$, as
well as its pullbacks to homomorphisms representing points in
$R_\pi^\natural(U_1,A_1)$, $R(U_2,A_2)$, and $R(T^2,2)$.
From Theorem \ref{theorem:L1} we have that all points in $L_1^\pi$ are
represented by nonabelian homomorphisms, thus
$H_c^0(\Sigma'; \Ad \rho) = 0$.
From Theorems \ref{theorem:reg-R-nat-pi} and \ref{theorem:reg-R2}, we
have the identifications
\begin{align}
  \nonumber
  H_c^1(U_1';\Ad \rho) &= T_{[\rho]}R_\pi^\natural(U_1,A_1), &
  H_c^1(\Sigma'; \Ad\rho) &= T_{[\rho]}R(T^2,2).
\end{align}
Since we have assumed that
$[\rho] \in R(U_2,A_2)$ is regular, we have the
identification
\begin{align}
  \nonumber
  H_c^1(U_2';\Ad \rho) &= T_{[\rho]}R(U_2,A_2).
\end{align}
By Theorem \ref{theorem:L1}, the map
$R_\pi^\natural(U_1,A_1) \rightarrow R(T^2,2)$ is an
immersion (with image $L_1^\pi$), and we have assumed that
$R(U_2,A_2) \rightarrow R(T^2,2)$ is an immersion (with image $L_2$),
so we can identify
\begin{align}
  \nonumber
  T_{[\rho]}R_\pi^\natural(U_1,A_1) &= T_{[\rho]}L_1^\pi, &
  T_{[\rho]}R(U_2,A_2) &= T_{[\rho]}L_2.
\end{align}
We conclude that the constrained group cohomology
$H_c^1(Y'; \Ad \rho)$ is given by
\begin{align}
  \nonumber
  H_c^1(Y'; \Ad \rho) = T_{[\rho]}{\mathcal R}_\pi^\natural(Y,K) = 
  T_{[\rho]}L_1^\pi \cap T_{[\rho]}L_2.
\end{align}
The constrained  group cohomology $H_c^1(Y'; \Ad \rho)$ is zero if and
only if $[\rho]$ is nondegenerate (see \cite{Donaldson} Section 2.5.4).
Thus $[\rho]$ is nondegenerate if and only if
$L_1^\pi$ intersects $L_2$ transversely at $[\rho]$.
\end{proof}

\begin{example}
Consider the algebraic functions $f,g:\Reals \rightarrow \Reals$,
$f(x) = x^2$, $g(x) = x^3$.
The schemes corresponding to the critical loci of $f$ and $g$ are
$\Spec F = \{(0)\}$ and $\Spec G = \{(x)\}$, where
\begin{align}
  \nonumber
  F &= \Reals[x]/(f'(x)) = \Reals, &
  G &= \Reals[x]/(g'(x)) = \Reals[x]/(x^2).
\end{align}
The fact that 0 is a nondegenerate critical point of $f$, but a
degenerate critical point of $g$, is reflected in the fact that $F$ is
reduced, but $G$ is nonreduced, which in turn is reflected in the fact
that $T_{(0)}\Spec F = 0$, but $T_{(x)}\Spec G = \Reals$.
\end{example}

Since nondegeneracy is a stable property, for sufficiently small
$\epsilon > 0$ we can use the function
$f(\phi) = \sin\phi$ to define the perturbation, rather than
the function $f(\phi)$ given in equation (\ref{eqn:f-alg}).

\section{The group $\MCG_2(T^2)$ and its action on $R(T^2,2)$}
\label{sec:mcg}

An important property of the character variety $R(T^2,2)$ is that it
admits an action of the mapping class group $\MCG_2(T^2)$.
Here we describe the group $\MCG_2(T^2)$ and its action on $R(T^2,2$).

\subsection{The mapping class group $\MCG_2(T^2)$}

\begin{definition}
Given a surface $S$ and $n$ distinct marked points
$p_1, \cdots, p_n \in S$, we define
the \emph{mapping class group $\MCG_n(S)$} to be the group
of isotopy classes of orientation-preserving homeomorphisms of $S$
that fix $\{p_1, \cdots, p_n\}$ as a set.
\end{definition}

Presentations for mapping class groups are described in
\cite{Cattabriga,Gervais,Labruere}.
The mapping class group $\MCG_2(T^2)$ for the twice-punctured torus is
generated by Dehn twists
$T_a$, $T_A$, $T_b$, and $T_B$ around the simple closed curves
$a$, $A$, $b$, and $B$ shown in Figure \ref{fig:pmcg2}, together with
a $\pi$-rotation $\omega$ of the square shown in Figure \ref{fig:pmcg2}.
The mapping class group $\MCG(T^2) := \MCG_0(T^2)$ for the unpunctured
torus is generated by the Dehn twists $T_a$ and $T_b$.

\begin{figure}
  \centering
  \includegraphics[scale=0.7]{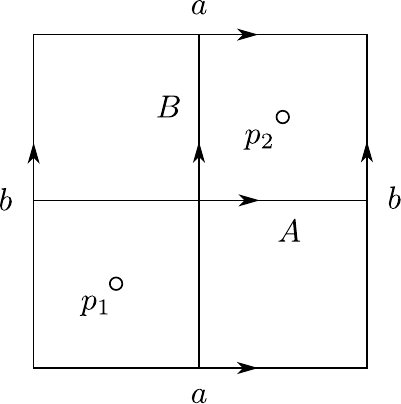}
  \caption{
    \label{fig:pmcg2}
    Cycles $a$, $A$, $b$, and $B$ corresponding to generators
    $T_a$, $T_A$, $T_b$, and $T_B$ of $\MCG_2(T^2)$.
  }
\end{figure}

It is useful to relate the mapping class groups $\MCG_2(T^2)$ and
$\MCG(T^2)$ to the braid group $B_2(T^2)$, which we define as follows:

\begin{definition}
Given a surface $S$, we define the
\emph{configuration space for ordered points $\Conf_n'(S)$} to be
the space
$\{(p_1, \cdots, p_n) \in S^n \mid \textup{$p_i \neq p_j$ if
$i\neq j$}\}$.
We define the
\emph{configuration space for unordered points $\Conf_n(S)$} to be
the space $\Conf_n'(S)/\Sigma_n$, where the fundamental group on $n$
letters $\Sigma_n$ acts on $\Conf_n'(S)$ by permutation.
\end{definition}

\begin{definition}
Given a surface $S$ and $n$ distinct marked points
$p_1, \cdots, p_n \in S$, we define
the \emph{braid group $B_n(S)$} to be the fundamental group of
$\Conf_n(S)$ with base point $[(p_1, \cdots, p_n)]$.
\end{definition}

Presentations for braid groups are described in \cite{Bellingeri}.
The braid group $B_2(T^2)$ for the twice-punctured torus is generated
by braids $\alpha_i$ and $\beta_i$ for $i=1,2$ that drag marked
the point $p_i$ rightward and upward around a cycle, together with a
braid $\sigma$ that interchanges the marked points $p_1$ and $p_2$ via
a counterclockwise $\pi$-rotation.
These generators are depicted in Figure \ref{fig:b2}.

\begin{figure}
  \centering
  \includegraphics[scale=0.7]{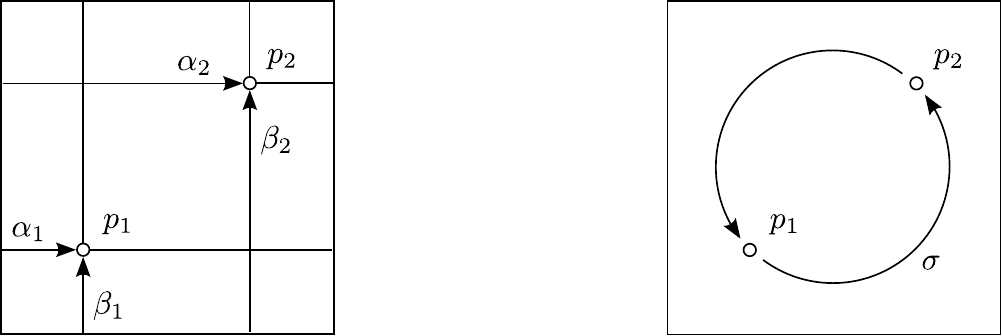}
  \caption{
    \label{fig:b2}
    (Left)
    Generators $\alpha_1$, $\alpha_2$, $\beta_1$, and $\beta_2$ of
    $B_2(T^2)$.
    (Right)
    Generator $\sigma$ of $B_2(T^2)$.
  }
\end{figure}

The braid group $B_2(T^2)$ and the mapping class groups $\MCG_2(T^2)$
and $\MCG(T^2)$ are related by the Birman exact sequence
\cite{Birman}:
\begin{eqnarray}
  \label{seq:birman}
  \begin{tikzcd}
    1 \arrow{r} &
    \pi_1(\Homeo_0(T^2)) \arrow{r} &
    B_2(T^2) \arrow{r}{p} &
    \MCG_2(T^2) \arrow{r}{g} &
    \MCG(T^2) \arrow{r} &
    1.
  \end{tikzcd}
\end{eqnarray}
Here $\Homeo_0(T^2)$ is the group of orientation-preserving
homeomorphisms of $T^2$ that are isotopic to the identity.
The group $\Homeo_0(T^2)$ deformation retracts onto
$T^2$ \cite{Hamstrom}, so $\pi_1(\Homeo_0(T^2)) = \pi_1(T^2) = \Ints^2$.
The two free abelian generators of $\pi_1(\Homeo_0(T^2))$ can be
identified with the elements $\alpha_1\alpha_2$ and $\beta_1\beta_2$
of $B_2(T^2)$ under the injection
$\pi_1(\Homeo_0(T^2)) \rightarrow B_2(T^2)$.
The push homomorphism $p:B_2(T^2) \rightarrow \MCG_2(T^2)$ is
given by
\begin{align}
  \nonumber
  p(\alpha_1) &= p(\alpha_2)^{-1} = T_a T_A^{-1}, &
  p(\beta_1) &= p(\beta_2)^{-1} = T_b T_B^{-1}, &
  p(\sigma) &= (T_a T_b^{-1} T_a)^2 \omega.
\end{align}
The forgetful homomorphism $g:\MCG_2(T^2) \rightarrow \MCG(T^2)$ is
given by
\begin{align}
  \nonumber
  g(T_a) = g(T_A) = T_a, &&
  g(T_b) = g(T_B) = T_b, &&
  g(\omega) = (T_a T_b^{-1} T_a)^2.
\end{align}
In what follows we will use the generators of $B_2(T^2)$ to also
denote their images in $\MCG_2(T^2)$ under
$p:B_2(T^2) \rightarrow \MCG_2(T^2)$.

We will use elements of the group $\MCG_2(T^2)$ to describe gluing
data for constructing $(1,1)$-knots.
By definition, a $(1,1)$-knot $K$ in a lens space $Y$ can be obtained
by gluing together two copies of a solid torus containing an unknotted
arc via a homeomorphism that represents an element $f \in \MCG_2(T^2)$.
The Birman sequence is useful for understanding the relationship
between elements $f \in \MCG_2(T^2)$ and the corresponding pairs
$(Y,K)$.
The lens space $Y$ can be recovered from the image of $f$ under
$g:\MCG_2(T^2) \rightarrow \MCG(T^2)$, so this map can be viewed as
forgetting the part of the gluing data used to construct the knot and
preserving the part of the data used to construct the lens space.
If we multiply $f$ by an element in the image of the map
$p:B_2(T^2) \rightarrow \MCG_2(T^2)$, the resulting element
$f' \in \MCG_2(T^2)$ yields a pair $(Y,K')$ consisting of a
potentially different knot $K'$ in the same lens space $Y$.
The braid group $B_2(T^2)$ is thus useful for constructing different
knots in a fixed lens space.

\subsection{The action of $\MCG_2(T^2)$ on $R(T^2,2)$}
\label{ssec:action}

We will define an action of the group $\MCG_2(T^2)$ on the character
variety $R(T^2,2)$ via a homomorphism from
$\MCG_2(T^2)$ to $\Out(\pi_1(T^2 - \{p_1, p_2\}))$, the group of outer
automorphisms of $\pi_1(T^2 - \{p_1, p_2\})$.
In general, we define a group homomorphism from $\MCG_n(T^2)$ to
$\Out(\pi_1(T^2 - \{p_1, \cdots, p_n\}))$, the group of outer
automorphisms of $\pi_1(T^2 - \{p_1, \cdots, p_n\})$, as follows.
Define $X = T^2 - \{p_1, \cdots, p_n\}$.
Choose a base point $x_0 \in X$ and consider the fundamental group
$\pi_1(X,x_0)$.
Given an element $[\phi] \in \MCG_n(X)$ represented by a homeomorphism
$\phi:X \rightarrow X$, there is an induced isomorphism
$\phi_*:\pi_1(X,x_0) \rightarrow \pi_1(X,\phi(x_0))$,
$[\alpha] \mapsto [\phi \circ \alpha]$.
Choose a path $\gamma:I \rightarrow X$ from $x_0$ to $\phi(x_0)$; this
induces an isomorphism
$\gamma_*:\pi_1(X,\phi(x_0)) \rightarrow \pi_1(X,x_0)$,
$[\alpha] \mapsto [\gamma\alpha\bar{\gamma}]$.
We now define a map
$\MCG_n(T^2) \rightarrow \Out(\pi_1(X,x_0))$ by
$[\phi] \mapsto [\gamma_* \phi_*]$.
One can show that this map is well-defined and is a homomorphism
(see \cite{Farb} Chapter 8.1).
In particular, the map is independent of the choice of the path
$\gamma$, since if we choose a different path $\gamma'$ then the
automorphisms $\gamma_*\phi_*$ and $\gamma_*' \phi_*$ of
$\pi_1(X,x_0)$ differ by the inner automorphism corresponding to
conjugation by $[\gamma'\bar{\gamma}] \in \pi_1(X,x_0)$.

\begin{remark}
A version of the Dehn-Nielsen-Baer theorem states
that the homomorphism
$\MCG_n(T^2) \rightarrow \Out(\pi_1(T^2 - \{p_1, \cdots, p_n\}))$ is
injective (see \cite{Farb} Theorem 8.8), and one can use this result
to obtain the expressions for the homomorphisms $p$ and $g$ in the
Birman sequence (\ref{seq:birman}).
\end{remark}

We define a right action of $\MCG_2(T^2)$ on the character variety
$R(T^2,2)$ by
\begin{align}
  \nonumber
  [\rho] \cdot f = [\rho \circ \tilde{f}],
\end{align}
where  $[\rho] \in R(T^2,2)$, $f \in \MCG_2(T^2)$, and
$\tilde{f} \in \Aut(\pi_1(T^2 - \{p_1, p_2\}))$ is a representative of
the image of $f$ under the homomorphism
$\MCG_2(T^2) \rightarrow \Out(\pi_1(T^2 - \{p_1, p_2\}))$.
We find that the action of $\MCG_2(T^2)$ on $R(T^2,2)$ is given by
\begin{align}
  \nonumber
  &[A,\, B,\, a,\, b] \cdot T_a =
  [A,\, BA,\, a,\, b], \\
  \nonumber
  &[A,\, B,\, a,\, b] \cdot T_b =
  [AB,\, B,\, a,\, b], \\
  \nonumber
  &[A,\, B,\, a,\, b] \cdot T_A =
  [A,\, aAB,\, a,\, Aaba^{-1}A^{-1}], \\
  \nonumber
  &[A,\, B,\, a,\, b] \cdot T_B =
  [a^{-1}BA,\, B,\, a,\, a^{-1}BbB^{-1}a], \\
  \nonumber
  &[A,\, B,\, a,\, b] \cdot \omega =
  [A^{-1},\, B^{-1},\, B^{-1}A^{-1}bAB,\, A^{-1}B^{-1}aBA].
\end{align}
The action of $\MCG_2(T^2)$ on $R(T^2,2)$ fixes the reducible locus
$\partial P_3$ of $R(T^2,2)$ as a set.
The homomorphism $p:B_2(T^2) \rightarrow \MCG_2(T^2)$ in the Birman
sequence (\ref{seq:birman}) induces a right action of
$B_2(T^2)$ on $R(T^2,2)$.

\section{Examples}
\label{sec:examples}

We will now compute generating sets for $I^\natural(Y,K)$ for several
example $(1,1)$-knots $K$ in lens spaces $Y$.
As described in the Introduction, we Heegaard-split $(Y,K)$ into
a pair of handlebodies $(U_1,A_1)$ and $(U_2,A_2)$.
The handlebodies are glued together via a homeomorphism
$\phi:(\partial U_1, \partial A_1) \rightarrow
(\partial U_2, \partial A_2)$,
which defines an element $f = [\phi]$ of the mapping class group
$\MCG_2(T^2)$.
We define a character variety $R(T^2,2)$ corresponding to the Heegaard
surface $(T^2,\{p_1,p_2\}) := (\partial U_1, \partial A_1)$,
and we define Lagrangians $L_1^\pi$ and  $L_2 = L_1 \cdot f$ in $R(T^2,2)$
corresponding to the handlebodies $(U_1,A_1)$ and $(U_2,A_2)$.
To obtain a generating set for $I^\natural(Y,K)$, we count the
intersection points $L_1^\pi \cap L_2$ and show that the intersection is
transverse at each point.
The calculations needed to accomplish this task rely on
the parameterizations
$L_1^\pi(\phi,\theta)$ and $L_1(\chi,\psi)$ of the Lagrangians $L_1^\pi$ and
$L_1$ given in Theorems
\ref{theorem:L1} and \ref{theorem:Ld}, together with the description
of the action of $\MCG_2(T^2)$ on $R(T^2,2)$ given in Section
\ref{ssec:action}.
To describe the intersection, we will use the coordinates
$(\hat{a},\hat{b})$ that we defined on the piece
$P_4 \subset R(T^2,2)$ in Section \ref{sssec:P4},
and the coordinates $(\alpha,\beta,\gamma)$ that we defined on the
piece $P_3 \subset R(T^2,2)$ in Section \ref{sssec:P3}.

\subsection{Trefoil in $S^3$}
\label{sec:trefoil-s3}

As shown in Figure \ref{fig:make-trefoil},
we can construct the trefoil in $S^3$ by gluing the two handlebodies
together using the mapping class group element
$f = s\beta_1\alpha_1^{-1} \in \MCG_2(T^2)$, where
$s := T_aT_b^{-1}T_a$ exchanges the longitude and meridian of $T^2$.
We first prove a Lemma that constrains the possible intersection
points of $L_1^\pi$ and $L_2 = L_1\cdot f$:

\begin{figure}
  \centering
  \includegraphics[scale=0.7]{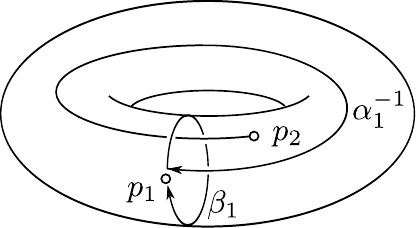}
  \caption{
    \label{fig:make-trefoil}
    The trefoil in $S^3$ is constructed by gluing together $(U_1,A_1)$
    and $(U_2,A_2)$ using the mapping class group element
    $f = s\beta_1\alpha_1^{-1}$.
  }
\end{figure}

\begin{lemma}
\label{lemma:trefoil}
If $L_1^\pi(\phi,\theta) = L_2(\chi,\psi)$, then $\chi = \pi/2$ and
either $\theta \in \{\pi/2,3\pi/2\}$ or $\phi \in \{0,\pi\}$
\end{lemma}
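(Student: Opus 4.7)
The plan is to compute a representative of $L_d(\chi,\psi)\cdot f$ explicitly using the right-action formulas from Section~\ref{sec:mcg}, and then test it against the necessary trace conditions on $L_s$ recorded in Remark~\ref{remark:tr-Ls}. From equations~(\ref{eqn:R-a-angle})--(\ref{eqn:R-B-angle}), $L_d(\chi,\psi)$ is represented by
\[
  A_0 = e^{i\chi\sigma_z}, \quad B_0 = 1, \quad a_0 = i(\cos\psi\,\sigma_x + \sin\psi\,\sigma_z), \quad b_0 = a_0^{-1}.
\]

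The first task is to apply $s = T_aT_b^{-1}T_a$, then $\beta_1$, then $\alpha_1^{-1}$ in turn. The identity $B_0 = 1$ causes dramatic simplification at each stage: after $s$ the quadruple is $(1, A_0, a_0, a_0^{-1})$; after $\beta_1$ it is $(a_0, A_0, A_0 a_0 A_0^{-1}, a_0^{-1})$; and after $\alpha_1^{-1}$ I expect a representative $(A_f, B_f, a_f, b_f)$ of $L_d(\chi,\psi)\cdot f$ with
\[
  A_f = a_0, \qquad a_f = a_0^{-1}(A_0 a_0 A_0^{-1})\,a_0, \qquad B_f = a_f\,A_0
\]
(the formula for $b_f$ is not needed).

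Two trace identities then finish the argument. Since $a_0$ is traceless, $\tr A_f = 0$; combined with equation~(\ref{eqn:tr-A-Ls}) and the fact that $\cos\nu \neq 0$ for $\epsilon > 0$ sufficiently small, this forces $\cos\theta\sin\phi = 0$, i.e.\ $\theta \in \{\pi/2,\,3\pi/2\}$ or $\phi \in \{0,\pi\}$. For the condition on $\chi$, cyclicity gives $\tr a_f = \tr(A_0 a_0 A_0^{-1}) = \tr a_0 = 0$, so $a_f$ is a traceless element of $SU(2)$ and hence $a_f^2 = -I$; therefore
\[
  \tr(B_f a_f) = \tr(a_f A_0 a_f) = \tr(A_0 a_f^2) = -\tr A_0 = -2\cos\chi,
\]
while $\tr(Ba)$ vanishes identically on $L_s$ by equation~(\ref{eqn:tr-Ba-Ls}), forcing $\cos\chi = 0$ and hence $\chi = \pi/2$.

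The only real work is the bookkeeping in the first step---keeping track of three successive right-actions and exploiting $B_0 = 1$ aggressively to collapse intermediate expressions. After that, everything falls out of elementary $SU(2)$ trace identities, so this is the expected main obstacle.
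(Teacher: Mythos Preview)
Your proposal is correct and is essentially the same argument as the paper's: both proofs use the two trace functions $\tr A$ and $\tr Ba$, showing that the first vanishes on $L_d\cdot f$ (forcing $\cos\theta\sin\phi=0$ via equation~(\ref{eqn:tr-A-Ls})) and the second equals $-2\cos\chi$ on $L_d\cdot f$ (forcing $\chi=\pi/2$ via equation~(\ref{eqn:tr-Ba-Ls})). The only difference is cosmetic: the paper simply asserts the values $h_1(L_d(\chi,\psi)\cdot f)=0$ and $h_2(L_d(\chi,\psi)\cdot f)=-2\cos\chi$, whereas you derive them by explicitly tracking the representative through the three successive right-actions, which is a perfectly reasonable way to justify those values.
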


\begin{proof}
Define functions $h_1, h_2:R(T^2,2) \rightarrow \Reals$ by
\begin{align}
  \nonumber
  h_1([A,B,a,b]) &= \tr A, &
  h_2([A,B,a,b]) &= \tr Ba.
\end{align}
We evaluate the functions $h_1$ and $h_2$ at the points
$L_1^\pi(\phi,\theta)$ and $L_2(\chi,\psi)$.
If we require that each function give the same value at both points,
we obtain the desired result.
\end{proof}

\begin{theorem}
\label{theorem:example-trefoil}
The rank of $I^\natural(S^3,K)$ for the trefoil $K$ in $S^3$ is at most 3.
\end{theorem}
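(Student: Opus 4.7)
The plan is to use the framework of Theorem \ref{theorem:intro-transverse} and Corollary \ref{cor:glue}: once we show that $|L_s \cap L_2| \leq 3$ with transverse intersection, the pullback $R_\pi^\natural(S^3,K) \to R(T^2,2)$ is injective with image $L_s \cap L_2$, giving a generating set of size at most $3$, hence bounding the rank of the singular instanton homology.

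First I would compute the action of $f = s\beta_1\alpha_1^{-1}$ on the explicit representative of $L_d(\chi,\psi)$ given by equations (\ref{eqn:R-a-angle})--(\ref{eqn:R-B-angle}), by applying the formulas for $\alpha_1$, $\beta_1$, and $s = (T_aT_b^{-1}T_a)\cdot(\text{sign of }\omega\text{ action})$ on quadruples $(A,B,a,b)$ from Section \ref{sec:mcg}. This yields an explicit quadruple $(A',B',a',b')$ depending on $(\chi,\psi)$ representing $L_d(\chi,\psi)\cdot f$. Invoking the preceding lemma, we may restrict to $\chi = \pi/2$, so $A = i\sigma_z$, which simplifies the computation drastically. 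We then split into two cases based on the lemma.

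In Case 1 ($\theta \in \{\pi/2, 3\pi/2\}$), I would equate further trace functions on $R(T^2,2)$ — for example $\mu$, $\tr(AB)$, and $\tr(Ab)$ — evaluated on $L_s(\phi,\theta)$ via Remark \ref{remark:tr-Ls} against those on the explicit $L_d(\pi/2,\psi)\cdot f$. Because $\epsilon$ is small, these become transcendental equations of the form $\sin\phi \approx$ algebraic function of $\psi$ and $\cos 2\psi \approx \pm 1$, which I expect to have a finite, explicitly countable set of solutions in the prescribed ranges. In Case 2 ($\phi \in \{0,\pi\}$), the sphere Lagrangian $L_s$ passes through the single pillowcase point $(\alpha,\beta,\gamma)=(\pi/2,0,0)$ of $P_3$ by Theorem \ref{theorem:R-nat-pi-immersion-2}, so I only need to test whether $L_d(\pi/2,\psi)\cdot f$ ever lands at this point and, if so, whether the two preimages $\phi=0$ and $\phi=\pi$ both contribute (they map to the same image, so only one generator arises). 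Combining the two cases and deleting any contribution from the double-point of $L_s$, I expect exactly three intersection points.

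Finally, at each of the three intersection points I would verify transversality by computing $T_{[\rho]} L_s$ and $T_{[\rho]} L_2$ as two-dimensional subspaces of $T_{[\rho]} R(T^2,2)$ using the smooth immersions from Theorem \ref{theorem:immersion-R-nat-pi} and the image of $L_d \cdot f$ under the pushforward of $f$, and checking that their sum spans the tangent space. Nondegeneracy of the corresponding points in $R_\pi^\natural(S^3,K)$ then follows from Theorem \ref{theorem:intro-transverse}, yielding $|R_\pi^\natural(S^3,K)| \leq 3$ and hence the rank bound.

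The main obstacle will be the bookkeeping in Case 1: the explicit matrix entries of $L_d(\pi/2,\psi)\cdot f$ after composing three mapping-class actions are cumbersome, and one must show that the resulting transcendental equations admit only the expected solutions for all sufficiently small $\epsilon > 0$. A secondary subtlety is carefully handling the non-immersive locus of $L_s$ (the poles $\phi \in \{0,\pi\}$ and the cases $\theta \in \{0,\pi\}$) to ensure no extra intersection points are missed and that the transversality check is valid at each counted intersection.
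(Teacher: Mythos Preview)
Your overall strategy matches the paper's: restrict to $\chi=\pi/2$ via the lemma, compute $L_d(\pi/2,\psi)\cdot f$ explicitly, count intersections with $L_s$, and verify transversality to invoke Theorem~\ref{theorem:intro-transverse}. The paper's execution of the count is cleaner than what you propose: rather than solving trace equations, it passes to the $(\hat{a},\hat{b})$ coordinates on $P_4\cong S^2\times S^2-\Delta$ from Theorem~\ref{theorem:space-P4} and observes that both $L_d(\pi/2,\psi)\cdot f$ and $L_s(\phi,\theta)|_{\theta\in\{\pi/2,3\pi/2\}}$ land in a common $2$-torus (the $\hat{a}_z=\hat{b}_z=0$ locus), reducing the problem to a one-dimensional intersection in a picture. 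Your guessed form ``$\cos 2\psi\approx\pm 1$'' is not what actually comes out; the equations are essentially linear relations between $\psi$ and $\phi$ modulo $2\pi$, up to $O(\epsilon)$.

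There is one genuine gap in your Case~2. You cannot simply ``delete any contribution from the double-point'' or count it as ``one generator'': Theorem~\ref{theorem:intro-transverse} explicitly excludes the double-point of $L_s$ from its hypotheses, so if $L_d\cdot f$ met $L_s$ there the entire framework would fail to produce a generating set, not just undercount by one. What you must do---and what the paper does---is verify that $L_d(\pi/2,\psi)\cdot f$ never lands at $(\alpha,\beta,\gamma)=(\pi/2,0,0)$. The paper handles this by determining exactly which values of $\psi$ send $L_d(\pi/2,\psi)\cdot f$ into $P_3$ (namely $\cos 3\psi=0$), listing the resulting four points, and checking that each has $\gamma\neq 0$, hence lies neither at the double-point nor anywhere else on $L_s\cap P_3$.
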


\begin{proof}
From Lemma \ref{lemma:trefoil}, we know that if
$L_1^\pi(\phi,\theta) = L_2(\chi,\psi)$ then $\chi = \pi/2$.
A calculation shows that
$L_2(\pi/2,\psi) = L_1(\pi/2,\psi) \cdot f = [A,B,a,b]$, where
\begin{align}
  \label{eqn:trefoil-1}
  A &= \qx, &
  B &= \sin 3\psi + \cos 3\psi\,\qz, \\
  \label{eqn:trefoil-2}
  a &=
  -\cos 2\psi\,\qx + \sin 2\psi\,\qy, &
  b &=
  -\cos 4\psi\,\qx - \sin 4\psi\,\qy.
\end{align}

We will first show that the intersection $L_1^\pi \cap L_2$ takes place
entirely in the piece $P_4$.
Suppose $L_2(\pi/2,\psi)$ lies in the piece $P_3$.
Then the matrices $A$ and $B$ in equation (\ref{eqn:trefoil-1}) must
commute, so $\cos 3\psi = 0$, corresponding to
$\psi \in \{\pm \pi/6, \pm \pi/2\}$.
From equations (\ref{eqn:trefoil-1}) and (\ref{eqn:trefoil-2}), we
find that
\begin{align}
  \nonumber
  \gamma(L_2(\pi/2,\pm \pi/6)) &= -\pi/6, &
  \gamma(L_2(\pi/2,\pm\pi/2)) &= \pi/2.
\end{align}
But Theorem \ref{theorem:L1} states that all of
the points in $L_1^\pi \cap P_3$ have $\gamma=0$.
It follows that $L_1^\pi$ does not intersect $L_2$ in the piece $P_3$.

We now consider the intersection $L_1^\pi \cap L_2$ in the piece $P_4$.
Using equations (\ref{eqn:trefoil-1}) and (\ref{eqn:trefoil-2}), we
find that the $(\hat{a},\hat{b})$ coordinates of $L_2(\pi/2,\psi)$ are
\begin{align}
  \label{eqn:trefoil-d1}
  \hat{a}(L_2(\pi/2,\psi)) &=
  (-\cos 2\psi,\, \sin 2\psi,\, 0), &
  \hat{b}(L_2(\pi/2,\psi)) &=
  (-\cos 4\psi,\, -\sin 4\psi,\, 0)
\end{align}
for $\psi \in (-\pi/6,\pi/6)$, and
\begin{align}
  \label{eqn:trefoil-d2}
  \hat{a}(L_2(\pi/2,\psi)) &=
  (-\cos 2\psi,\, -\sin 2\psi,\, 0), &
  \hat{b}(L_2(\pi/2,\psi)) &=
  (-\cos 4\psi,\, \sin 4\psi,\, 0)
\end{align}
for $\psi \in (-\pi/2,-\pi/6) \cup (\pi/6,\pi/2)$.
From Lemma \ref{lemma:trefoil}, we know that either
$\theta \in \{\pi/2,3\pi/2\}$ or $\phi \in \{0,\pi\}$.
But $\phi=0$ and $\phi=\pi$ correspond to the double-point of $L_2$,
which lies in $P_3$, and we have already shown that $L_1^\pi$ does not
intersect $L_2$ in $P_3$.
Thus $\theta \in \{\pi/2,3\pi/2\}$.
Substituting $\theta=\pi/2$ and $\theta=3\pi/2$ into
the expressions for the $(\hat{a},\hat{b})$ coordinates of
$L_1^\pi(\phi,\theta)$ given in Theorem \ref{theorem:L1}, we find that
\begin{align}
  \label{eqn:trefoil-s1}
  &\hat{a}(L_1^\pi(\phi,\pi/2)) =
  (-\sin(\phi+\nu),\, -\cos(\phi+\nu),\, 0), &
  &\hat{b}(L_1^\pi(\phi,\pi/2)) =
  (\sin(\phi-\nu),\, \cos(\phi-\nu),\, 0), \\
  \label{eqn:trefoil-s2}
  &\hat{a}(L_1^\pi(\phi,3\pi/2)) =
  (\sin(\phi+\nu),\, \cos(\phi+\nu),\, 0), &
  &\hat{b}(L_1^\pi(\phi,3\pi/2)) =
  (-\sin(\phi-\nu),\, -\cos(\phi-\nu),\, 0).
\end{align}
From equations (\ref{eqn:trefoil-d1})--(\ref{eqn:trefoil-s2}),
it follows that the intersection
$L_1^\pi \cap L_2$ fact takes place in a torus
$T^2 - \bar{\Delta} \subset S^2 \times S^2 - \bar{\Delta}$, where
$\bar{\Delta} \subset T^2$ is the antidiagonal.
In Figure \ref{fig:trefoil-intersection} we use
equations (\ref{eqn:trefoil-d1})--(\ref{eqn:trefoil-s2}) to plot the
intersection of $L_1^\pi$ and $L_2$ in $T^2 - \bar{\Delta}$.
We see that $L_1^\pi$ and $L_2$ intersect in three points.

We will now show that the intersection is transverse at each
of these three points.
A calculation shows that at each point we have
\begin{align}
  \nonumber
  \partial_\phi h_1(L_1^\pi(\phi,\theta)) &= 0, &
  \partial_\theta h_1(L_1^\pi(\phi,\theta)) &\neq 0, &
  \partial_\chi h_1(L_2(\chi,\psi)) &= 0, &
  \partial_\psi h_1(L_2(\chi,\psi)) &= 0, \\
  \nonumber
  \partial_\phi h_2(L_1^\pi(\phi,\theta)) &= 0, &
  \partial_\theta h_2(L_1^\pi(\phi,\theta)) &= 0, &
  \partial_\chi h_2(L_2(\chi,\psi)) &\neq 0, &
  \partial_\psi h_2(L_2(\chi,\psi)) &= 0.
\end{align}
These equations, together with Figure \ref{fig:trefoil-intersection},
show that the intersection is transverse at each intersection point.
\end{proof}

For knots $K$ in $S^3$, one can show
(see \cite{Hedden-1}, Section 12.1) that
\begin{align}
  \nonumber
  \rank I^\natural(S^3,K) \geq \sum_i |a_i|,
\end{align}
where $a_i$ is the coefficient of $t^i$ in the Alexander polynomial
$\Delta_K(t)$ of $K$:
\begin{align}
  \nonumber
  \Delta_K(t) = \sum_i a_i t^i.
\end{align}
This inequality, together with Theorem
\ref{theorem:example-trefoil}, gives the singular instanton homology
for the trefoil.
This result was already known, since, as shown by Kronheimer and
Mrowka, the singular instanton homology of an alternating knot in
$S^3$ is isomorphic to the reduced Khovanov homology of its mirror
\cite{Kronheimer-3}.

\begin{figure}
  \centering
  \includegraphics[scale=0.7]{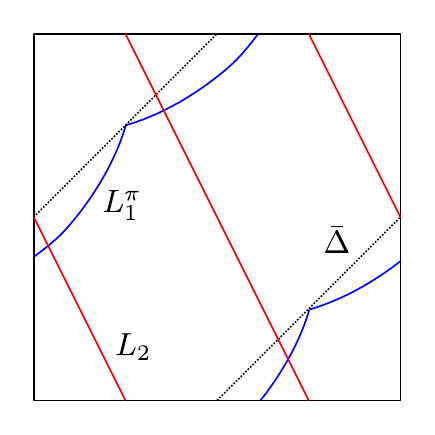}
  \caption{
    \label{fig:trefoil-intersection}
    The trefoil in $S^3$.
    The space depicted is
    $T^2 - \bar{\Delta} \subset S^2 \times S^2 - \bar{\Delta}$.
    Shown are the Lagrangian $L_1^\pi$, the Lagrangian $L_2$,
    and the antidiagonal $\bar{\Delta}$.
  }
\end{figure}

\subsection{Unknot in $L(p,1)$ for $p$ not a multiple of 4}
\label{sec:unknot}

We can construct the unknot $U$ in the lens space $L(p,1)$ by gluing
the two handlebodies together using the mapping class group element
$f = T_a^p \in \MCG_2(T^2)$.
The following is a restatement of Theorem \ref{theorem:intro-unknot}
from the Introduction:

\begin{theorem}
If $p$ is not a multiple of 4, then the rank of $I^\natural(L(p,1),U)$
for the unknot $U$ in the lens space $L(p,1)$ is at most $p$.
\end{theorem}

\begin{proof}
A calculation shows that
$L_2(\chi,\psi) = L_1(\chi,\psi) \cdot f = [A,B,a,b]$,
where
\begin{align}
  \label{eqn:unknot}
  A &= \cos \chi + \sin\chi\,\qz, &
  B &= \cos p\chi + \sin p\chi\,\qz, &
  a &= b^{-1} = \cos\psi\,\qx + \sin\psi\,\qz.
\end{align}
Since $A$ and $B$ commute, the Lagrangian $L_2$ lies
in the piece $P_3$.
From equation (\ref{eqn:unknot}), it follows that the
$(\alpha,\beta,\gamma)$ coordinates of the point $L_2(\chi,\psi)$ are
\begin{align}
  \nonumber
  \alpha(L_2(\chi,\psi)) &= \chi, &
  \beta(L_2(\chi,\psi)) &= p\chi, &
  \gamma(L_2(\chi,\psi)) &= \psi.
\end{align}
Comparing with the parameterization of $L_1^\pi$ in $P_3$ given in
Theorem \ref{theorem:L1}, we find that
the intersection $L_1^\pi \cap L_2$ in fact takes place in the
pillowcase $P_3 \cap \{\gamma=0\}$.
In Figure \ref{fig:unknot-lens-space} we plot the intersection of
$L_1^\pi$ with $L_2$ in the pillowcase $P_3 \cap \{\gamma=0\}$
for $p=1,2,3$.
We find that if $p$ is not a multiple of 4 then we obtain a generating
set with $p$ generators.
If $p$ is a multiple of $4$ then $L_1^\pi \cap L_2$ contains the
double-point $(\alpha,\beta,\gamma) = (\pi/2,0,0)$ of $L_1^\pi$, and thus
our scheme for counting generators fails.

We will now show that the intersection is transverse at each
intersection point.
Define functions
\begin{align}
  \nonumber
  h_1([A,B,a,b]) &= \tr Aa, &
  h_2([A,B,a,b]) &= \tr Ba.
\end{align}
A straightforward calculation shows that at each point of
$L_1^\pi \cap L_2$ we have that
\begin{align}
  \nonumber
  \partial_\phi h_1(L_1^\pi(\phi,\theta)) &= 0, &
  \partial_\theta h_1(L_1^\pi(\phi,\theta)) &\neq 0, &
  \partial_\chi h_1(L_2(\chi,\psi)) &= 0, &
  \partial_\psi h_1(L_2(\chi,\psi)) &\neq 0, \\
  \nonumber
  \partial_\phi h_2(L_1^\pi(\phi,\theta)) &= 0, &
  \partial_\theta h_2(L_1^\pi(\phi,\theta)) &= 0, &
  \partial_\chi h_2(L_2(\chi,\psi)) &= 0, &
  \partial_\psi h_2(L_2(\chi,\psi)) &\neq 0.
\end{align}
These equations,
together with Figure \ref{fig:unknot-lens-space}, show that the
intersection is transverse at each point of $L_1^\pi \cap L_2$.
\end{proof}

For the case $p=1$ we have that $L(p,1) = S^3$, and our results imply
that the unknot in $S^3$ has a generating set with a single generator.
Since there is a single generator, there are no differentials, and
this result amounts to a calculation of the singular instanton
homology.

\begin{remark}
It is interesting to note that for the unknot $U$ in the lens space
$Y = L(p,q)$, the knot Floer homology $\widehat{HFK}(Y,U)$ has rank
$p$ (see \cite{Hedden-0}).
\end{remark}

\begin{figure}
  \centering
  \includegraphics[scale=0.7]{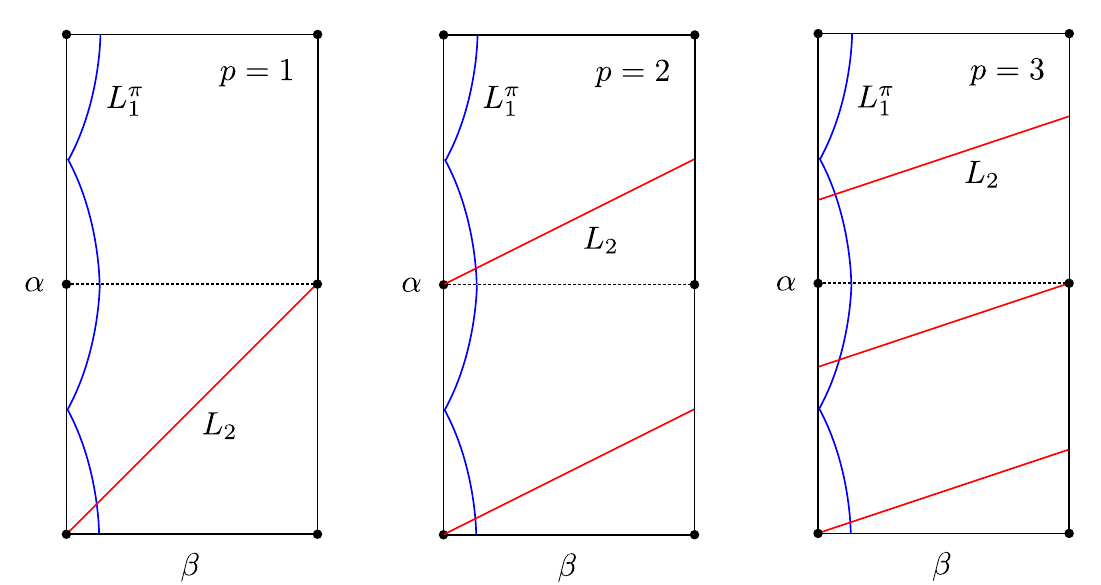}
  \caption{
    \label{fig:unknot-lens-space}
    The unknot in $L(p,1)$ for
    $p=1,2,3$.
    The space depicted is the pillowcase
    $P_3 \cap \{\gamma = 0\}$.
    Shown are the Lagrangians $L_1^\pi$ and $L_2$.
  }
\end{figure}

\subsection{Simple knot in $L(p,1)$ in homology class
$1 \in \Ints_p = H_1(L(p,1);\Ints)$}

\begin{definition}
A knot $K$ in a lens space $L(p,q)$ is said to be \emph{simple} if the
lens space has a Heegaard splitting into solid tori $U_1$ and $U_2$
with meridian disks $D_1$ and $D_2$ such that $D_1$ intersects $D_2$
in $p$ points and $K \cap U_i$ is an unknotted arc in disk $D_i$ for
$i=1,2$ (see \cite{Hedden-0}).
\end{definition}

One can show that there is exactly one simple knot in each nonzero
homology class of $H_1(L(p,q);\Ints) = \Ints_p$ \cite{Hedden-0}.
For the case $q=1$, we can view the lens space $L(p,1)$ is a circle
bundle over $S^2$, and a loop that winds $n$ times around a circle
fiber is a simple knot in homology class
$n \in \Ints_p = H_1(L(p,1);\Ints)$.
For $p \geq 2$, we can construct the simple knot $K$ in the lens space
$L(p,1)$ corresponding to the homology class
$1 \in \Ints_p = H_1(L(p,1);\Ints)$ by gluing the two handlebodies
together using the mapping class group element
$f = \alpha_1^{-1} T_a^p \in \MCG_2(T^2)$.
We first prove a result that constrains the possible intersection
points of $L_1^\pi$ and $L_2 = L_1 \cdot f$:

\begin{lemma}
\label{lemma:lens-simple}
If $L_1^\pi(\phi,\theta) = L_2(\chi,\psi)$ then $\phi = \pi/2$ and
$(\chi,\psi) \in \{(\chi_0,\psi_0), \cdots,
(\chi_{p-1},\psi_{p-1})\}$, where
$\chi_n := (n+1/2)(\pi/p)$ and
$\psi_n := (-1)^{n+1}(\pi/2 - \epsilon)$.
\end{lemma}

\begin{proof}
Define a function
$h_1:R(T^2,2) \cap \{\tr Ab \neq 0\} \rightarrow \Reals$ and functions
$h_2,h_3:R(T^2,2) \rightarrow \Reals$ by
\begin{align}
  \nonumber
  h_1([A,B,a,b]) &= -\frac{\tr Aa}{\tr Ab}, &
  h_2([A,B,a,b]) &= \tr Ba, &
  h_3([A,B,a,b]) &= \tr B.
\end{align}
Using straightforward calculations, one can show that if
$h_3(L_1^\pi(\phi,\theta)) = h_3(L_2(\chi,\psi))$ then
$(\tr Ab)(L_2(\chi,\psi)) \neq 0$, and thus the function $h_1$ is
defined everywhere on $L_1^\pi \cap L_2$.
We evaluate the functions $h_1$, $h_2$, and $h_3$ 
at the points $L_1^\pi(\phi,\theta)$ and $L_2(\chi,\phi)$.
If we require that each function give the same value at both points,
we obtain the desired result.
\end{proof}

The following is a restatement of Theorem \ref{theorem:intro-simple}
from the Introduction:

\begin{theorem}
If $K$ is the unique simple knot in the lens space $L(p,1)$
representing the homology class $1 \in \Ints_p = H_1(L(p,1);\Ints)$,
then the rank of $I^\natural(L(p,1),K)$ is at most $p$.
\end{theorem}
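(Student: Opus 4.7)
The plan is to follow closely the strategy used for the trefoil in Section \ref{sec:trefoil-s3}. By Lemma \ref{lemma:lens-simple}, the intersection $L_s \cap (L_d\cdot f)$ consists of at most $p$ points, one for each $n \in \{0,\ldots,p-1\}$ with $(\chi,\psi) = ((n+1/2)(\pi/p),\,(-1)^{n+1}(\pi/2 - \epsilon))$ and $\phi = \pi/2$. First I would check that each such $(\chi,\psi)$ in fact lies in $L_s$ by exhibiting a (necessarily unique) $\theta \in [0,2\pi)$ with $L_d(\chi,\psi)\cdot f = L_s(\pi/2,\theta)$; the $\theta$-coordinate is extracted from a trace function of the type described in Remark \ref{remark:trace-functions}, analogous to the determination of $\theta$ in Theorem \ref{theorem:R-nat-pi-immersion-1}.

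Next I would verify the hypotheses of Theorem \ref{theorem:intro-transverse} at each of these points. Because $\phi = \pi/2 \notin \{0,\pi\}$, none of the points is the double-point of $L_s$. Since $\chi \in (0,\pi)$ and $\cos\psi$ is small but nonzero, the representative coming from $R(S^1\times D^2, A_2)$ is nonabelian, and hence regular, so each point is the image of a regular point of $R(U_2,A_2) = L_d\cdot f$. The transversality check is the step that needs real work: for each of the $p$ candidate intersection points I would compute the partial derivatives of a suitable pair of trace functions from among $h_1,h_2,h_3$ in Lemma \ref{lemma:lens-simple} (supplemented if necessary by $\tr A$ or $\tr Ab$) with respect to $(\phi,\theta)$ along $L_s$ and $(\chi,\psi)$ along $L_d\cdot f$, and show that the resulting pair of differentials separates the two tangent planes. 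This mirrors the calculation producing equations (\ref{eqn:trefoil-trans-1})--(\ref{eqn:trefoil-trans-2}) in the trefoil proof.

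Once transversality is established at each intersection point, Corollary \ref{cor:glue} combined with Theorem \ref{theorem:intro-transverse} yields that the pullback map $R_\pi^\natural(Y,K) \to R(T^2,2)$ is injective with image $L_s \cap (L_d\cdot f)$ and that every point of $R_\pi^\natural(Y,K)$ is nondegenerate. Thus $R_\pi^\natural(Y,K)$ is a generating set of size at most $p$ for the singular instanton complex, giving the desired bound on the rank of singular instanton homology.

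The main obstacle I expect is the transversality computation. The $p$ intersection points differ from each other by the index $n$, which alters $\chi$ by multiples of $\pi/p$ and flips the sign of $\psi$; verifying that the Jacobian of the chosen trace functions is nondegenerate uniformly in $n$ (and for all sufficiently small $\epsilon > 0$) requires tracking these sign alternations carefully. Provided the Jacobian at a single representative $n$ is nondegenerate and the dependence on $n$ enters only through these controlled symmetries, the conclusion propagates to all $p$ points and completes the proof.
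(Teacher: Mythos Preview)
Your proposal is correct and follows essentially the same route as the paper: confirm that each of the $p$ candidates from Lemma \ref{lemma:lens-simple} is an actual intersection point by solving for $\theta$, check the hypotheses of Theorem \ref{theorem:intro-transverse}, and verify transversality using the trace functions $h_1,h_2,h_3$. The only cosmetic difference is that the paper locates $\theta$ by passing to the $(\hat a,\hat b)$-coordinates on $P_4$ and inverting an explicit diffeomorphism $\theta\mapsto\bar\theta$, rather than via a trace-function argument, and it uses all three of $h_1,h_2,h_3$ (together with Theorem \ref{theorem:immersion-R-nat-pi}) for the transversality check rather than a pair.
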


\begin{proof}
We will argue that each of the $p$ potential intersection points
described by Lemma \ref{lemma:lens-simple} is an actual intersection
point.
A calculation shows that
$L_2(\chi_n,\psi_n) = L_1(\chi_n,\psi_n) \cdot f = [A,B,a,b]$, where
\begin{align}
  \label{eqn:lens-simple-1}
  A &= \cos\chi_n + \sin\chi_n\,\qx, &
  B &= \cos\epsilon + \sin\epsilon\,\qz, \\
  \label{eqn:lens-simple-2}
  a &= (-1)^{n+1}(\cos\epsilon\,\qx + \sin\epsilon\,\qy), &
  b &=
  (-1)^n\cos\epsilon\,\qx +
  \sin\epsilon \cos \eta_n\,\qy +
  \sin\epsilon \sin \eta_n\,\qz,
\end{align}
and $\eta_n := (1 + n(p+2))(\pi/p)$.
We note that $A$ and $B$ do not commute,
since the coefficient of $\qx$ in $A$ and the coefficient of
$\qz$ in $B$ are both nonzero, so the intersection
$L_1^\pi \cap L_2$ takes place entirely in the piece $P_4$.
From equations (\ref{eqn:lens-simple-1}) and
(\ref{eqn:lens-simple-2}), we find that the
$(\hat{a},\hat{b})$ coordinates of $L_2(\chi_n,\psi_n)$ are given by
\begin{align}
  \label{eqn:lens-simple-d}
  \hat{a}(L_2(\chi_n,\psi_n)) &=
  (-1)^{n+1}(\cos\epsilon,\, \sin\epsilon,\,0), &
  \hat{b}(L_2(\chi_n,\psi_n)) &=
  ((-1)^n\cos\epsilon,\,
  \sin\epsilon \cos \eta_n,\,
  \sin\epsilon \sin \eta_n).
\end{align}
From Lemma  \ref{lemma:lens-simple}, we know that if
$L_1^\pi(\phi,\theta) = L_2(\chi,\psi)$ then $\phi = \pi/2$.
Substituting $\phi = \pi/2$ into the expressions for the
$(\hat{a},\hat{b})$ coordinates of $L_1^\pi(\phi,\theta)$ given in
Theorem \ref{theorem:L1}, we find that
\begin{align}
  \label{eqn:lens-simple-s1}
  \hat{a}(L_1^\pi(\pi/2,\theta)) &=
  (\cos\epsilon,\,\sin\epsilon,\,0), &
  \hat{b}(L_1^\pi(\pi/2,\theta)) &=
  (-\cos\epsilon,\,-\sin\epsilon \cos\bar{\theta},\,
  \sin\epsilon\sin\bar{\theta})
\end{align}
for $\theta \in (0,\pi)$, and
\begin{align}
  \label{eqn:lens-simple-s2}
  \hat{a}(L_1^\pi(\pi/2,\theta)) &=
  (-\cos\epsilon,\,-\sin\epsilon,\,0), &
  \hat{b}(L_1^\pi(\pi/2,\theta)) &=
  (\cos\epsilon,\,\sin\epsilon \cos\bar{\theta},\,
  \sin\epsilon\sin\bar{\theta})
\end{align}
for $\theta \in (\pi,2\pi)$, where $\bar{\theta}$ is defined such that
\begin{align}
  \nonumber
  \cos\bar{\theta} &=
  \frac{\cos^2\epsilon\cos^2\theta - \sin^2\theta}
       {\cos^2\epsilon\cos^2\theta + \sin^2\theta}, &
  \sin\bar{\theta} &=
  \frac{\cos\epsilon \sin 2\theta}
       {\cos^2\epsilon\cos^2\theta + \sin^2\theta}.
\end{align}
It is straightforward to verify that for small enough values of
$\epsilon$, the maps $(0,\pi) \rightarrow (0,2\pi)$,
$\theta \mapsto \bar{\theta}$ and
$(\pi,2\pi) \rightarrow (0,2\pi)$,
$\theta \mapsto \bar{\theta}$ are diffeomorphisms.
Thus we can always solve equations
(\ref{eqn:lens-simple-d})--(\ref{eqn:lens-simple-s2}) to obtain a
unique value of $\theta$ such that
$L_1^\pi(\pi/2,\theta) = L_2(\chi_n,\psi_n)$.
Specifically, if $n$ is even, then $\theta \in (0,\pi)$ is given by
$\bar{\theta}(\theta) = \eta_n$, and
if $n$ is odd then $\theta \in (\pi,2\pi)$ is given by
$\bar{\theta}(\theta) = \pi - \eta_n$.
We conclude that $L_1^\pi$ and $L_2$ intersect in $p$ points.

We will now show that $L_1^\pi$ intersects $L_2$ transversely at
each of these $p$ points.
A straightforward calculation shows that at each point of
$L_1^\pi \cap L_2$ we have
\begin{align}
  \nonumber
  \partial_\phi h_1(L_1^\pi(\phi,\theta)) &\neq 0, &
  \partial_\theta h_1(L_1^\pi(\phi,\theta)) &= 0, &
  \partial_\chi h_1(L_2(\chi,\psi)) &= 0, &
  \partial_\psi h_1(L_2(\chi,\psi)) &= 0, \\
  \nonumber
  \partial_\phi h_2(L_1^\pi(\phi,\theta)) &= 0, &
  \partial_\theta h_2(L_1^\pi(\phi,\theta)) &= 0, &
  \partial_\chi h_2(L_2(\chi,\psi)) &\neq 0, &
  \partial_\psi h_2(L_2(\chi,\psi)) &= 0, \\
  \nonumber
  \partial_\phi h_3(L_1^\pi(\phi,\theta)) &= 0, &
  \partial_\theta h_3(L_1^\pi(\phi,\theta)) &= 0, &
  \partial_\chi h_3(L_2(\chi,\psi)) &= 0, &
  \partial_\psi h_3(L_2(\chi,\psi)) &\neq 0.
\end{align}
These equations, together with Theorem \ref{theorem:L1}, show that the
intersection is transverse at each point.
\end{proof}

For the case $p=0$, the knot we have constructed is
$K = S^1 \times \{pt\}$ in $S^1 \times S^2$, and our above result
implies that this knot has a generating set with zero generators.
This result holds even in the absence of the perturbation, since there
are no homomorphisms
$\rho:\pi_1(S^1 \times S^2 - K) \rightarrow SU(2)$ that take loops
around $K$ to traceless matrices.

For the case $p=1$, the knot we have constructed is the unknot in
$S^3$, and we have have reproduced the result of Section
\ref{sec:unknot} for this knot.

\begin{remark}
It is interesting to note that for a simple knot $K$ in the lens space
$Y = L(p,q)$, the knot Floer homology $\widehat{HFK}(Y,K)$ has rank
$p$ (see \cite{Hedden-0}).
\end{remark}

\section*{Acknowledgments}

The author would like to express his gratitude towards Ciprian
Manolescu for providing invaluable guidance.
The author was partially supported by NSF grant number
DMS-1708320.


\begin{thebibliography}{8}

\bibitem{Abouzaid}
  M.~Abouzaid and C.~Manolescu,
  \emph{A sheaf-theoretic model for $SL(2,\Complex)$ Floer homology},
  preprint,
  \textbf{arXiv:1708.00289}.

\bibitem{Bellingeri}
  P.~Bellingeri,
  \emph{On presentations of surface braid groups},
  J. Algebra,
  \textbf{274}(2004), no. 2, 543--563.
  
\bibitem{Birman}
  J.~Birman,
  \emph{Mapping class groups and their relationship to braid groups},
  Comm. Pure Appl. Math.,
  \textbf{22}(1969), 213--238.

\bibitem{Boozer}
  D.~Boozer,
  \emph{Hecke modifications for rational and elliptic curves},
  Algebr. Geom. Topol.,
  \textbf{21}(2021), 543--600.

\bibitem{Boozer-2}
  D.~Boozer,
  \emph{Khovanov homology via 1-tangle diagrams in the annulus},
  preprint,
  \textbf{arXiv:2102.10748}.

\bibitem{Cattabriga}
  A.~Cattabriga and M.~Mulazzani,
  \emph{$(1,1)$-knots via the mapping class group of the twice
    punctured torus},
  Adv. in Geom.,
  \textbf{4}(2004), 263--277.

\bibitem{Donaldson}
  S.~Donaldson,
  \emph{Floer Homology Groups in Yang-Mills Theory},
  Cambridge University Press,
  New York,
  2004.

\bibitem{Farb}
  B. Farb and D. Margalit,
  \emph{A Primer on Mapping Class Groups},
  Princeton University Press,
  2012.

\bibitem{Gervais}
  S.~Gervais,
  \emph{A finite presentation of the mapping class group of a
    punctured surface},
  Topology,
  \textbf{40}(2001), 703--725.

\bibitem{Goda}
  H.~Goda, H.~Matsuda, T.~Morifuji,
  \emph{Knot Floer homology of $(1, 1)$-knots},
  Geom. Dedic.,
  \textbf{112}(2005), 197--214.

\bibitem{Hamstrom}
  M-E.~Hamstrom,
  \emph{The space of homeomorphisms on a torus},
  Illinois J. Math.,
  \textbf{9}(1965), 59--65.

\bibitem{Hedden-0}
  M.~Hedden,
  \emph{On Floer homology and the Berge conjecture on knots admitting
    lens space surgeries},
  Tran.s Amer. Math. Soc.,
  \textbf{363}(2011), 949--968.

\bibitem{Hedden-1}
  M.~Hedden, C.~Herald, and P.~Kirk,
  \emph{The Pillowcase and Perturbations of Traceless Representations
    of Knot Groups},
  Geom. Topol.,
  \textbf{18}(2014), no. 1, 211--287.

\bibitem{Hedden-2}
  M.~Hedden, C.~Herald, and P.~Kirk,
  \emph{The pillowcase and traceless representations of knot groups
    II: a Lagrangian-Floer theory in the pillowcase},
  preprint,
  \textbf{arXiv:1501.00028v1}.
  
\bibitem{Hedden-3}
  M.~Hedden, C.~Herald, M.~Hogancamp, and P.~Kirk,  
  \emph{The Fukaya category of the pillowcase, traceless character
  varieties, and Khovanov cohomology},
  Trans. Amer. Math. Soc.,
  \textbf{373}(2020), 8391--8437.

\bibitem{Kronheimer-0}
  P.~Kronheimer and T.~Mrowka,
  \emph{Knots, sutures, and excision},
  J. Differential Geom.,
  \textbf{84}(2010), 301--364.

\bibitem{Kronheimer-1}
  P.~Kronheimer and T.~Mrowka,
  \emph{Knot homology groups from instantons},
  J. Topol.,
  \textbf{4}(2011), 835--918.

\bibitem{Kronheimer-2}
  P.~Kronheimer and T.~Mrowka,
  \emph{Khovanov homology is an unknot-detector},
  Publ. Math. Inst. Hautes \'{E}tudes Sci.,
  \textbf{113}(2011), 97--208.

\bibitem{Kronheimer-3}
  P.~Kronheimer and T.~Mrowka,
  \emph{Filtrations on instanton homology},
  Quantum Topol.,
  \textbf{5}(2014) 61--97.

\bibitem{Labruere}
  C.~Labru\'{e}re and L.~Paris,
  \emph{Presentations for the punctured mapping class groups in terms
    of Artin groups},
  Algeb. Geom. Topol.,
  \textbf{1}(2001), 73--114.

\bibitem{Vargas}
  N.~F.~Vargas,
  \emph{Geometry of the moduli of parabolic bundles on elliptic
    curves},
  preprint,
  \textbf{arXiv:1611.05417}.

\bibitem{Weil}
  A.~Weil,
  \emph{Remarks on the cohomology of groups},
  Ann. of Math.,
  \textbf{2}(1964) 149--157.

\end{thebibliography}
\end{document}